\newtheorem{Theo}{Theorem}[section]
\newtheorem{Prop}[Theo]{Proposition}
\newtheorem{Coro}[Theo]{Corollary}
\newtheorem{Lemm}[Theo]{Lemma}
\newtheorem{Obse}[Theo]{Observation}
\newtheorem{Rema}[Theo]{Remark}
\newcommand{\IDT}{\mathbb{T}^{\N}}
\newcommand{\rto}{\rightarrow}
\newcommand{\N}{\mathbb{N}}
\newcommand{\SSS}{\mathcal{S}}
\newcommand{\R}{\mathbb{R}}
\newcommand{\BB}{\mathcal{B}}
\newcommand{\primes}{\mathfrak{p}}
\newcommand{\spn}{\operatorname{span}}
\newcommand{\Real}{\operatorname{Re}}
\title{\bf Hardy spaces of vector-valued Dirichlet series}
\author{A. Defant}
\address{Institut f\"{u}r Mathematik. Universit\"{a}t Oldenburg. D-26111 Oldenburg (Germany).} \email{defant@mathematik.uni-oldenburg.de}
\author{A. P\'{e}rez}
\address{Departamento de Matem\'{a}ticas, Universidad de Murcia, Espinardo. 30100 Murcia (Spain).} \email{antonio.perez7@um.es}
\thanks{ The research of the second author was partially done during a stay in Oldenburg (Germany) under the support of a PhD fellowship of ``La Caixa Foundation'', and of the projects of MINECO/FEDER (MTM2014-57838-C2-1-P) and Fundación S\'{e}neca - Regi\'{o}n de Murcia (19368/PI/14).
}
\subjclass[2010]{30B50, 32A35, 32A70, 46G20, 46B22}
\keywords{Dirichlet series, Analytic Radon-Nikod\'{y}m Property, Bohr transform, summing operators}
\begin{document}

\begin{abstract}
Given a Banach space $X$   and $1 \leq p \leq \infty$, it is well known that the two Hardy spaces
$H_p(\mathbb{T},X)$ ($\mathbb{T}$ the torus) and $H_p(\mathbb{D},X)$ ($\mathbb{D}$ the disk) have to be distinguished carefully. This motivates us to
define and study
 two different types of Hardy spaces
$\mathcal{H}_p(X)$ and $\mathcal{H}^+_p(X)$ of Dirichlet series $\sum_n a_n n^{-s}$ with
coefficients in $X$. We
characterize them in  terms of summing operators as well as holomorphic functions in infinitely many variables, and prove that they coincide whenever $X$ has the analytic Radon-Nikod\'{y}m Property.
Consequences are, among others, a vector-valued version of the Brother's Riesz Theorem in the infinite-dimensional torus, and  an answer to the question when $\mathcal{H}_1(X^{\ast})$ is a dual space.

\end{abstract}

\maketitle

\section{Introduction}

Recent  years have seen a remarkable growth of interest in certain functional
analytic aspects of the theory of ordinary Dirichlet series $\sum_n a_n n^{-s}$.  We refer to the survey \cite{Hedenmalm} and the monograph \cite{QplusQ}, and highlight the articles \cite{Bayart1}  and \cite{HardySpaceDirich}. The aim of the present paper is to study properties of Hardy spaces of vector-valued Dirichlet series, this is, Dirichlet series with coefficients in a (complex) Banach space $X$. As  a sort of by-product, we will see that our vector-valued  point of view even  allows to obtain more information on  scalar-valued Dirichlet series.
But  before we explain more carefully what we intend to do, let us introduce some preliminaries.

Bohr established a formal one-to-one correspondence between Dirichlet series and formal power series in infinitely many variables with coefficients in $\mathbb{C}$.
This correspondence in a straight forward way extends to the vector-valued situation. Given a Banach space $X$, the  map, to which we will refer as \emph{Bohr transform}, is formally defined as
\[ \BB: \sum_{\alpha \in \mathbb{N}_{0}^{(\mathbb{N})}}{c_{\alpha} z^{\alpha}} \longleftrightarrow \sum_{n \in \N}{a_{n} n^{-s}}, \mbox{ where $a_{n}=c_{\alpha} \in X$ if $n=\primes^{\alpha}$}. \]
Here $\primes$ denotes the ordered sequence of prime numbers $\mathfrak{p}_{1} < \mathfrak{p}_{2} < \ldots$, the symbol $\N_{0}^{(\N)}$ stands for the set of all sequences $\alpha$ in $\N_{0}:=\N \cup \{0\}$ that are eventually null, and $z^{\alpha} := z_{1}^{\alpha_{1}} z_{2}^{\alpha_{2}} \cdot \ldots$ for any given sequence $z =(z_n)_{n \in \N}$ of scalars.

Consider the infinite-dimensional torus $\IDT$ with the Haar measure $d \omega$ (i.e. the normalized Lebesgue measure), and let $L_{p}(\IDT,X)$ be the space of $p$-Bochner integrable functions for $1 \leq p < \infty$ or the space of essentially-bounded measurable functions if $p=\infty$.  We know that every $f \in L_{p}(\IDT,X)$ is uniquely determined by its Fourier series
\[ f \equiv \sum_{\alpha \in \mathbb{Z}^{(\N)}}{\widehat{f}(\alpha) \omega^{\alpha}} \hspace{2mm} \mbox{ with } \hspace{2mm} \widehat{f}(\alpha) = \int_{\IDT}{f(\omega) \omega^{-\alpha} \: d \omega}\,, \]
 where $\mathbb{Z}^{(\N)}$ stands for the set of all sequences of integers that are eventually zero. We denote by $H_{p}(\IDT, X)$ the closed subspace of $L_{p}(\IDT, X)$ consisting of those functions $f$ for which $\widehat{f}(\alpha) = 0$ whenever $\alpha \in \mathbb{Z}^{(\N)} \setminus \N_{0}^{(\N)}$.

 We define $\mathcal{H}_p(X)$ as the image of $H_p(\IDT,X)$ through Bohr's transform,
  \[
  \mathcal{H}_p(X) \equiv H_p(\IDT,X)
  \]
    endowed with the norm that makes this identification an onto isometry between both spaces. It follows from this definition some features like the fact that if a sequence of Dirichlet series $(D^{N})_{N}$ converges in $\mathcal{H}_{p}(X)$ to some $D$, then the sequence coefficients $a_{n}(D^{N})$ converges to $a_{n}(D)$ for each $n \in \N$; or the denseness of the set of all Dirichlet polynomials $D = \sum_{n=1}^{N}{a_{n} n^{-s}}$, $N \in \N$ in $\mathcal{H}_{p}(X)$ for $1 \leq p < + \infty$, as so do the set of trigonometric polynomials in $H_{p}(\IDT,X)$. Moreover, the $\mathcal{H}_{p}(X)$-norm of such Dirichlet polynomials $D$ is given by the following two
intrinsic  formulas (see e.g. \cite{QplusQ}): For $1 \leq p < \infty$ we have
\begin{align}
\label{EQUA:pNormSeries}
\begin{split}
\| D\|_{\mathcal{H}_{p}(X)}
&
=
 \Big(\int_{\mathbb{T}^{\mathbb{N}}} \Big\|\sum_{\alpha:1 \leq p^\alpha \leq N} a_{p^\alpha} \omega^\alpha  \Big\|_X^p d \omega\Big)^{1/p}
 \\[1ex]&
=
\Big(\lim_{R\rightarrow \infty}{\frac{1}{2R}{ \int_{-R}^{R}{\Big\| \sum_{n=1}^{N}{a_{n} \frac{1}{n^{it} }}\Big\|_X^{p} \: d t}}}\Big)^{\frac{1}{p}},
\end{split}
\end{align}
and for $p=\infty$
\begin{align}
\label{EQUA:supremumNormSeries}
\begin{split}
\| D\|_{\mathcal{H}_{\infty}(X)}
&
=  \sup_{z \in \mathbb{T}^{\mathbb{N}}} \Big\|\sum_{\alpha:1 \leq p^\alpha \leq N} a_{p^\alpha} z^\alpha  \Big\|_X
=  \sup_{z \in \mathbb{D}^{\mathbb{N}}} \Big\|\sum_{\alpha:1 \leq p^\alpha \leq N} a_{p^\alpha} z^\alpha  \Big\|_X
\\[1ex]
&
= \sup_{t \in \mathbb{R}}{\Big\| \sum_{n=1}^{N}{a_{n} \frac{1}{n^{it} }}\Big\|_X}
= \sup_{\Real{(s)} \geq 0}{\Big\| \sum_{n=1}^{N}{a_{n} \frac{1}{n^{s} }}\Big\|_X}
\,.
\end{split}
\end{align}

 The  Banach space $H_p(\IDT,X)$ shares many features with the classical Hardy space $H_{p}(\mathbb{T},X)$ (defined similarly) as we point out in section \ref{sec:spaceProperties}.
 Given $1 \leq p < \infty$,
the Banach space $H_p(\mathbb{D},X)$   consists of all holomorphic functions $F:\mathbb{D} \rightarrow X$ such that
\[ \| F\|_{H_p(\mathbb{D},X)} :=\sup_{0 < r < 1}{\Big(\int_{\mathbb{T}}{\| F(r \omega) \|_X^{p} \: d \omega}\Big)^{p}} < \infty. \]
Recall that for  $p = \infty$ the Hardy space $H_\infty(\mathbb{D},X)$ is the Banach space of all bounded and holomorphic functions $F:\mathbb{D} \rightarrow X$ together with the supremum norm.
It is well-known that $H_p(\mathbb{D},X)$  contains $H_p(\mathbb{T},X)$ isometrically  by means of the Poison Kernel $K$: given $f \in H_p(\mathbb{T},X)$ the function
\begin{equation}
\label{equa:PoissonRepresentation}
F(z) = \int_{\mathbb{T}}{f(\omega) K(\omega, z) \: d \omega} \: \mbox{ where } \: K(\omega, z) := \frac{|\omega|^{2} - |z|^2}{|\omega - z|^2},
\end{equation}
belongs to $H_p(\mathbb{D},X)$ and $\|F \|_{H_p(\mathbb{D},X)} = \| f\|_{H_p(\mathbb{T},X)}$. In the scalar case this leads to an onto isometry --  however in the vector-valued case $H_p(\mathbb{D},X)$ and $H_p(\mathbb{T},X)$ are in general different
(see also section \ref{sect:ARNP}).

\noindent For each $\sigma \in \mathbb{R}$ denote
$$\mathbb{C}_\sigma:= \{ s \in \mathbb{C} \colon \Real{(s)} > \sigma \}\,.$$
Following the previous philosophy we define $\mathfrak{D}_{\infty}(X)$ as the space of all Dirichlet series $D=\sum_n a_n n^{-s}$ with coefficients $a_n \in X$ which converge on the half space $\mathbb{C}_0$ and define a bounded and (then automatically) holomorphic function $D(s)$ on that half-space. The next result of Bohr is fundamental for everything following (details of its proof are given in Remark \ref{Rema:proofBiebersau}).

\begin{Theo} \label{Biebersau}
 Every Dirichlet series $\sum_n a_n n^{-s}$  in a Banach space $X$ which has a limit function $D(s)$ extending to a bounded and holomorphic functions on $\mathbb{C}_0$, converges uniformly on all half spaces $\mathbb{C}_\varepsilon$ where $\varepsilon >0$\,.
\end{Theo}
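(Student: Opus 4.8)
The plan is to run Bohr's classical Perron-formula argument and check that it goes through verbatim when the coefficients lie in $X$: the only tools used are Perron's formula, contour shifts and the residue theorem for $X$-valued holomorphic functions, together with elementary estimates in the norm $\|\cdot\|_X$, and since all integrals are Bochner integrals none of these steps distinguishes scalars from vectors. Put $M:=\sup_{s\in\mathbb{C}_{0}}\|D(s)\|_{X}$. Since the series converges somewhere, its abscissa of absolute convergence $\sigma_a$ is finite; fix $c>\sigma_a$, so that for every $s$ with $\Real(s)>0$ the series $\sum_{n}a_{n}n^{-(s+w)}$ converges absolutely on the line $\Real(w)=c$.

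The first step is to show that the first Riesz means of $D$ converge uniformly on every half-plane $\mathbb{C}_\varepsilon$, and to squeeze out of this a growth bound for the coefficients. By Perron's formula — the interchange of sum and integral being licit by absolute convergence — one has, for $x>1$,
\[
\sum_{n\le x}\Big(1-\tfrac nx\Big)a_{n}n^{-s}=\frac{1}{2\pi i}\int_{\Real(w)=c}D(s+w)\,\frac{x^{w}}{w(w+1)}\,dw .
\]
Fix $\varepsilon>0$ and $s$ with $\Real(s)\ge\varepsilon$, and shift the contour to $\Real(w)=-\varepsilon/2$: this is legitimate because on the strip $-\varepsilon/2\le\Real(w)\le c$ one has $\Real(s+w)\ge\varepsilon/2>0$, hence $\|D(s+w)\|_{X}\le M$ there, while the horizontal segments vanish since the kernel decays like $|w|^{-2}$. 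Crossing the simple pole at $w=0$ we pick up the residue $D(s)$ and are left with an integral of norm at most $C_\varepsilon M x^{-\varepsilon/2}$; thus
\[
\sup_{\Real(s)\ge\varepsilon}\Big\|\sum_{n\le x}\Big(1-\tfrac nx\Big)a_{n}n^{-s}-D(s)\Big\|_{X}\le C_\varepsilon\,M\,x^{-\varepsilon/2}\longrightarrow0\quad(x\to\infty).
\]
Consequently the Dirichlet polynomial on the left is bounded on the vertical line $\Real(s)=\varepsilon$ by $(1+C_\varepsilon)M$, uniformly in $x$; extracting its $n$-th coefficient by the usual mean-value identity and letting $x\to\infty$ gives $\|a_{n}\|_{X}\le (1+C_\varepsilon)M\,n^{\varepsilon}$ for every $\varepsilon>0$, i.e. the coefficients grow subpolynomially. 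This is the real point of the proof.

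The second step treats the genuine partial sums. Fix $\varepsilon>0$, put $x=N+\tfrac12$ (so that $\sum_{n\le x}a_{n}n^{-s}=\sum_{n=1}^{N}a_{n}n^{-s}$), and use Perron's formula with the sharp cut-off, i.e. with the non-integrable kernel $x^{w}/w$, in its truncated form
\[
\sum_{n=1}^{N}a_{n}n^{-s}=\frac{1}{2\pi i}\int_{c-iT}^{\,c+iT}D(s+w)\,\frac{x^{w}}{w}\,dw+R(s,T),
\]
where $R(s,T)$ is the standard error term, a series in $n$ of terms comparable to $\|a_{n}\|_{X}\,n^{-\Real(s)-c}x^{c}\min\bigl(1,\,1/(T|\log(x/n)|)\bigr)$. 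Shifting the main integral to $\Real(w)=-\varepsilon/2$ again produces the residue $D(s)$, horizontal pieces of norm $\le CMx^{c}/T$, and a vertical integral of norm $\le CMx^{-\varepsilon/2}\log(T/\varepsilon)$ — the logarithm being the price of the kernel $1/|w|$, only logarithmically integrable along a segment of height $2T$. Splitting $R(s,T)$ according to $|n-x|>x/4$ or $|n-x|\le x/4$, the first part is $\le (C/T)x^{c}\sum_{n}\|a_{n}\|_{X}n^{-\Real(s)-c}$ and in the second the only delicate term is the nearest integer, of size comparable to $\|a_{N}\|_{X}N^{-\Real(s)}$; it is precisely here that the subpolynomial bound $\|a_{n}\|_{X}=O(n^{\varepsilon/8})$ from the first step makes the whole error tend to $0$. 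Choosing $T$ a suitable power of $N$ (say $T=x^{1+\varepsilon/2}$) one arrives at
\[
\sup_{\Real(s)\ge\varepsilon}\Big\|\sum_{n=1}^{N}a_{n}n^{-s}-D(s)\Big\|_{X}\le C'_\varepsilon\,M\,\frac{\log N}{N^{\varepsilon/8}}\longrightarrow0\quad(N\to\infty),
\]
which is uniform convergence on $\mathbb{C}_\varepsilon$; as $\varepsilon>0$ was arbitrary, the theorem follows. The step I expect to be the main obstacle is exactly this passage from the smoothed (Riesz) means to the honest partial sums: the sharp cut-off forces the non-absolutely-convergent kernel $1/w$ and with it the nearest-integer term of Perron's formula, and only the a priori subpolynomial growth of $\|a_{n}\|_{X}$ — itself extracted from the Riesz-mean estimate — prevents that term, and hence the whole error, from blowing up as $\Real(s)$ decreases to $\varepsilon$. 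Everything else (the contour manipulations, the Bochner-integral forms of Cauchy's and the residue theorems, the routine splitting of $R(s,T)$, the final optimisation in $T$) is bookkeeping.
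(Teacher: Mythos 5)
Your proof is correct, and it is the classical Perron--formula proof of Bohr's theorem carried over to the Bochner setting; the paper, however, organizes things differently. It never redoes the sharp-cutoff analysis for the honest partial sums: it first records the quantitative bound $\big\| \sum_{n \leq x} a_n n^{-s} \big\|_{\mathfrak{D}_{\infty}(X)} \leq C \log x \, \| D \|_{\mathfrak{D}_{\infty}(X)}$ of Theorem \ref{old} (whose proof is taken over verbatim from the scalar case in \cite{Queffelec1}, \cite{QplusQ} --- morally your Step 1 together with a contour argument of the same kind as your Step 2), and then, as indicated in Remark \ref{Rema:proofBiebersau}, deduces uniform convergence on $\mathbb{C}_\varepsilon$ by the Abel-summation computation of Corollary \ref{Coro:HpUniformAbcissa}: summing $\sum_{n=N}^{M} a_n n^{-\varepsilon-s}$ by parts against the weights $n^{-\varepsilon}-(n+1)^{-\varepsilon}$, the $\log$-bound makes the tails Cauchy in the uniform norm on $\mathbb{C}_0$. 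That soft step replaces exactly the part you single out as delicate --- the non-integrable kernel $1/w$, the truncation error and the nearest-integer term --- at the price of being non-quantitative, while your direct route is self-contained and yields an explicit rate $O_\varepsilon\big(M \log N \, N^{-\varepsilon/8}\big)$. Two pieces of bookkeeping you should make explicit: take $c > \max(0,\sigma_a)$ (for $c \leq 0$ the kernel identity behind Perron's formula fails), and in Step 2, once the subpolynomial bound $\|a_n\|_X = O(n^{\varepsilon/8})$ is available, re-choose $c$ close to $1$ (say $c = 1 + \varepsilon/4$), since with the $c$ fixed at the outset the contributions of size $x^{c}/T$ need not tend to zero for $T = x^{1+\varepsilon/2}$. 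Neither point affects the correctness of your scheme.
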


 Based on this result it is quite simple to show that  $\mathfrak{D}_{\infty}(X)$
 together with the supremum norm
 $$\| D\|_{\mathfrak{D}_{\infty}(X)} := \sup_{s \in \mathbb{C}_{0}}{\| D(s)\|_X}$$ forms a Banach space. It is by now well-known that $$\mathcal{H}_\infty(\mathbb{C}) = \mathfrak{D}_{\infty}(\mathbb{C})$$
 (see e.g. \cite{HardySpaceDirich} or \cite{QplusQ} ).
 However, in the case $1 \leq p < \infty$ a  Dirichlet series $D \in \mathcal{H}_{p}(\mathbb{C})$ in general only defines a holomorphic function on $\mathbb{C}_{1/2}$ (see e.g. \cite{Bayart1}).

  How to overcome this ``gap'' between $1 \leq p < \infty$ and $p =\infty$ at least partly? We in a first step consider  for every Dirichlet series $D = \sum_{n}{a_{n} n^{-s}}$
   with coefficients in $X$ its translation by some $z \in \mathbb{C}$:
\[
D_z:= \sum_n \frac{a_n}{n^z} n^{-s}\,.
\]
And then in a second step,  we  define for  $1 \leq p \leq  \infty$ the Banach space
      $$\mathcal{H}^{+}_{p}(X)$$
      of all Dirichlet series $D$ in $X$ such that  $D_\varepsilon \in \mathcal{H}_p(X)$ for each $\varepsilon>0$ and   moreover
    \begin{align*}\label{dp}
 \|D\|_{\mathcal{H}^{+}_{p}(X)} := \sup_{\varepsilon > 0}{\|D_{\varepsilon}\|_{\mathcal{H}_{p}(X)}} < \infty;
 \end{align*}
in contrast to $\mathfrak{D}_{\infty}(X)$, the completeness of $\mathcal{H}^{+}_{p}(X)$ is obvious.
Note first that
\begin{equation} \label{note}
\mathfrak{D}_\infty(X) \subset \mathcal{H}^{+}_{\infty}(X)
\end{equation}
and that the inclusion is even an isometry; indeed, if $D \in \mathfrak{D}_\infty(X)$ and $\varepsilon >0$, then by Bohr's fundamental Theorem \ref{Biebersau}
the sequence $D^N_\varepsilon = \sum_{n \leq N} \frac{a_n}{n^\varepsilon } n^{-s}\,, N \in \mathbb{N}$ converges to $D_\varepsilon$ uniformly on $\mathbb{C}_0$. But  by \eqref{EQUA:supremumNormSeries} we have
\begin{align*}
\| D^N_\varepsilon\|_{\mathfrak{D}_{\infty}(X)}
&
= \sup_{t \in \mathbb{R}} \sup_{\delta >0}{\Big\| \sum_{n=1}^{N}{\frac{a_{n}}{n^\varepsilon} \frac{1}{n^{\delta+it} }}\Big\|_X}
\\
&
= \sup_{t \in \mathbb{R}}{\Big\| \sum_{n=1}^{N}{\frac{a_{n}}{n^\varepsilon} \frac{1}{n^{it} }}\Big\|_X}
 = \|D^N_\varepsilon\|_{\mathcal{H}_{\infty}(X)}\,.
\end{align*}
Hence the Cauchy sequence $(D^N_\varepsilon)_N$ in $\mathcal{H}^+_\infty(X)$ converges to some Dirichlet series which by the uniqueness of the coefficients
must be $D_\varepsilon$. Moreover, this implies that
$\| D_\varepsilon\|_{\mathfrak{D}_{\infty}(X)} =  \|D_\varepsilon\|_{\mathcal{H}_{\infty}(X)}$\,.
This gives that  $D \in \mathcal{H}^+_\infty(X)$ and $\| D\|_{\mathfrak{D}_{\infty}(X)}  = \|D\|_{\mathcal{H}^+_{\infty}(X)}$
\,. \qed\\

It is a bit more challenging to show that even
\[
\mathcal{H}^{+}_{\infty}(X) = \mathfrak{D}_\infty(X)\,,
\]
and  that for all $1 \leq p \leq \infty$  the  following  inclusions
\begin{equation*}\label{HpinDp }
  \mathcal{H}_{p}(X) \hookrightarrow \mathcal{H}^{+}_{p}(X)\,,
\end{equation*}
hold true and are isometric (see Corollary \ref{infty=infty} and Theorem \ref{Rema1}).
In the scalar case we will see  that even
 \begin{equation*}\label{scalarcase}
  \mathcal{H}_{p}(\mathbb{C})\equiv \mathcal{H}^{+}_{p}(\mathbb{C})\,,
\end{equation*}
however in the vector-valued case it will turn out that this in general is not true which  motivates the following two questions:

\vspace{1mm}
\noindent \textbf{Q1.} \emph{
For which Banach spaces $X$ do we have that $\mathcal{H}_p(X)$ and $\mathcal{H}^{+}_p(X)$ coincide?
}\\[-3.5mm]
%\vspace{1mm}

In section \ref{sect:ARNP} we prove that $\mathcal{H}_p(X)$ and $\mathcal{H}^{+}_p(X)$ coincide for every
$1 \leq p \leq \infty$ if and only if they coincide for some $1 \leq p \leq \infty$, which in turn is equivalent to the fact that the Banach space $X$ has the so-called analytic Radon-Nikod\'{y}m Property, see Theorem \ref{Theo:ARNPimpliesOnto}. The proof relies on a representation result given in Lemma \ref{Prop:representationHolomorphic}, and a theorem due to Dowling \cite{DowlingARNPLebesgue}.

\vspace{1mm}
\noindent \textbf{Q2.} \emph{
Can we, for an arbitrary Banach space $X$, identify $\mathcal{H}^{+}_p(X)$ with some ``better-known'' Banach space ``bigger'' than $\mathcal{H}_p(X)$?
}\\[-3.5mm]
%\vspace{1mm}

We deal with this question in section \ref{sect:operatorSpaces}. The answer is inspired on results of Blasco \cite{BlascoPositivepSumming}, \cite{BlascoBoundaryValues}.  As usual, we denote the space of all bounded and linear operators between two Banach spaces $E$ and $X$ by $\mathcal{L}(E,X)$ endowed with the operator norm $\| \cdot \|$.
The following notion (see \cite[Chapter IV, Section \S 3]{ScaheferBanachLattice}, \cite[Definition 1, p. 275]{BlascoPositivepSumming}) is crucial for our purposes:\\

\vspace{-1mm}

 \noindent  An operator $T \in \mathcal{L}(E,X)$, where $E$ is a Banach lattice, is said to be \emph{cone absolutely summing} if there is a constant $C > 0$ such that for every choice of finitely many positive elements $f_{1}, \ldots, f_{n} \in E$, we have that
\begin{equation*}
\label{equa:positive1SummingOperatorsCondition}
\sum_{i=1}^{n}{\| Tf_{i}\|} \leq C \: \sup_{\xi^\ast \in B_{E^\ast}}{ \sum_{i=1}^{n}{|\xi^\ast f_i|}}
\end{equation*}
(here $E^\ast$ denotes the dual of $E$ and $B_{E^\ast}$ its unit ball).
We denote by $\Lambda(E,X)$ the space of all cone absolutely summing operators and endow it with the norm $\| \cdot \|_{\Lambda}$ which is the infimum of all constants $C > 0$ satisfying \eqref{equa:positive1SummingOperatorsCondition}. It turns out that this is indeed a Banach space with $\| \cdot \| \leq \| \cdot \|_{\Lambda}$.\\

 Denote by $1 \leq p^\ast \leq \infty$ the conjugate of $p$, i.e. $1/p + 1/p^\ast = 1$ with the convention $1/\infty := 0$. Let us agree to write
 $$  E_{p}(\Omega,X):= L_{p}(\mu, X )\,,$$
 when $(\Omega, \Sigma,\mu)$ is a probablity space and $1\leq p < + \infty$, and
$$E_{\infty}(\Omega, X):= C(\Omega, X)\,,$$
whenever $\Omega$ is a compact Hausdorff space.
 Endowed with their natural norms both spaces form Banach spaces. If $X = \mathbb{C}$, we simply write
 \[ E_{p}(\Omega) := E_{p}(\Omega, \mathbb{C}).\]
 The following isometric equalities hold (see  \cite{BlascoPositivepSumming}):
 \begin{equation}
 \begin{split} \label{equa:BlascoRepresentation}
 &
 \big(\Lambda(E_{1}(\Omega),X)\| \cdot \|_{\Lambda}\big) = \big(\mathcal{L}(E_{1}(\Omega),X),\| \cdot \|\big)
 \\
 &
 \big(\Lambda(E_{\infty}(\Omega),X)\| \cdot \|_{\Lambda}\big) =\big(\Pi_{1}(E_{\infty}(\Omega),X), \pi_1(\cdot)\big)\,,
 \end{split}
 \end{equation}
 where $\pi_1(\cdot)$ as usual stands for the $1$-summing norm.

  \noindent Notice that the set of trigonometric polynomials is dense in $E_{p^\ast}(\IDT)$ for every $1 \leq p^\ast \leq \infty$, so each $T \in \mathcal{L}(E_{p^\ast}(\IDT),X)$ is uniquely determined by its ``Fourier coefficients'' $T(\omega^{-\alpha})$ for $\alpha \in \mathbb{Z}^{(\N)}$. We denote by $\mathcal{L}^{+}(E_{p^\ast}(\IDT),X)$ the subset of all operators $T \in \mathcal{L}(E_{p^\ast}(\IDT),X)$ with $T(\omega^{-\alpha})=0$ whenever $\alpha \in \mathbb{Z}^{(\N)} \setminus \N_{0}^{(\N)}$. In this sense, each $T \in L^{+}(E_{p^\ast}(\IDT),X)$ is uniquely given  by its Fourier series
\begin{equation}
\label{equa:FourierSeriesOperator}
\sum_{\alpha \in \N_{0}^{(\N)}}{T(\omega^{-\alpha})} z^{\alpha}.
\end{equation}
Define $$\Lambda^+\left(E_{p^\ast}(\IDT),X\right) := \Lambda\left(E_{p^\ast}(\IDT),X\right) \cap \mathcal{L}^+\left(E_{p^\ast}(\IDT),X\right)\,,$$
 which is clearly a closed subspace of \mbox{$\left(\Lambda(E_{p^\ast}(\IDT),X), \| \cdot \|_{\Lambda}\right)$}. The main result of section \ref{sect:operatorSpaces} is Theorem \ref{Theo:DirichletOperators} which states that the Bohr transform $ \mathcal{B}$ establishes an onto isometry:
\[
\left(\Lambda^+\left(E_{p^\ast}(\IDT),X\right), \| \cdot \|_{\Lambda} \right)  \equiv \left(\mathcal{H}^{+}_{p}(X), \| \cdot \|_{\mathcal{H}^{+}_{p}(X)}\right).
 \]
In section \ref{sec:holomorphic} we characterize  the Hardy spaces $\mathcal{H}^{+}_{p}(X)$ of Dirichlet series  in terms of certain spaces of holomorphic functions on  certain Reinhardt domains. This generalizes  results from  \cite{MultipliersMonomialSets} and \cite{HardySpaceDirich}. Finally, in section \ref{sec:equivalencesARNP} we combine results of the previous sections to study, among other things, the validity of a vector-valued version of the Brother's Riesz Theorem on the infinite-dimensional torus (Theorem \ref{Theo:analyticMeasures}). Moreover, we characterize when $\mathcal{H}_{p}(X^{\ast})$ is a dual Banach space, or when
$ \mathcal{H}_{p}(X^{\ast}) \equiv E_{p}(\IDT, X)^{\ast}_{+} $,  the  subspace of the dual of $E_{p}(\IDT,X)$ of those functionals which vanish on elements of the form $\omega^{-\alpha} \otimes x$ for every $\alpha \in \mathbb{Z}^{(\mathbb{N})} \setminus \mathbb{N}_0^{(\mathbb{N})}$ and $x \in X$
(Theorem \ref{Coro:H1dual}).

\section{Embeddings}
\label{sec:spaceProperties}

\noindent
The aim of the present section is to show the following embedding theorem.

\begin{Theo} \label{Rema1}
Let $1 \leq p \leq \infty$. Then  $\mathcal{H}_{p}(X) \subset \mathcal{H}^{+}_{p}(X)$, and the inclusion
is isometric.
\end{Theo}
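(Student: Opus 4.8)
The plan is to reduce the statement to the behavior of the translation operators $D \mapsto D_\varepsilon$ on $\mathcal{H}_p(X)$, and to control these via the corresponding operators on $H_p(\IDT,X)$. First I would observe that, via the Bohr transform, translating a Dirichlet series $D = \sum_n a_n n^{-s}$ by $\varepsilon > 0$ corresponds on the torus side to multiplying the Fourier coefficients $\widehat{f}(\alpha)$ of $f = \BB^{-1}(D) \in H_p(\IDT,X)$ by the factor $\primes^{-\varepsilon \alpha} = \prod_k \primes_k^{-\varepsilon \alpha_k}$ (since $n = \primes^\alpha$ gives $n^{-\varepsilon} = \primes^{-\varepsilon\alpha}$). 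So the key object is the linear map $R_\varepsilon$ on $H_p(\IDT,X)$ sending $\sum_\alpha \widehat{f}(\alpha)\omega^\alpha$ to $\sum_\alpha \primes^{-\varepsilon\alpha}\widehat{f}(\alpha)\omega^\alpha$, and the claim $\mathcal{H}_p(X) \hookrightarrow \mathcal{H}^+_p(X)$ isometrically amounts to showing $\|R_\varepsilon f\|_{H_p(\IDT,X)} \leq \|f\|_{H_p(\IDT,X)}$ for every $\varepsilon > 0$, with $\sup_\varepsilon \|R_\varepsilon f\| = \|f\|$ (the latter being forced since $R_\varepsilon f \to f$ in coefficients, hence $\liminf \|R_\varepsilon f\| \geq \|f\|$ by lower semicontinuity of the norm with respect to coefficientwise convergence, while each $\|R_\varepsilon f\| \leq \|f\|$).

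The heart of the matter is thus the contractivity of $R_\varepsilon$. The natural route is to exhibit $R_\varepsilon$ as an average (or, in the limit, an integral) of the rotation operators, which act isometrically on $H_p(\IDT,X)$. For a single variable $z_k$, multiplication of the $j$-th Fourier mode by $\primes_k^{-\varepsilon j}$ (for $j \geq 0$) is exactly the operation $g(z_k) \mapsto (P_{r_k} * g)(z_k)$ of taking the value of the Poisson–Herglotz/Abel extension at radius $r_k = \primes_k^{-\varepsilon} < 1$ and then... more precisely, $\sum_{j\geq 0} \widehat{g}(j) r_k^j z_k^j$ is the harmonic (here holomorphic, since only $j \geq 0$ appear) extension $g(r_k z_k)$. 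So $R_\varepsilon f$ is obtained from $f$ by the substitution $\omega \mapsto (r_1\omega_1, r_2\omega_2, \dots)$ with $r_k = \primes_k^{-\varepsilon}$, i.e. $R_\varepsilon f(\omega) = f(r\omega)$ where $f$ here means the holomorphic extension to the polydisk. Concretely, I would write, for $f \in H_p(\IDT,X)$ (with the polynomials dense when $p < \infty$),
\[
R_\varepsilon f (\omega) = \int_{\IDT} f(\zeta \omega) \, \prod_{k} P_{r_k}(\zeta_k) \, d\zeta,
\]
where $P_r$ is the (nonnegative, mass-one) Poisson kernel on $\mathbb{T}$; that the infinite product converges and defines an $L_1$ density on $\IDT$ needs $\sum_k (1 - r_k) < \infty$, i.e. $\sum_k (1 - \primes_k^{-\varepsilon}) < \infty$, which holds because $1 - \primes_k^{-\varepsilon} \sim \varepsilon \log \primes_k / \primes_k$ and $\sum_k \log\primes_k/\primes_k < \infty$. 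Granting this, $\|R_\varepsilon f\|_{H_p} \leq \int_{\IDT} \|f(\zeta\, \cdot\,)\|_{H_p} \prod_k P_{r_k}(\zeta_k)\, d\zeta = \|f\|_{H_p}$ by rotation invariance of the Haar measure and the fact that $\prod_k P_{r_k}$ integrates to $1$; for $p = \infty$ one argues on Dirichlet polynomials and passes to the limit, or uses the $\mathbb{D}^\N$-supremum formula in \eqref{EQUA:supremumNormSeries} directly, noting $R_\varepsilon$ corresponds to evaluating the bounded holomorphic function on the smaller polydisk $r\mathbb{D}^\N$.

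The main obstacle I expect is the careful justification of the infinite-product Poisson kernel: showing $\prod_{k=1}^\infty P_{r_k}(\zeta_k)$ converges in $L_1(\IDT)$ to a nonnegative density of total mass one, and that the integral representation above is legitimate (Fubini, dominated convergence over the truncations in the number of variables, and the density of Dirichlet/trigonometric polynomials for $p < \infty$). An alternative, perhaps cleaner, route that sidesteps part of this is to truncate: let $R_\varepsilon^{(m)}$ apply the contraction only in the first $m$ variables — each $R_\varepsilon^{(m)}$ is a finite iterated Poisson average, hence a contraction on $H_p(\IDT,X)$ by a routine Fubini argument using the one-variable fact that $g \mapsto g(r\,\cdot)$ is an $L_p(\mathbb{T},X)$-contraction — and then show $R_\varepsilon^{(m)} f \to R_\varepsilon f$ in $H_p(\IDT,X)$ as $m \to \infty$ (first for polynomials, where it's trivial, then by density and uniform boundedness when $p < \infty$; for $p = \infty$ via the polydisk supremum formula). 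This gives $\|R_\varepsilon f\| \leq \|f\|$ for all $f$, and combined with the lower-semicontinuity remark yields $\|D\|_{\mathcal{H}^+_p(X)} = \sup_\varepsilon \|D_\varepsilon\|_{\mathcal{H}_p(X)} = \|D\|_{\mathcal{H}_p(X)}$, which is the assertion. The only genuinely quantitative input needed throughout is the convergence $\sum_k (1-\primes_k^{-\varepsilon}) < \infty$, equivalently $\sum_k \log \primes_k / \primes_k < \infty$ — standard from the prime number theorem, $\primes_k \sim k\log k$.
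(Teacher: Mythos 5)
Your reduction to the contractivity of the dilation operators $R_\varepsilon$ is sound, and the truncation argument you offer as a fallback does settle the case $1\le p<\infty$; but your primary route contains a genuine error. For fixed $\varepsilon>0$ the radii are $r_k=\primes_k^{-\varepsilon}\to 0$, so $1-\primes_k^{-\varepsilon}\to 1$ and $\sum_k(1-\primes_k^{-\varepsilon})=\infty$ for \emph{every} $\varepsilon>0$; the asymptotic $1-\primes_k^{-\varepsilon}\sim \varepsilon\log\primes_k/\primes_k$ you invoke is false. Moreover the infinite product $\prod_k P_{r_k}(\zeta_k)$ genuinely fails to be an $L_1(\IDT)$ density in the relevant range: by Kakutani's dichotomy (using $1-\int_{\mathbb{T}}\sqrt{P_r}\,d\zeta \asymp r^2$ as $r\to 0$) the product measure $\bigotimes_k P_{r_k}(\zeta_k)\,d\zeta_k$ is absolutely continuous with respect to Haar measure iff $\sum_k r_k^2=\sum_k\primes_k^{-2\varepsilon}<\infty$, i.e.\ iff $\varepsilon>1/2$, and is singular for $0<\varepsilon\le 1/2$. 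The repair is simple but changes the statement: the countable product of the probability measures $P_{r_k}\,d\zeta_k$ always exists as a Radon probability measure $\mu_\varepsilon$ on the compact group $\IDT$, with $\widehat{\mu_\varepsilon}(\alpha)=\prod_k r_k^{|\alpha_k|}$, and convolution with \emph{any} probability measure is a contraction on $L_p(\IDT,X)$ for all $1\le p\le\infty$; this, or your truncation argument, gives $\|D_\varepsilon\|_{\mathcal{H}_p(X)}\le \|D\|_{\mathcal{H}_p(X)}$ with no summability condition at all. Your lower bound is also better obtained from norm convergence $R_\varepsilon f\to f$ (uniform boundedness of the contractions plus density of polynomials) than from an unproved semicontinuity claim, though the latter can be justified by testing against $X^\ast$-valued trigonometric polynomials.

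A second gap is $p=\infty$: trigonometric (equivalently Dirichlet) polynomials are not dense in $H_\infty(\IDT,X)$, so ``first for polynomials, then by density'' does not carry over, and the polydisk supremum formula \eqref{EQUA:supremumNormSeries} is stated only for polynomials — extending a general element of $\mathcal{H}_\infty(X)$ to a bounded holomorphic function on the polydisk is essentially Theorem \ref{Theo:Holomorphicc0}, proved much later. You can close this either by convolving with the measure $\mu_\varepsilon$ above (which works verbatim for $p=\infty$), or as the paper does, by deducing the case $p=\infty$ from the finite-$p$ case via $\|f\|_\infty=\lim_{p\to\infty}\|f\|_p$ (Lemma \ref{Lemm:pnormsLimit}). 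For comparison, the paper obtains the contraction for $p<\infty$ quite differently: it studies the $\mathcal{H}_p(X)$-valued map $z\mapsto D_z$ on $\mathbb{C}_0$, using rotation invariance (Lemma \ref{Lemm:RotationCoefficients}) to see that $\|D_{\varepsilon+it}\|_{\mathcal{H}_p(X)}$ is independent of $t$ and the maximum modulus principle to get that $\varepsilon\mapsto\|D_\varepsilon\|_{\mathcal{H}_p(X)}$ decreases, first for Dirichlet polynomials and then by density (Proposition \ref{Prop:representationHolomorphic}). Your torus-side convolution argument is a legitimate alternative once corrected, but as written its key quantitative claim is wrong and the $p=\infty$ case is unproved.
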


The proof is postponed until the end of the section, once we develop the auxiliary results we need, which are of independent interest and will be useful in other sections. We start with the following simple lemma which is an obvious consequence
of the rotation invariance of the Lebesgue measure on $\mathbb{T}^{\mathbb{N}}$.
\begin{Lemm}
\label{Lemm:RotationCoefficients}
Let
$1 \leq p \leq   \infty$
and  $D=\sum_{n}{a_{n} n^{-s}} \in \mathcal{H}_{p}(X)$. Then for each $\theta \in \IDT$ we have that
 \[
 D^{\theta} := \sum_{n}{b_n n^{-s}} \in \mathcal{H}_{p}(X) \: \mbox{ and  $\|D^{\theta}\|_{\mathcal{H}_p(X)} = \| D\|_{\mathcal{H}_p(X)}$\,,}
 \]
 where $b_{n}:=a_{\primes^{\alpha}} \theta^{\alpha}$ for $n = \primes^\alpha$.
\end{Lemm}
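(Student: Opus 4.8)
The plan is to transport the statement to the torus via the Bohr transform and use that coordinatewise rotations of $\IDT$ are measure preserving. By definition $\BB$ identifies $\mathcal{H}_p(X)$ isometrically with $H_p(\IDT,X)$, so if $f\in H_p(\IDT,X)$ denotes the function corresponding to $D$ (i.e.\ $\widehat f(\alpha)=a_{\primes^\alpha}$ for $\alpha\in\N_0^{(\N)}$ and $\widehat f(\alpha)=0$ otherwise), it suffices to show that the function $f_\theta(\omega):=f(\theta\omega)$, where the product is taken coordinatewise in $\IDT$, again belongs to $H_p(\IDT,X)$, has the same $H_p$-norm as $f$, and satisfies $\widehat{f_\theta}(\alpha)=\theta^\alpha\,\widehat f(\alpha)$ for every $\alpha\in\mathbb Z^{(\N)}$. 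Granting this, $\widehat{f_\theta}(\alpha)=0$ for $\alpha\notin\N_0^{(\N)}$ and $\widehat{f_\theta}(\alpha)=\theta^\alpha a_{\primes^\alpha}=b_{\primes^\alpha}$ for $\alpha\in\N_0^{(\N)}$, so $\BB(f_\theta)=D^\theta$ and hence $D^\theta\in\mathcal{H}_p(X)$ with $\|D^\theta\|_{\mathcal{H}_p(X)}=\|f_\theta\|_{H_p(\IDT,X)}=\|f\|_{H_p(\IDT,X)}=\|D\|_{\mathcal{H}_p(X)}$.

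Two routine ingredients are needed. First, since $\IDT$ is a compact abelian group and $d\omega$ its translation-invariant Haar measure, the map $\omega\mapsto\theta\omega$ is a measure-preserving bijection of $\IDT$; therefore $f\mapsto f_\theta$ is a linear isometry of $L_p(\IDT,X)$ onto itself for every $1\le p\le\infty$, which already gives $\|f_\theta\|_{L_p(\IDT,X)}=\|f\|_{L_p(\IDT,X)}$. Second, for $\alpha\in\mathbb Z^{(\N)}$ the change of variables $\eta=\theta\omega$ yields
\[
\widehat{f_\theta}(\alpha)=\int_{\IDT} f(\theta\omega)\,\omega^{-\alpha}\,d\omega=\int_{\IDT} f(\eta)\,(\theta^{-1}\eta)^{-\alpha}\,d\eta=\theta^{\alpha}\int_{\IDT} f(\eta)\,\eta^{-\alpha}\,d\eta=\theta^{\alpha}\,\widehat f(\alpha),
\]
using $(\theta^{-1}\eta)^{-\alpha}=\theta^{\alpha}\eta^{-\alpha}$ and $\theta^{-1}=\overline\theta\in\IDT$. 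In particular the Fourier spectrum of $f_\theta$ is contained in that of $f$, so $f_\theta\in H_p(\IDT,X)$, and the identification of $\BB(f_\theta)$ with $D^\theta$ follows.

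There is essentially no genuine obstacle here; the only points requiring a little care are the assertion that coordinatewise rotation of $\IDT$ preserves Haar measure (this is exactly the rotation invariance of Lebesgue measure on $\IDT$ invoked before the lemma) and the bookkeeping of the monomials $\theta^{\alpha}$ when $\alpha$ has negative entries in the substitution above. If one prefers to avoid the Fourier-analytic argument, an alternative is to check the identity directly on Dirichlet polynomials $D=\sum_{n\le N}a_n n^{-s}$: by \eqref{EQUA:pNormSeries} the substitution $\omega\mapsto\theta\omega$ leaves the integral defining $\|D\|_{\mathcal{H}_p(X)}$ unchanged for $1\le p<\infty$, and \eqref{EQUA:supremumNormSeries} gives the same for $p=\infty$; one then passes to general $D\in\mathcal{H}_p(X)$ by density of Dirichlet polynomials when $1\le p<\infty$. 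The Fourier-analytic proof has the advantage of treating all $1\le p\le\infty$ uniformly without a separate density step.
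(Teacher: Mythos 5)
Your proof is correct and is exactly the argument the paper has in mind: the paper states the lemma without proof, calling it an obvious consequence of the rotation invariance of the Lebesgue (Haar) measure on $\IDT$, which is precisely the substitution $\omega\mapsto\theta\omega$ and the Fourier-coefficient computation $\widehat{f_\theta}(\alpha)=\theta^\alpha\widehat f(\alpha)$ that you carry out. Nothing further is needed.
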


\vspace{2mm}

For the next result we need a bit more of notation. We denote by $H_{\infty}(\mathbb{C}_0, X)$ the Banach space of all bounded and holomorphic functions from $\mathbb{C}_{0}$ to $X$, endowed with the supremum norm, and define its closed subspace $$\mathbb{A}(\mathbb{C}_{0}, X):= C(\overline{\mathbb{C}_0},X) \cap H_{\infty}(\mathbb{C}_0, X)\,.$$

\begin{Prop}
\label{Prop:representationHolomorphic}
Given  $1 \leq p <\infty$  and a Banach space $X$,
the mapping
\begin{align} \label{embedding}
 \mathcal{H}_{p}(X) \hookrightarrow \mathbb{A}(\mathbb{C}_0,\mathcal{H}_{p}(X))\,,\,\,\, D
 \mapsto [z \mapsto D_z]
\end{align}
is a well-defined isometric embedding. Moreover, for each $D \in \mathcal{H}_{p}(X)$
   \vspace{1mm}
\begin{enumerate}
\item[(i)] $\| D_{\varepsilon + it}\|_{\mathcal{H}_{p}(X)} = \| D_{\varepsilon}\|_{\mathcal{H}_p(X)}$ for each
$\varepsilon \ge0$ and $t \in \mathbb{R}$\,,\vspace{2mm}
\item[(ii)] the map $\varepsilon \mapsto \| D_{\varepsilon}\|_{\mathcal{H}_p(X)}$ is decreasing on $[0, \infty),$\vspace{2mm}
\item[(iii)] and $\text{$\mathcal{H}_p(X)$-$\lim_{\varepsilon \rightarrow 0^+}$}\, D_\varepsilon= D$\,.
\end{enumerate}
\end{Prop}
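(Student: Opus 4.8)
The plan is to reduce everything to the scalar theory applied coordinate-wise, combined with the intrinsic norm formulas \eqref{EQUA:pNormSeries} and the density of Dirichlet polynomials in $\mathcal{H}_p(X)$ for $1 \le p < \infty$.

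First I would check that the map is well-defined, i.e. that for $D \in \mathcal{H}_p(X)$ each translate $D_z$ again lies in $\mathcal{H}_p(X)$, depends holomorphically on $z \in \mathbb{C}_0$, and extends continuously to $\overline{\mathbb{C}_0}$. The natural route: verify the claim first on Dirichlet polynomials $D = \sum_{n=1}^N a_n n^{-s}$, where $D_z = \sum_{n=1}^N a_n n^{-z} n^{-s}$ is visibly a polynomial whose coefficients are entire (hence holomorphic, hence continuous up to the boundary) functions of $z$, and then pass to the limit. The key quantitative input is the contraction estimate $\|D_z\|_{\mathcal{H}_p(X)} \le \|D\|_{\mathcal{H}_p(X)}$ for $\Real(z) \ge 0$; granting this, the map $z \mapsto D_z$ from Dirichlet polynomials is Lipschitz in an appropriate sense and extends by density to all of $\mathcal{H}_p(X)$, with the extension still $\mathbb{A}(\mathbb{C}_0, \mathcal{H}_p(X))$-valued because $\mathbb{A}(\mathbb{C}_0, \mathcal{H}_p(X))$ is closed in $H_\infty(\mathbb{C}_0, \mathcal{H}_p(X))$.

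The contraction estimate and item (i) are where the real work sits, and I expect (i) — the equality $\|D_{\varepsilon + it}\|_{\mathcal{H}_p(X)} = \|D_\varepsilon\|_{\mathcal{H}_p(X)}$ — to come essentially for free from Lemma \ref{Lemm:RotationCoefficients}: translating by $it$ multiplies $a_n$ by $n^{-it}$, and writing $n = \primes^\alpha$ one has $n^{-it} = \prod_j \primes_j^{-\alpha_j it} = \theta^\alpha$ for $\theta = (\primes_j^{-it})_j \in \IDT$, so $D_{it} = D^\theta$ in the notation of the lemma, which is norm-preserving; combined with the (to-be-proved) fact that $D \mapsto D_\varepsilon$ is norm-nonincreasing for real $\varepsilon \ge 0$, this gives both (i) and the full contraction $\|D_z\|_{\mathcal{H}_p(X)} \le \|D\|_{\mathcal{H}_p(X)}$. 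For the monotonicity in (ii) and the basic real-translation contraction, the cleanest argument uses the Poisson-kernel / subordination structure: on a Dirichlet polynomial, $\varepsilon \mapsto \sum_n a_n n^{-\varepsilon} \omega^\alpha$ (with $n = \primes^\alpha$) is, after the Bohr identification, the boundary trace at parameter $\varepsilon$ of a bounded $\mathcal{H}_p(X)$-valued (equivalently $H_p(\IDT,X)$-valued) holomorphic function on $\mathbb{C}_0$, and the norm of such a trace is a decreasing function of $\Real(z)$ by the maximum principle / three-lines-type argument applied to the subharmonic function $z \mapsto \|D_z\|_{\mathcal{H}_p(X)}$ — or, more elementarily, one writes $D_\varepsilon$ as a vertical-average convolution of $D$ against a probability kernel $\mu_\varepsilon$ on $\mathbb{R}$ (the density whose Fourier/Laplace symbol is $n \mapsto n^{-\varepsilon}$, essentially a Gaussian-type or one-sided stable kernel via $n^{-\varepsilon} = \int e^{-\varepsilon u}\,dP_n(u)$), and then $\|D_\varepsilon\|_{\mathcal{H}_p(X)} = \|\mu_\varepsilon * D\|_{\mathcal{H}_p(X)} \le \|\mu_\varepsilon\|_1 \|D\|_{\mathcal{H}_p(X)} = \|D\|_{\mathcal{H}_p(X)}$, and $\mu_{\varepsilon_1 + \varepsilon_2} = \mu_{\varepsilon_1} * \mu_{\varepsilon_2}$ gives monotonicity. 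The main obstacle is making this convolution/subordination identity rigorous at the level of the abstract space $\mathcal{H}_p(X)$ rather than for individual functions; I would handle it by proving the inequality first for Dirichlet polynomials (where all integrals are finite sums and Minkowski's integral inequality applies directly inside the $L_p(\IDT, X)$ norm from \eqref{EQUA:pNormSeries}) and then extending by density.

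Finally, for (iii), $\mathcal{H}_p(X)\text{-}\lim_{\varepsilon \to 0^+} D_\varepsilon = D$: this is immediate for Dirichlet polynomials since $D_\varepsilon \to D$ coefficient-wise among finitely many coefficients, hence in any norm on a fixed finite-dimensional space of polynomials; for general $D \in \mathcal{H}_p(X)$ with $1 \le p < \infty$, pick a Dirichlet polynomial $P$ with $\|D - P\|_{\mathcal{H}_p(X)} < \delta$, and estimate $\|D_\varepsilon - D\| \le \|(D-P)_\varepsilon\| + \|P_\varepsilon - P\| + \|P - D\| \le 2\delta + \|P_\varepsilon - P\|$ using the contraction on the first term, then let $\varepsilon \to 0$. (This step is exactly where $p < \infty$ is used, via density of polynomials.) Assembling these pieces gives that \eqref{embedding} is a well-defined isometric embedding — isometry because the boundary value at $z = 0$ is $D$ itself, so $\|D\|_{\mathcal{H}_p(X)} \le \sup_{z \in \mathbb{C}_0} \|D_z\|_{\mathcal{H}_p(X)} \le \|D\|_{\mathcal{H}_p(X)}$ by the contraction — and establishes (i), (ii), (iii) along the way.
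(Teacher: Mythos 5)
Your proposal is correct and follows essentially the paper's own route: prove everything first for Dirichlet polynomials, getting (i) from the rotation Lemma \ref{Lemm:RotationCoefficients} and the contraction, the isometry and (ii) from the maximum modulus principle on half-planes, and then extend to general $D$ by density of polynomials — just be sure to make explicit that the limit function's value at $z$ really is $D_z$ (a priori only a formal series), which follows because convergence in $\mathcal{H}_p(X)$ implies coefficientwise convergence, exactly as the paper does. One correction to your alternative route for the contraction: the vertical-convolution identity does hold, but the kernel is the Cauchy/Poisson density $\frac{1}{\pi}\frac{\varepsilon}{\varepsilon^2+u^2}$, whose Fourier transform at $\log n$ is $n^{-\varepsilon}$, not a Gaussian-type or one-sided kernel as written; with that fix, Minkowski's inequality on \eqref{EQUA:pNormSeries} plus the semigroup property of these kernels gives the contraction and (ii) for polynomials just as well.
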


\begin{proof}
We show in a first step that the mapping in \eqref{embedding}, restricted
to the subspace of all Dirichlet polynomials in $\mathcal{H}_p(X)$, defines an isometry satisfying  (i)-(ii).
Take a Dirichlet polynomial $D= \sum_{n=1}^{N}{a_{n} n^{-s}}$.  By Lemma \ref{Lemm:RotationCoefficients} for each $\varepsilon \ge 0$ and each $t \in \mathbb{R}$
$$\| D_{\varepsilon + it}\|_{\mathcal{H}_{p}(X)} = \| D_\varepsilon\|_{\mathcal{H}_{p}(X)}\,.$$  The last equality, together with the (distinguished) maximum modulus principle applied to the function
$F \in \mathbb{A}(\mathbb{C}_0,\mathcal{H}_p(X))\,,\,F(z)=D_z $, yields that for each $\varepsilon \geq 0$ we have
\begin{equation*}\label{zero}
 \sup_{\Real{(z)} \geq \varepsilon}{\| F(z)\|_{\mathcal{H}_{p}(X)}} = \sup_{t \in \R}{\| F(\varepsilon + it)\|_{\mathcal{H}_{p}(X)}} = \| D_{\varepsilon}\|_{\mathcal{H}_{p}(X)}\,.
  \end{equation*}
Hence, the statement for Dirichlet polynomials is proved. Take now an  arbitrary Dirichlet series
 $D  \in \mathcal{H}_{p}(X)$. Then we can find a sequence of Dirichlet polynomials $(D^{N})_{N \in \N}$ which in $\mathcal{H}_{p}(X)$ is norm-convergent to $D$. By the previous discussion the functions
  \[
 F_N \in \mathbb{A}(\mathbb{C}_0,\mathcal{H}_p(X))\,,\,F_N(z) = D^N_z
  \]
 form a Cauchy sequence in $\mathbb{A}(\mathbb{C}_0,\mathcal{H}_p(X))$. Denote its limit in
 $\mathbb{A}(\mathbb{C}_0,\mathcal{H}_p(X))$ by $F$. Given $z \in \mathbb{C}_0$, we show  that
 \[
 D_z = F(z)  \in \mathcal{H}_p(X),
 \]
i.e. $a_n( D_z) = a_n(F(z))$ for each $n$. Indeed, we have that
$$\text{$\mathcal{H}_p(X)$-$\lim_N$}\, D^N_z= \text{$\mathcal{H}_p(X)$-$\lim_N$}\, F_N(z)=F(z)\,,$$
and as a consequence for each $n$
  \[
  a_n( D_z)=a_n(D)n^{-z} =\text{$X$-$\lim_N$} \,a_n(D^N)n^{-z}= a_n(F(z))\,.
  \]
Finally, the statements (i) and (ii) are simple consequence since we already proved them for polynomials; and (iii), being evident for polynomials, follows from the continuity of the embedding in \eqref{embedding}.
\end{proof}

\vspace{2mm}

\begin{Theo}
\label{Theo:representationHolomorphic2}
Given  $1 \leq p < +\infty$  and a Banach space $X$,
the mapping
\begin{align} \label{embedding2}
 \mathcal{H}_{p}^{+}(X) \hookrightarrow \mathfrak{D}_{\infty}(\mathcal{H}_{p}(X))\,,\,\,\, \sum_{n}{a_{n} n^{-s}} \mapsto \sum_{n}{(a_{n} n^{-s}) n^{-z}}
\end{align}
is a well-defined isometric embedding.
\end{Theo}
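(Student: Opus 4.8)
The plan is to reduce the statement about $\mathcal{H}^{+}_p(X)$-series to the already-proved Proposition~\ref{Prop:representationHolomorphic}, using the description of $\mathfrak{D}_\infty(Y)$ (with $Y=\mathcal{H}_p(X)$) provided by Bohr's fundamental Theorem~\ref{Biebersau} and the isometry \eqref{note} between $\mathfrak{D}_\infty(Y)$ and $\mathcal{H}^{+}_\infty(Y)$. Given $D=\sum_n a_n n^{-s}\in \mathcal{H}^{+}_p(X)$, I first have to make sense of the candidate image: formally it is the Dirichlet series $\mathcal{E}(D):=\sum_n b_n n^{-z}$ in the Banach space $Y=\mathcal{H}_p(X)$, whose $n$-th coefficient is $b_n:=a_n n^{-s}\in Y$ — note $a_n n^{-s}$ is genuinely an element of $\mathcal{H}_p(X)$, being a monomial Dirichlet polynomial. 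I want to show this series converges on $\mathbb{C}_0$ to a bounded holomorphic $Y$-valued function and that $\|\mathcal{E}(D)\|_{\mathfrak{D}_\infty(Y)}=\|D\|_{\mathcal{H}^{+}_p(X)}$.

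The key computation is the identification of the partial sums and their norms. For $z\in\mathbb{C}_0$, the partial sum $\sum_{n\le N} b_n n^{-z}=\sum_{n\le N} (a_n n^{-s})n^{-z}$ is, as an element of $\mathcal{H}_p(X)$, precisely the Dirichlet polynomial $(D^N)_z$ where $D^N=\sum_{n\le N}a_n n^{-s}$; so Proposition~\ref{Prop:representationHolomorphic} applied to $D^N$ tells me that $z\mapsto (D^N)_z$ lies in $\mathbb{A}(\mathbb{C}_0,\mathcal{H}_p(X))$ with $\sup_{\Real z\ge 0}\|(D^N)_z\|_{\mathcal{H}_p(X)}=\|D^N\|_{\mathcal{H}_p(X)}$. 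Next I use the defining property of $\mathcal{H}^{+}_p(X)$: for each $\varepsilon>0$ the translate $D_\varepsilon$ lies in $\mathcal{H}_p(X)$, and the polynomials $(D^N)_\varepsilon$ converge to $D_\varepsilon$ in $\mathcal{H}_p(X)$ as $N\to\infty$ (denseness of Dirichlet polynomials together with the fact that truncation is the relevant projection here — or, more carefully, that $(D^N)_\varepsilon$ is obtained from $D_\varepsilon$ by applying the norm-one truncation onto the first $N$ coefficients, which converges strongly on $\mathcal{H}_p(X)$ for $1\le p<\infty$). Combining, for fixed $\varepsilon>0$ the sequence of $Y$-valued Dirichlet polynomials $\sum_{n\le N}(a_n n^{-s})n^{-(z+\varepsilon)}$ is Cauchy in $\mathfrak{D}_\infty(Y)$: indeed by the polynomial case its $\mathfrak{D}_\infty(Y)$-norm equals $\|(D^N-D^M)_\varepsilon\|_{\mathcal{H}_p(X)}\to 0$. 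Hence $\mathcal{E}(D)_\varepsilon$ converges in $\mathfrak{D}_\infty(Y)$ for every $\varepsilon>0$, with limit the function $z\mapsto (D_\varepsilon)_z$; and $\|\mathcal{E}(D)_\varepsilon\|_{\mathfrak{D}_\infty(Y)}=\|D_\varepsilon\|_{\mathcal{H}_p(X)}$.

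From here the proof closes as follows. Taking the supremum over $\varepsilon>0$ and invoking the isometric inclusion $\mathfrak{D}_\infty(Y)\subset\mathcal{H}^{+}_\infty(Y)$ from \eqref{note} — or directly the definition of $\mathcal{H}^{+}_\infty(Y)$ — gives that $\mathcal{E}(D)\in\mathcal{H}^{+}_\infty(Y)=\mathfrak{D}_\infty(Y)$ with $\|\mathcal{E}(D)\|_{\mathfrak{D}_\infty(Y)}=\sup_{\varepsilon>0}\|D_\varepsilon\|_{\mathcal{H}_p(X)}=\|D\|_{\mathcal{H}^{+}_p(X)}$, which is exactly the asserted isometry. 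Linearity of $D\mapsto\mathcal{E}(D)$ is clear, and injectivity follows since the coefficients $b_n=a_n n^{-s}$ determine the $a_n$. Conversely, every element of the image has coefficients of the form $a_n n^{-s}$ with $\sum a_n n^{-s}\in\mathcal{H}^{+}_p(X)$, so the map is well-defined onto its range. I expect the main obstacle to be the second step: cleanly justifying that $(D^N)_\varepsilon\to D_\varepsilon$ in $\mathcal{H}_p(X)$ — i.e. that truncation to the first $N$ Dirichlet coefficients is a uniformly bounded, strongly convergent operation on $\mathcal{H}_p(X)$ for $1\le p<\infty$ — and, relatedly, identifying the $\mathfrak{D}_\infty(Y)$-limit of the partial sums pointwise as $z\mapsto (D_\varepsilon)_z$ rather than merely asserting abstract convergence. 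Everything else is a transcription of Proposition~\ref{Prop:representationHolomorphic} with values in the Banach space $\mathcal{H}_p(X)$, together with the already-established fact $\mathcal{H}^{+}_\infty(Y)=\mathfrak{D}_\infty(Y)$.
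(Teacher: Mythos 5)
There is a genuine gap at exactly the step you flag as the main obstacle. You need $(D^N)_\varepsilon \to D_\varepsilon$ in $\mathcal{H}_p(X)$, and you justify it by claiming that truncation to the first $N$ Dirichlet coefficients is a norm-one (or at least uniformly bounded, strongly convergent) projection on $\mathcal{H}_p(X)$ for $1\le p<\infty$. That is false: partial-sum projections are not contractions on $\mathcal{H}_p$, and they are not even uniformly bounded in general --- the best available general estimate is the $C\log N$ bound of Theorem \ref{Theo:PerronPartialSum} (in the scalar case the Schauder-basis result of \cite{AOS2014} holds only for $1<p<\infty$, and its vector-valued analogue requires additional hypotheses on $X$). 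The statement you actually need is precisely Corollary \ref{Coro:HpUniformAbcissa}; but in the paper that corollary is deduced from Theorem \ref{Theo:PerronPartialSum}, whose proof for $1\le p<\infty$ uses exactly the embedding you are trying to establish, so invoking it here would be circular, and an independent argument is not easy to supply (Abel summation with the trivial coefficient bound $\|a_n\|_X\le\|D\|_{\mathcal{H}^{+}_p(X)}$ only yields convergence for $\Real{(z)}>1$, not for small $\varepsilon$). A similar circularity affects your appeal to $\mathcal{H}^{+}_{\infty}(Y)=\mathfrak{D}_\infty(Y)$ (Corollary \ref{infty=infty}), which also rests on Theorem \ref{Theo:PerronPartialSum}, although that particular step could be avoided: uniform convergence of the series on every $\mathbb{C}_\varepsilon$ together with boundedness already gives membership in $\mathfrak{D}_\infty(Y)$ by definition.

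The paper's proof sidesteps the partial-sum issue entirely. It applies Proposition \ref{Prop:representationHolomorphic} to the translates $D_\varepsilon\in\mathcal{H}_p(X)$ themselves (not to the truncations $D^N$), obtaining bounded holomorphic functions $F_\varepsilon(z)=(D_\varepsilon)_z$ with $\|F_\varepsilon\|_\infty=\|D_\varepsilon\|_{\mathcal{H}_p(X)}$; gluing over $\varepsilon>0$ yields a bounded holomorphic $F$ on $\mathbb{C}_0$ with $\|F\|_\infty=\sup_{\varepsilon>0}\|D_\varepsilon\|_{\mathcal{H}_p(X)}=\|D\|_{\mathcal{H}^{+}_p(X)}$. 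It then notes that the $\mathcal{H}_p(X)$-valued series $\sum_n (a_n n^{-s})n^{-z}$ converges absolutely for $\Real{(z)}>1$ (its coefficients are norm-bounded), so $F$ is a bounded holomorphic extension of its limit function, and Bohr's Theorem \ref{Biebersau} upgrades this to convergence of the series to $F$ on all of $\mathbb{C}_0$, i.e. $F\in\mathfrak{D}_\infty(\mathcal{H}_p(X))$. If you want to keep your scheme, replace the truncation argument by this gluing-plus-Bohr argument; as written, the proposal does not close.
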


\begin{proof}
Let $D = \sum_{n}{a_{n} n^{-s}}$ be a Dirichlet series in  $\mathcal{H}^{+}_{p}(X)$. We are going to  show that the function
\begin{align}
F: \mathbb{C}_0 \rightarrow \mathcal{H}_p(X)\,,\, F(z) = \sum_n \frac{a_n}{n^z} n^{-s}
\end{align}
is  well-defined, holomorphic and bounded. Indeed, for each $\varepsilon >0$ we by assumption have that $D_\varepsilon \in \mathcal{H}_p(X)$, and hence by Lemma \ref{Prop:representationHolomorphic}
that
\[
F_\varepsilon: \mathbb{C}_0 \rightarrow \mathcal{H}_p(X)\,,\, F_\varepsilon(z) = \sum_n \frac{a_n}{n^{\varepsilon +z}} n^{-s}
\]
is well-defined, holomorphic and bounded with
$\|F_\varepsilon\|_\infty = \|D_\varepsilon\|_{\mathcal{H}_p(X)}$. Therefore $F$ is well-defined, holomorphic and bounded with $$\| F\|_{\infty} = \sup_{\varepsilon > 0}{\| F_{\varepsilon}\|_{\infty}} = \sup_{\varepsilon > 0}{\| D_{\varepsilon}\|_{\mathcal{H}_{p}(X)}} = \| D\|_{\mathcal{H}_{p}^{+}(X)}\,.$$
Notice that for $\Real{(z)} > 1$, the Dirichlet series $\sum_{n}{(a_{n}n^{-s})n^{-z}}$ with coefficients $a_{n}n^{-s} \in \mathcal{H}_{p}(X)$ is absolutely convergent. Therefore, $F(z)$ extends the limit function of $\sum_{n}{(a_{n}n^{-s})n^{-z}}$ to a bounded and holomorphic funtion on $\mathbb{C}_{0}$, and using now Theorem \ref{Biebersau} we conclude that for each $z \in \mathbb{C}_{0}$
\[
F(z)=
\text{$\mathcal{H}_p(X)$-$\lim_N$} \sum_{n=1}^N \frac{a_n}{n^z} n^{-s}
=
\text{$\mathcal{H}_p(X)$-$\lim_N$} \sum_{n=1}^N (a_n n^{-s}) \frac{1}{n^z}
\,,
\]
which means that $F \in \mathfrak{D}_{\infty}(\mathcal{H}_{p}(X))$.
\end{proof}

In order to tackle the proof of Theorem \ref{Rema1} in the case $p=+\infty$, we require the next result.

\begin{Lemm}
\label{Lemm:pnormsLimit}
If $D$ is a Dirichlet series that belongs to $\mathcal{H}_{p}^{+}(X)$ for every $p \in [1, + \infty)$, then
\[ \| D\|_{\mathcal{H}_{\infty}^{+}(X)} = \lim_{p \rightarrow + \infty}{\| D\|_{\mathcal{H}_{p}^{+}(X)}} \in [0, +\infty].\]
\end{Lemm}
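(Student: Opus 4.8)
The plan is to prove the two inequalities separately, using monotonicity of the $\mathcal{H}_p$-norms in $p$ to get one direction almost for free, and a uniform-integrability/limit argument to get the other.

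First I would record the elementary fact that for a fixed Dirichlet polynomial (or for any fixed $D_\varepsilon \in \mathcal{H}_p(X)$), the map $p \mapsto \|D_\varepsilon\|_{\mathcal{H}_p(X)}$ is nondecreasing on $[1,\infty]$; this follows from Jensen's inequality applied to the probability space $(\IDT, d\omega)$, exactly as for scalar $L_p$-norms on a probability space, together with the formula \eqref{EQUA:pNormSeries}. Taking the supremum over $\varepsilon>0$ preserves monotonicity, so $p \mapsto \|D\|_{\mathcal{H}_p^+(X)}$ is nondecreasing; in particular $\lim_{p\to\infty}\|D\|_{\mathcal{H}_p^+(X)}$ exists in $[0,+\infty]$ and is $\le \|D\|_{\mathcal{H}_\infty^+(X)}$. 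That already gives one inequality. Note it also shows the limit is the supremum over finite $p$.

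For the reverse inequality $\|D\|_{\mathcal{H}_\infty^+(X)} \le \lim_{p\to\infty}\|D\|_{\mathcal{H}_p^+(X)} =: L$, I want to show that for every $\varepsilon>0$ one has $\|D_\varepsilon\|_{\mathcal{H}_\infty(X)} \le L$, since then taking the supremum over $\varepsilon$ finishes the job. Fix $\varepsilon>0$. For $\delta>0$ small, $D_{\varepsilon+\delta}$ has coefficients $a_n n^{-(\varepsilon+\delta)}$ which decay like $n^{-\varepsilon}\cdot n^{-\delta}$, and in fact one expects $D_{\varepsilon}$ to be the $\mathcal{H}_p(X)$-limit of the partial sums and the partial sums to be Dirichlet polynomials; the cleanest route is to work with a fixed Dirichlet polynomial first. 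For a Dirichlet polynomial $P = \sum_{n=1}^N a_n n^{-s}$, formula \eqref{EQUA:supremumNormSeries} gives $\|P\|_{\mathcal{H}_\infty(X)} = \sup_{\omega\in\IDT}\|g(\omega)\|_X$ where $g(\omega) = \sum_{\alpha} a_{p^\alpha}\omega^\alpha$ is a continuous $X$-valued function on the compact group $\IDT$, and since $(\IDT,d\omega)$ is a probability space, $\|g\|_{L_\infty(\IDT,X)} = \lim_{p\to\infty}\|g\|_{L_p(\IDT,X)} = \lim_{p\to\infty}\|P\|_{\mathcal{H}_p(X)} = \lim_p \|P\|_{\mathcal{H}_p^+(X)}$ (the last equality because for polynomials $\|P_\delta\|_{\mathcal{H}_p(X)} \le \|P\|_{\mathcal{H}_p(X)}$ by Proposition~\ref{Prop:representationHolomorphic}(ii), and the sup over $\delta$ is attained in the limit $\delta\to 0$ by (iii)). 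So the Lemma holds for Dirichlet polynomials.

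For the general case I would apply Theorem~\ref{Theo:representationHolomorphic2}: for each finite $p$, $D \in \mathcal{H}_p^+(X)$ embeds isometrically into $\mathfrak{D}_\infty(\mathcal{H}_p(X))$, so $F(z) = D_z \in \mathcal{H}_p(X)$ is well-defined on $\mathbb{C}_0$ with $\sup_{z\in\mathbb{C}_0}\|D_z\|_{\mathcal{H}_p(X)} = \|D\|_{\mathcal{H}_p^+(X)} \le L$; in particular each $D_\varepsilon \in \mathcal{H}_p(X)$ with $\|D_\varepsilon\|_{\mathcal{H}_p(X)} \le L$ for all finite $p$. Fix $\varepsilon>0$ and consider the $X$-valued function $h_\varepsilon(\omega) = \sum_\alpha a_{p^\alpha}(p^\alpha)^{-\varepsilon}\omega^\alpha$, which is in $\bigcap_{p<\infty} L_p(\IDT,X)$ with $\|h_\varepsilon\|_{L_p(\IDT,X)} \le L$; letting $p\to\infty$ via monotone convergence (Fatou) shows $h_\varepsilon \in L_\infty(\IDT,X)$ with $\|h_\varepsilon\|_{L_\infty(\IDT,X)} \le L$. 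Hence $D_\varepsilon \in \mathcal{H}_\infty(X)$ with norm $\le L$, and supremizing over $\varepsilon>0$ gives $\|D\|_{\mathcal{H}_\infty^+(X)} \le L$. The main obstacle is the bookkeeping in this last step: one must be careful that "$D_\varepsilon \in \mathcal{H}_p(X)$ for all $p$, with uniformly bounded norm" genuinely upgrades to "$D_\varepsilon \in \mathcal{H}_\infty(X) = H_\infty(\IDT,X)$", which is precisely the statement that $\bigcap_{1\le p<\infty} L_p(\IDT,X) \cap \{$bounded $L_p$-norms$\} = L_\infty(\IDT,X)$ with matching norms — true because $d\omega$ is a finite measure — and that this bounded function is the one whose Fourier support lies in $\N_0^{(\N)}$, which is inherited from the $\mathcal{H}_p(X)$ membership. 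Everything else is monotonicity of $L_p$-norms on a probability space plus the two embedding results already established.
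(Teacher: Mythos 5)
Your argument is correct and is essentially the paper's own proof: both rest on the standard fact that on the probability space $\IDT$ the $L_p$-norms of the function representing each $D_\varepsilon$ increase to its (possibly infinite) $L_\infty$-norm as $p\to\infty$, combined with an elementary interchange of $\sup_{\varepsilon>0}$ with the limit in $p$. The intermediate detour through Dirichlet polynomials and the appeal to Theorem \ref{Theo:representationHolomorphic2} are harmless but unnecessary, since the definition of $\|\cdot\|_{\mathcal{H}_p^+(X)}$ already gives $\|D_\varepsilon\|_{\mathcal{H}_p(X)}\leq\|D\|_{\mathcal{H}_p^+(X)}$ for every $\varepsilon>0$.
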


\begin{proof}
It is a well-known fact that if a measurable function $f: \mathbb{T}^{\N} \rightarrow X$ belongs to $L_{p}(\mathbb{T}^{\N},X)$ for every $p \in [1, +\infty)$, then $\| f\|_{\infty} = \lim_{p \rightarrow + \infty}{\| f\|_{p}}$. Applying this, we obtain that
\begin{align*}
\| D\|_{\mathcal{H}_{\infty}^{+}(X)} & = \sup_{\varepsilon > 0}{\| D_{\varepsilon}\|_{\mathcal{H}_{\infty}(X)}} = \sup_{\varepsilon > 0}{\lim_{p \rightarrow + \infty}\| D_{\varepsilon}\|_{\mathcal{H}_{p}(X)}}\\
& \leq \liminf_{p \rightarrow \infty}{\sup_{\varepsilon > 0}{\| D_{\varepsilon}\|_{\mathcal{H}_{p}X)}}} = \liminf_{p \rightarrow \infty}{\| D\|_{\mathcal{H}_{p}^{+}(X)}}\\
& \leq \limsup_{p \rightarrow \infty}{\| D\|_{\mathcal{H}_{p}^{+}(X)}} \leq \| D\|_{\mathcal{H}_{\infty}^{+}(X)}.
\end{align*}
\end{proof}

\begin{proof}[Proof of Theorem \ref{Rema1}]
In the case $1 \leq p < + \infty$, this is obviously a consequence of Proposition \ref{Prop:representationHolomorphic},
(ii) and (iii). If $p=+\infty$, we can use the previous case and apply Lemma \ref{Lemm:pnormsLimit} to get
\[ \| D\|_{\mathcal{H}^{+}_{\infty}(X)} = \lim_{p \rightarrow + \infty}{\| D\|_{\mathcal{H}^{+}_{p}(X)}} = \lim_{p \rightarrow + \infty}{\| D\|_{\mathcal{H}_{p}(X)}} = \| D\|_{\mathcal{H}_{\infty}(X)}. \]
\end{proof}

\section{Approximation}

\noindent We need the following quantitative version of Bohr's fundamental theorem (see Theorem \ref{Biebersau}). The proof follows exactly the lines from the scalar case that can be found in \cite{Queffelec1} or \cite{QplusQ}.

\begin{Theo}
\label{old}
There exists a constant $C > 0$ such that  every Dirichlet series $D = \sum_{n}{a_{n} n^{-s}} \in \mathfrak{D}_{\infty}(X)$  we have that  for each $x > 1$
\begin{equation*}
\Big\| \sum_{n \leq x}{a_{n} n^{-s}} \Big\|_{\mathfrak{D}_{\infty}(X)} \leq\, C \,\log{x}\, \| D\|_{\mathfrak{D}_{\infty}(X)}\, .
\end{equation*}
\end{Theo}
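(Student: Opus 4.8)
The plan is to reduce the vector-valued statement to the scalar case by composing with functionals, and then to invoke the known scalar partial-sum estimate. Concretely, fix $D = \sum_n a_n n^{-s} \in \mathfrak{D}_\infty(X)$ with $\|D\|_{\mathfrak{D}_\infty(X)} \le 1$, and fix $x > 1$. For any $x^\ast \in B_{X^\ast}$ the series $x^\ast D := \sum_n \langle x^\ast, a_n\rangle n^{-s}$ lies in $\mathfrak{D}_\infty(\mathbb{C})$ with $\|x^\ast D\|_{\mathfrak{D}_\infty(\mathbb{C})} \le \|D\|_{\mathfrak{D}_\infty(X)}$. The scalar version of the theorem (the form in \cite{Queffelec1} or \cite{QplusQ}) gives a universal constant $C > 0$ with
\[
\Big\| \sum_{n \le x} \langle x^\ast, a_n\rangle n^{-s} \Big\|_{\mathfrak{D}_\infty(\mathbb{C})} \le C \log x \, \|x^\ast D\|_{\mathfrak{D}_\infty(\mathbb{C})} \le C \log x.
\]
Since $\sum_{n\le x}\langle x^\ast, a_n\rangle n^{-s} = x^\ast\big(\sum_{n\le x} a_n n^{-s}\big)$ and the $\mathfrak{D}_\infty$-norm of a Dirichlet polynomial equals $\sup_{t\in\mathbb{R}}$ of the norm of the partial sums by \eqref{EQUA:supremumNormSeries} (so it is genuinely a supremum of values of the polynomial on the boundary line), taking the supremum over $x^\ast \in B_{X^\ast}$ and using Hahn–Banach recovers $\big\|\sum_{n\le x} a_n n^{-s}\big\|_{\mathfrak{D}_\infty(X)} \le C \log x$, which is the claim after rescaling.

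There is one point that needs care: the scalar statement must be available \emph{with a constant independent of the series}, which is exactly what the classical Bohr–type argument delivers. The standard proof runs through Perron's formula / a contour-integral representation for $\sum_{n\le x} a_n n^{-s}$ against $1/w$ shifted to $\mathrm{Re}(w) = \delta$ small, combined with the convexity/subharmonicity estimates from Bohr's theorem (Theorem~\ref{Biebersau}); optimizing the truncation parameter produces the $\log x$ factor. Since the paper explicitly says ``the proof follows exactly the lines from the scalar case,'' the cleanest route is in fact to redo that contour argument directly in $X$: write $\sum_{n\le x} a_n n^{-s}$ as a Bochner integral of $D(s+w)$ against a kernel, estimate the Bochner integral by the scalar estimate of the kernel times $\|D\|_{\mathfrak{D}_\infty(X)}$, and optimize. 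Both routes are legitimate; the functional-composition route is shorter and makes it transparent that the constant $C$ is the same as in the scalar theorem.

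The main obstacle is really bookkeeping rather than a genuine difficulty: one must make sure the partial sum $\sum_{n\le x} a_n n^{-s}$ is itself being measured in the $\mathfrak{D}_\infty(X)$-norm (a Dirichlet polynomial, so it trivially lies in $\mathfrak{D}_\infty(X)$), and that the identity $\|P\|_{\mathfrak{D}_\infty(X)} = \sup_{x^\ast\in B_{X^\ast}} \|x^\ast P\|_{\mathfrak{D}_\infty(\mathbb{C})}$ for Dirichlet polynomials $P$ holds — which follows immediately since for each fixed $s\in\mathbb{C}_0$, $\|P(s)\|_X = \sup_{x^\ast\in B_{X^\ast}} |x^\ast P(s)|$, and $\mathfrak{D}_\infty$-norms are suprema over $s\in\mathbb{C}_0$. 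With that in hand, the vector-valued estimate is just the supremum over $x^\ast$ of the scalar estimates, and no new analytic input beyond Theorem~\ref{Biebersau} and the scalar Perron-type argument is required.
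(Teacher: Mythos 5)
Your argument is correct, but it takes a genuinely different route from the paper's. The paper proves Theorem \ref{old} by simply re-running the scalar Perron/contour-integral argument of \cite{Queffelec1}, \cite{QplusQ} with absolute values replaced by norms (this is exactly the second route you sketch, and the paper gives no more detail than the remark that the proof ``follows exactly the lines from the scalar case''). Your primary route instead uses the scalar theorem as a black box and transfers it by Hahn--Banach: since $\|P\|_{\mathfrak{D}_{\infty}(X)}=\sup_{s\in\mathbb{C}_0}\|P(s)\|_X=\sup_{x^\ast\in B_{X^\ast}}\|x^\ast P\|_{\mathfrak{D}_{\infty}(\mathbb{C})}$ for every Dirichlet polynomial $P$, and $\|x^\ast D\|_{\mathfrak{D}_{\infty}(\mathbb{C})}\le\|D\|_{\mathfrak{D}_{\infty}(X)}$ for each $x^\ast\in B_{X^\ast}$ (the scalar series converges on $\mathbb{C}_0$ to $x^\ast\circ D$, which is bounded), the scalar partial-sum bound passes to the vector-valued setting with the same constant. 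This is legitimate and shorter than transplanting the contour argument; the reason it works is precisely that $\|\cdot\|_{\mathfrak{D}_{\infty}(X)}$ is a pointwise supremum norm, so it commutes with the supremum over functionals. It is worth noting that the same reduction does not work for the $\mathcal{H}_p^+$-analogue (Theorem \ref{Theo:PerronPartialSum}), because an $L_p$-norm of a vector-valued function is not the supremum over $x^\ast\in B_{X^\ast}$ of the scalar $L_p$-norms; this is why the paper handles $p<\infty$ via the embedding of $\mathcal{H}_p^+(X)$ into $\mathfrak{D}_{\infty}(\mathcal{H}_p(X))$ together with the present theorem. In short: the paper's route buys a self-contained vector-valued argument, while yours buys brevity and makes transparent that the constant is the scalar one.
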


The following result extends \cite[Theorem 3.2]{BayartComposition}
from the scalar case to the vector-valued case
(in the scalar case see also  \cite[Corollary 4]{AOS2014} which proves that  the $1/n^s$ for $1 < p < \infty$ even form a Schauder bases in
$\mathcal{H}_p(\mathbb{C})$).

\begin{Theo}
\label{Theo:PerronPartialSum}
There exists a constant $C > 0$ such that for every $1 \leq p \leq \infty$ and every Dirichlet series $D = \sum_{n}{a_{n} n^{-s}} \in \mathcal{H}_{p}^{+}(X)$  we have that  for each $x > 1$
\begin{equation}
\label{equa:PerronPartialSum}
\Big\| \sum_{n \leq x}{a_{n} n^{-s}} \Big\|_{\mathcal{H}_{p}^{+}(X)} \leq\, C \, \log{x}\,\| D\|_{\mathcal{H}_{p}^{+}(X)}\,.
\end{equation}
\end{Theo}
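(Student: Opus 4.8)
The plan is to reduce the statement to the scalar case quantitative Bohr theorem, Theorem~\ref{old}, by exploiting the isometric embedding of Theorem~\ref{Theo:representationHolomorphic2}. The key observation is that for a Dirichlet series $D = \sum_n a_n n^{-s} \in \mathcal{H}_p^+(X)$ with $1 \leq p < \infty$, the associated function $F(z) = \sum_n (a_n n^{-s}) n^{-z}$ lies in $\mathfrak{D}_\infty(Y)$ where $Y = \mathcal{H}_p(X)$, with $\|F\|_{\mathfrak{D}_\infty(Y)} = \|D\|_{\mathcal{H}_p^+(X)}$. Crucially, taking the $x$-th partial sum in the variable $z$ corresponds exactly to taking the partial sum of $D$: the coefficient of $n^{-z}$ in $F$ is the element $a_n n^{-s} \in Y$, so $\sum_{n \leq x} (a_n n^{-s}) n^{-z}$ is the function associated (under the same embedding) to the Dirichlet series $\sum_{n \leq x} a_n n^{-s} \in \mathcal{H}_p^+(X)$. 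Hence applying Theorem~\ref{old} to $F \in \mathfrak{D}_\infty(Y)$ gives
\[
\Big\| \sum_{n \leq x} a_n n^{-s} \Big\|_{\mathcal{H}_p^+(X)} = \Big\| \sum_{n \leq x} (a_n n^{-s}) n^{-z} \Big\|_{\mathfrak{D}_\infty(Y)} \leq C \log x \, \|F\|_{\mathfrak{D}_\infty(Y)} = C \log x \, \|D\|_{\mathcal{H}_p^+(X)}
\]
with the \emph{same} universal constant $C$ from Theorem~\ref{old}, independent of $p$ and of $X$ (and of $Y$).

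For this to be rigorous I must check that $\sum_{n \leq x} a_n n^{-s}$ actually lies in $\mathcal{H}_p^+(X)$ so that the left-hand norm makes sense; but this is immediate since finitely supported Dirichlet series are Dirichlet polynomials, which belong to $\mathcal{H}_p(X) \subset \mathcal{H}_p^+(X)$ by Theorem~\ref{Rema1}. I also need that the embedding of Theorem~\ref{Theo:representationHolomorphic2} genuinely sends $\sum_{n \leq x} a_n n^{-s}$ to $\sum_{n \leq x} (a_n n^{-s}) n^{-z}$, which it does by the very definition of the embedding applied to a polynomial. So the $1 \leq p < \infty$ case needs no real work beyond assembling these pieces.

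The case $p = \infty$ I would handle by the same limiting device used in the proof of Theorem~\ref{Rema1}: for $D \in \mathcal{H}_\infty^+(X)$ with $\|D\|_{\mathcal{H}_\infty^+(X)} < \infty$ we have $\|D\|_{\mathcal{H}_p^+(X)} \leq \|D\|_{\mathcal{H}_\infty^+(X)}$ for all finite $p$ (since $\|\cdot\|_{\mathcal{H}_p(X)} \leq \|\cdot\|_{\mathcal{H}_\infty(X)}$ on the torus, then take the sup over $\varepsilon$), so $D \in \mathcal{H}_p^+(X)$ for every finite $p$ and the finite-$p$ estimate applies uniformly:
\[
\Big\| \sum_{n \leq x} a_n n^{-s} \Big\|_{\mathcal{H}_p^+(X)} \leq C \log x \, \|D\|_{\mathcal{H}_p^+(X)} \leq C \log x \, \|D\|_{\mathcal{H}_\infty^+(X)}.
\]
Since $\sum_{n \leq x} a_n n^{-s}$ is a Dirichlet polynomial, it lies in $\mathcal{H}_q^+(X)$ for all $q$, so by Lemma~\ref{Lemm:pnormsLimit} its $\mathcal{H}_\infty^+(X)$-norm is the limit of its $\mathcal{H}_p^+(X)$-norms as $p \to \infty$; letting $p \to \infty$ in the displayed inequality then yields the claim for $p = \infty$.

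I do not anticipate a genuine obstacle here — the architecture of Sections~2 and~3 has been arranged precisely so that this theorem falls out of Theorem~\ref{old} via the $\mathfrak{D}_\infty(\mathcal{H}_p(X))$ representation. The one point that deserves a careful sentence is the uniformity of the constant: Theorem~\ref{old} is stated with a single constant $C$ valid for \emph{every} Banach space, so applying it with the Banach space $\mathcal{H}_p(X)$ in place of $X$ costs nothing, and this is what makes the final constant independent of $p$. If I wanted to avoid invoking Lemma~\ref{Lemm:pnormsLimit} for the endpoint, an alternative for $p = \infty$ would be to run the same argument directly with $Y = \mathcal{H}_\infty(X)$ and the embedding $\mathcal{H}_\infty^+(X) \hookrightarrow \mathfrak{D}_\infty(\mathcal{H}_\infty(X))$ (which is the content of the identification $\mathcal{H}_\infty^+(X) = \mathfrak{D}_\infty(X)$ together with $\mathfrak{D}_\infty(X) \hookrightarrow \mathfrak{D}_\infty(\mathcal{H}_\infty(X))$), but the limiting argument is cleaner and self-contained given what has already been established.
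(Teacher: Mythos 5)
Your proposal is correct and is essentially the paper's own proof: for $1 \leq p < \infty$ the paper likewise applies Theorem \ref{old} to the image of $D$ under the embedding $\mathcal{H}_p^+(X) \hookrightarrow \mathfrak{D}_\infty(\mathcal{H}_p(X))$ of Theorem \ref{Theo:representationHolomorphic2}, using exactly the identification of partial sums you describe, and for $p=\infty$ it uses the same passage to finite $p$ followed by the limit via Lemma \ref{Lemm:pnormsLimit}. The uniformity of the constant, which you rightly flag, is obtained in the same way, namely from the fact that Theorem \ref{old} holds with one constant for every Banach space.
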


\begin{proof}
The proof of the case  $1 \leq p < \infty$ is surprinsingly simple if we combine Theorem \ref{old} with the embedding $\mathcal{H}_{p}^{+}(X)$ in $\mathfrak{D}_\infty(\mathcal{H}_{p}(X))$ given in Theorem \ref{Theo:representationHolomorphic2} (which has been already proved for this value of $p$). Indeed, for any Dirichlet series $\sum_{n}{a_{n}n^{-s}}$ in $\mathcal{H}^{+}_p(X)$ we have that
\begin{align*}
\Big\| \sum_{n \leq x}{a_{n} n^{-s}} \Big\|_{\mathcal{H}^{+}_p(X)}
&
= \Big\| \sum_{n \leq x} (a_n n^{-s}) n^{-z} \Big\|_{\mathfrak{D}_{\infty}(\mathcal{H}_{p}(X))}
\\&
\leq C \log{x} \| \sum_{n} (a_n n^{-s}) n^{-z}\|_{\mathfrak{D}_{\infty}(\mathcal{H}_{p}(X))}
\\&
= C \log{x} \| D\|_{\mathcal{H}^{+}_{p}(X)} \,.
\end{align*}
Let us prove now the case $p=+\infty$. If $D \in \mathcal{H}_{\infty}^{+}(X)$, then $ D \in \mathcal{H}_{p}^{+}(X)$ for each $p \in [1, + \infty)$ and so inequality \eqref{equa:PerronPartialSum} is satisfied for every such $p$. By Lemma \ref{Lemm:pnormsLimit}, we can then take limit $p \rightarrow + \infty$ in \eqref{equa:PerronPartialSum} to get the desired conclusion.
\end{proof}

\begin{Coro}
\label{Coro:HpUniformAbcissa}
Let $1 \leq p \leq \infty$ and $D \in \mathcal{H}^{+}_p(X)$. Then for every $\varepsilon > 0$ the partial sums $\sum_{n=1}^{N}{(a_{n} n^{-\varepsilon}) n^{-s}}$ in $\mathcal{H}_p(X)$ converge  to $D_{\varepsilon}$.
\end{Coro}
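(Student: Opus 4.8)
The plan is to reduce the statement to Bohr's fundamental Theorem~\ref{Biebersau}, treating $1\le p<\infty$ and $p=\infty$ separately. For $1\le p<\infty$ this is almost immediate from Theorem~\ref{Theo:representationHolomorphic2}: given $D=\sum_n a_n n^{-s}\in\mathcal{H}^{+}_{p}(X)$, the map $F(z):=\sum_n(a_n n^{-s})n^{-z}$ is, by that theorem, a Dirichlet series in the variable $z$ with coefficients in the Banach space $\mathcal{H}_{p}(X)$ which lies in $\mathfrak{D}_{\infty}(\mathcal{H}_{p}(X))$ and has $F(\varepsilon)=D_\varepsilon$. Applying Theorem~\ref{Biebersau} over $\mathcal{H}_{p}(X)$, the partial sums $\sum_{n\le N}(a_n n^{-s})n^{-z}$ converge to $F$ uniformly on each $\mathbb{C}_{\varepsilon'}$ with $\varepsilon'>0$; choosing $\varepsilon'=\varepsilon/2$, evaluating at $z=\varepsilon\in\mathbb{C}_{\varepsilon/2}$, and using $(a_n n^{-s})n^{-\varepsilon}=(a_n n^{-\varepsilon})n^{-s}$, one gets $\sum_{n=1}^{N}(a_n n^{-\varepsilon})n^{-s}\to D_\varepsilon$ in $\mathcal{H}_{p}(X)$. (This is in fact already the pointwise statement at $z=\varepsilon$ contained in the proof of Theorem~\ref{Theo:representationHolomorphic2}.)

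For $p=\infty$ I would go through $\mathfrak{D}_{\infty}(X)$. Since $\mathcal{H}^{+}_{\infty}(X)=\mathfrak{D}_{\infty}(X)$ (Corollary~\ref{infty=infty}) and $\mathcal{H}_{\infty}(X)\hookrightarrow\mathfrak{D}_{\infty}(X)$ isometrically (Theorem~\ref{Rema1}), the remainder $R_N:=D-\sum_{n\le N}a_n n^{-s}$ lies in $\mathfrak{D}_{\infty}(X)$, and $D_\varepsilon-\sum_{n\le N}(a_n n^{-\varepsilon})n^{-s}=(R_N)_\varepsilon$ has limit function $s\mapsto R_N(s+\varepsilon)$ on $\mathbb{C}_{0}$, so that
\[
\Big\|D_\varepsilon-\sum_{n\le N}(a_n n^{-\varepsilon})n^{-s}\Big\|_{\mathcal{H}_{\infty}(X)}=\|(R_N)_\varepsilon\|_{\mathfrak{D}_{\infty}(X)}=\sup_{\Real(w)>\varepsilon}\Big\|\sum_{n>N}a_n n^{-w}\Big\|_X .
\]
By Theorem~\ref{Biebersau} applied to $D\in\mathfrak{D}_{\infty}(X)$, the partial sums of $D$ converge to $D$ uniformly on $\mathbb{C}_{\varepsilon/2}\supseteq\{w:\Real(w)>\varepsilon\}$, so the right-hand side tends to $0$ as $N\to\infty$, which finishes the proof.

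The delicate point is precisely the case $p=\infty$. The naive approach — deduce it from the finite case by letting $p\to\infty$ — fails: for each fixed $p<\infty$ the $\mathcal{H}_{p}(X)$-remainders tend to $0$, but their supremum over $p$, which by Lemma~\ref{Lemm:pnormsLimit} together with the isometric inclusions of Theorem~\ref{Rema1} equals the $\mathcal{H}_{\infty}(X)$-remainder, is by Theorem~\ref{Theo:PerronPartialSum} only controlled up to a factor $\log N$, which is not enough. One therefore has to leave the $L_p$-scale and use the uniform convergence on half-planes furnished by Bohr's theorem; the detour through $\mathfrak{D}_{\infty}(X)$ above does exactly this. (Should Corollary~\ref{infty=infty} not be available at this stage, one could instead establish the $p=\infty$ analogue $\mathcal{H}^{+}_{\infty}(X)\hookrightarrow\mathfrak{D}_{\infty}(\mathcal{H}_{\infty}(X))$ of Theorem~\ref{Theo:representationHolomorphic2} and repeat verbatim the argument of the finite case.)
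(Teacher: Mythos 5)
Your argument for $1\le p<\infty$ is fine: the convergence of the partial sums of $D_\varepsilon$ at each fixed $z\in\mathbb{C}_0$ is indeed already contained in the last display of the proof of Theorem \ref{Theo:representationHolomorphic2}, so for finite $p$ the corollary follows from that theorem together with Theorem \ref{Biebersau}. The gap is in the case $p=\infty$. Your primary route invokes Corollary \ref{infty=infty}, but in the paper that corollary is proved \emph{after} and \emph{by means of} the $p=\infty$ case of the present statement (the uniform convergence of $\sum_n a_n n^{-s}$ on $\mathbb{C}_\varepsilon$ is extracted there precisely from the $\mathcal{H}_\infty(X)$-convergence of the partial sums of $D_\varepsilon$ via \eqref{EQUA:supremumNormSeries}); as it stands your argument is therefore circular, and no independent proof of $\mathcal{H}^+_\infty(X)\subseteq\mathfrak{D}_\infty(X)$ is available at this point. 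Your fallback --- a $p=\infty$ analogue of Theorem \ref{Theo:representationHolomorphic2} obtained ``verbatim'' --- does not go through either: that theorem rests on Proposition \ref{Prop:representationHolomorphic}, whose proof approximates $D$ by Dirichlet polynomials in $\mathcal{H}_p(X)$, and this density fails for $p=\infty$ (trigonometric polynomials are not dense in $H_\infty(\mathbb{T}^{\mathbb{N}},X)$), which is exactly why the paper states both results only for $p<\infty$.

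Your diagnosis that one ``has to leave the $L_p$-scale'' is also not accurate: the $\log N$ loss you dismiss \emph{is} enough once it is combined with Abel summation, and this is what the paper does, uniformly for all $1\le p\le\infty$. By partial summation,
\[
\sum_{n=N}^{M}\frac{a_n}{n^{\varepsilon+s}}=\sum_{n=N}^{M-1}\Big(\sum_{k\le n}\frac{a_k}{k^s}\Big)\Big(\frac{1}{n^\varepsilon}-\frac{1}{(n+1)^\varepsilon}\Big)+\Big(\sum_{k\le M}\frac{a_k}{k^s}\Big)\frac{1}{M^\varepsilon}-\Big(\sum_{k\le N-1}\frac{a_k}{k^s}\Big)\frac{1}{N^\varepsilon},
\]
and applying the bound $\bigl\|\sum_{n\le x}a_n n^{-s}\bigr\|_{\mathcal{H}^+_p(X)}\le C\log x\,\|D\|_{\mathcal{H}^+_p(X)}$ of Theorem \ref{Theo:PerronPartialSum} (which \emph{is} established for $p=\infty$, via Lemma \ref{Lemm:pnormsLimit}), together with $\frac{1}{n^\varepsilon}-\frac{1}{(n+1)^\varepsilon}=O(n^{-1-\varepsilon})$ and the convergence of $\sum_n \log(n)\,n^{-1-\varepsilon}$, shows that the partial sums of $D_\varepsilon$ are Cauchy in $\mathcal{H}_p(X)$; the limit is $D_\varepsilon$ by uniqueness of coefficients. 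To repair your proof, either adopt this self-contained argument for $p=\infty$ (it needs neither Corollary \ref{infty=infty} nor an $\infty$-version of Theorem \ref{Theo:representationHolomorphic2}), or first prove Corollary \ref{infty=infty} by some independent means, which the paper does not provide.
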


\begin{proof}
Using Abel's summation formula we have that for every $1 < N < M$
\[ \sum_{n=N}^{M}{\frac{a_n}{n^{\varepsilon + s}}} = \sum_{n=N}^{M-1}{\Big( \sum_{k=1}^{n}{\frac{a_{k}}{k^{s}}}\Big) \Big( \frac{1}{n^{\varepsilon}} - \frac{1}{(n+1)^{\varepsilon}} \Big)} + \Big( \sum_{k=1}^{M}{\frac{a_{k}}{k^{s}}}\Big) \frac{1}{M^{\varepsilon}} - \Big( \sum_{k=1}^{N-1}{\frac{a_k}{k^{s}}} \Big) \frac{1}{N^{\varepsilon}}. \]
Taking norms and using Theorem \ref{Theo:PerronPartialSum} we conclude that
\[
\begin{split}
\left\| \sum_{n=N}^{M}{\frac{a_n}{n^{\varepsilon + s}}} \right\|_{\mathcal{H}^{+}_p(X)} & \leq C \sum_{n=N}^{M-1}{\log{(n)} \Big( \frac{1}{n^{\varepsilon}} - \frac{1}{(n+1)^{\varepsilon}} \Big)} + C \frac{\log{(N-1)}}{N^{\varepsilon}} + C \frac{\log{M}}{M^{\varepsilon}}\\
& \leq C \Big( \sum_{n=N}^{M-1}{\log{(n)} \frac{1}{n^{\varepsilon + 1}}} + \frac{\log{(N-1)}}{N^{\varepsilon}} + \frac{\log{M}}{M^{\varepsilon}}\Big).
\end{split}
\]
It follows that the partial sums $\sum_{n=1}^{N}{\frac{a_n}{n^{\varepsilon}} n^{-s}}$ form a Cauchy sequence in $\mathcal{H}_{p}(X)$, so they converge to a Dirichlet series that must be $D_\varepsilon$ by the uniqueness of the coefficients.
\end{proof}

\begin{Rema}
\label{Rema:proofBiebersau}
The argument in the proof of Corollary \ref{Coro:HpUniformAbcissa} can be adapted to give a proof of Theorem \ref{Biebersau} using Theorem \ref{old}.
\end{Rema}

\noindent
As promised in  the discussion after \eqref{note} we now prove the following equality.

\begin{Coro}
\label{infty=infty}
The following equality holds isometrically:
\[
\mathcal{H}^+_\infty(X) =\mathfrak{D}_\infty(X).
\]
\end{Coro}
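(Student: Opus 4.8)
The plan is to establish both inclusions isometrically. One direction, namely $\mathfrak{D}_\infty(X) \subset \mathcal{H}^+_\infty(X)$ with equality of norms, has already been carried out in the discussion following \eqref{note}, so the real content is the reverse inclusion $\mathcal{H}^+_\infty(X) \subset \mathfrak{D}_\infty(X)$ together with the reverse norm inequality $\|D\|_{\mathfrak{D}_\infty(X)} \le \|D\|_{\mathcal{H}^+_\infty(X)}$. So let $D = \sum_n a_n n^{-s} \in \mathcal{H}^+_\infty(X)$ be given; I must produce a bounded holomorphic limit function on $\mathbb{C}_0$ and control its sup-norm.

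First I would fix $\varepsilon > 0$ and work with $D_\varepsilon \in \mathcal{H}_\infty(X)$. By Corollary \ref{Coro:HpUniformAbcissa} (applied with $p = \infty$), the partial sums $\sum_{n \le N} (a_n n^{-\varepsilon}) n^{-s}$ converge to $D_\varepsilon$ in $\mathcal{H}_\infty(X)$. Now I invoke the already-known scalar-type identity recorded in \eqref{EQUA:supremumNormSeries}: for a Dirichlet polynomial $P = \sum_{n \le N} b_n n^{-s}$ one has $\|P\|_{\mathcal{H}_\infty(X)} = \sup_{\Real(s) \ge 0} \|P(s)\|_X = \|P\|_{\mathfrak{D}_\infty(X)}$. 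Hence the partial sums of $D_\varepsilon$ form a Cauchy sequence in $\mathfrak{D}_\infty(X)$ as well, and (exactly as in the argument after \eqref{note}) their limit, being the one with the correct coefficients, must be the limit function of the Dirichlet series $D_\varepsilon$; so $D_\varepsilon \in \mathfrak{D}_\infty(X)$ with
\[
\|D_\varepsilon\|_{\mathfrak{D}_\infty(X)} = \|D_\varepsilon\|_{\mathcal{H}_\infty(X)} \le \|D\|_{\mathcal{H}^+_\infty(X)}.
\]
In particular $D_\varepsilon$ defines a holomorphic function on $\mathbb{C}_0$ with $\sup_{s \in \mathbb{C}_0}\|D_\varepsilon(s)\|_X \le \|D\|_{\mathcal{H}^+_\infty(X)}$, uniformly in $\varepsilon$.

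The second step is to pass to the limit $\varepsilon \to 0^+$. For $\Real(s) > 1$ everything converges absolutely, so $D_\varepsilon(s) \to D(s)$ pointwise on $\mathbb{C}_1$ as $\varepsilon \to 0^+$, where $D(s) := \sum_n a_n n^{-s}$ is the honest (absolutely convergent) sum there. Since the family $\{D_\varepsilon\}_{0 < \varepsilon < 1}$ is uniformly bounded on $\mathbb{C}_0$ by $\|D\|_{\mathcal{H}^+_\infty(X)}$ and is holomorphic there, a normal-families / Vitali argument (valid for $X$-valued holomorphic functions: a locally bounded net converging pointwise on a set with an accumulation point converges locally uniformly) shows $D_\varepsilon \to G$ locally uniformly on $\mathbb{C}_0$ for some bounded holomorphic $G : \mathbb{C}_0 \to X$ with $\|G\|_\infty \le \|D\|_{\mathcal{H}^+_\infty(X)}$, and $G = D(s)$ on $\mathbb{C}_1$. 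Thus $G$ is a bounded holomorphic extension to $\mathbb{C}_0$ of the limit function of $\sum_n a_n n^{-s}$, which is precisely what it means for $\sum_n a_n n^{-s}$ to belong to $\mathfrak{D}_\infty(X)$; and then $\|D\|_{\mathfrak{D}_\infty(X)} = \|G\|_\infty \le \|D\|_{\mathcal{H}^+_\infty(X)}$. Combined with the isometric inclusion $\mathfrak{D}_\infty(X) \subset \mathcal{H}^+_\infty(X)$ from \eqref{note}, this finishes the proof.

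The step I expect to be the main obstacle is the passage to the limit $\varepsilon \to 0^+$: one needs to know that uniform boundedness of the holomorphic functions $D_\varepsilon$ on $\mathbb{C}_0$, together with convergence on $\mathbb{C}_1$, forces convergence of $D_\varepsilon$ to a genuine holomorphic function on all of $\mathbb{C}_0$ and not merely a weak/coefficientwise limit. The cleanest route is probably to avoid normal families entirely and argue coefficientwise plus Theorem \ref{Biebersau}: the uniform bound gives $D \in \mathcal{H}^+_\infty(X)$ in the first place, and one can instead show directly, via Corollary \ref{Coro:HpUniformAbcissa} and the inequality $\|\sum_{n \le N} a_n n^{-s}\|_{\mathfrak{D}_\infty(X)} \le \|\sum_{n\le N} a_n n^{-s}\|_{\mathcal{H}_\infty(X)} = \|\sum_{n\le N} a_n n^{-s}\|_{\mathcal{H}^+_\infty(X)} \le C\log N\,\|D\|_{\mathcal{H}^+_\infty(X)}$ (Theorem \ref{Theo:PerronPartialSum}), that the original partial sums of $D$ converge uniformly on $\mathbb{C}_\delta$ for every $\delta > 0$ by the same Abel-summation argument as in Corollary \ref{Coro:HpUniformAbcissa} — which yields at once that $D \in \mathfrak{D}_\infty(X)$ with the right norm bound, sidestepping the need for a separate Vitali-type theorem.
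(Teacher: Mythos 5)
Your proposal is correct, but your primary route differs from the paper's, while the ``cleaner route'' you sketch in your last paragraph is essentially the paper's actual argument. The paper does not establish $D_\varepsilon \in \mathfrak{D}_\infty(X)$ for each fixed $\varepsilon$ and then pass to the limit $\varepsilon \to 0^+$: it observes that convergence of the partial sums of $D_\varepsilon$ in $\mathcal{H}_\infty(X)$ (Corollary \ref{Coro:HpUniformAbcissa}) combined with the polynomial identity \eqref{EQUA:supremumNormSeries} \emph{already is} uniform convergence of the original series $\sum_n a_n n^{-s}$ on $\mathbb{C}_\varepsilon$, for every $\varepsilon>0$; hence the limit function is holomorphic on $\mathbb{C}_0$ and $\sup_{\Real(s)>0}\|D(s)\|_X = \sup_{\varepsilon>0}\|D_\varepsilon\|_{\mathcal{H}_\infty(X)} = \|D\|_{\mathcal{H}^+_\infty(X)}$, with no separate limiting procedure in $\varepsilon$ and no compactness-type theorem. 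Your first route does work, but be aware of two points. First, the ``normal families / Vitali'' step needs care: Montel's theorem fails for Banach-space-valued functions (bounded sets are not relatively compact), so you cannot literally argue by normal families; what you need is the vector-valued Vitali convergence theorem (locally bounded, pointwise convergent on a set with a limit point implies locally uniformly convergent), which is true (Arendt--Nikolski) but is a nontrivial external ingredient the paper avoids. Second, producing a bounded holomorphic $G$ on $\mathbb{C}_0$ agreeing with the sum on $\mathbb{C}_1$ is, by the letter of the definition of $\mathfrak{D}_\infty(X)$, not yet membership: you must still invoke Theorem \ref{Biebersau} to get convergence of the Dirichlet series on all of $\mathbb{C}_0$ with limit $G$ (you do cite it in the alternative, and the paper uses the same identification elsewhere, e.g.\ in Theorem \ref{Theo:representationHolomorphic2}). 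What your Vitali route buys is a statement of independent interest (each $D_\varepsilon$ lies in $\mathfrak{D}_\infty(X)$ uniformly and these converge locally uniformly); what the paper's route buys is brevity and self-containedness, since everything reduces to the already-proved approximation results.
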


\begin{proof}
We already know from \eqref{note} that  we have the inclusion $\mathfrak{D}_\infty(X) \subset \mathcal{H}^+_\infty(X)$. Conversely, take $D = \sum_{n}{a_{n}n^{-s}} \in \mathcal{H}^+_\infty(X)$. By Corollary \ref{Coro:HpUniformAbcissa} and \eqref{EQUA:supremumNormSeries} we have that $\sum_{n}{a_{n}n^{-s}}$ converges uniformly on $\mathbb{C}_{\varepsilon}$ for each $\varepsilon > 0$, and so the limit function $D(s)$ is holomorphic on $\mathbb{C}_{0}$ and satisfies
\[
\|D\|_{\mathfrak{D}_\infty(X)} = \sup_{\Real{(s)} > 0}{\| D(s)\|_{X}} = \sup_{\varepsilon >0} \|D_\varepsilon\|_{\mathcal{H}_{\infty}(X)} = \|D\|_{\mathcal{H}^+_\infty(X)}\,.
\]
Therefore, $D \in \mathfrak{D}_{\infty}(X)$ with norm equal to $\|D\|_{\mathcal{H}^+_\infty(X)}$.
\end{proof}

\section{Operators}
\label{sect:operatorSpaces}

The following characterization of cone absolutely summing operators will be useful and can be found in \cite[Theorem IV.3.3]{ScaheferBanachLattice}.

\begin{Lemm}
\label{Lemm:equivPositAbsSum}
Let $E$ be a Banach lattice and $T \in \mathcal{L}(E,X)$. The following statements are equivalent:
\begin{enumerate}
\item[(i)] $T$ is cone absolutely summing with constant $C$.
\item[(ii)] There is $\xi^\ast \in E^\ast, \| \xi^\ast\| = C$  such that $\|T(\xi)\|_{X} \leq \xi^\ast(|\xi|)$ for every $\xi \in E$.
\end{enumerate}
\end{Lemm}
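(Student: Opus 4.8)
The statement is classical (it is essentially \cite[Theorem IV.3.3]{ScaheferBanachLattice}); I would argue as follows. The implication (ii) $\Rightarrow$ (i) is routine: given positive $f_{1},\ldots,f_{n}\in E$, summing the inequalities $\|Tf_{i}\|_{X}\le\xi^{\ast}(|f_{i}|)=\xi^{\ast}(f_{i})$ gives
\[
\sum_{i=1}^{n}\|Tf_{i}\|_{X}\le\xi^{\ast}\Big(\sum_{i=1}^{n}f_{i}\Big)\le\|\xi^{\ast}\|\,\Big\|\sum_{i=1}^{n}f_{i}\Big\|\,;
\]
picking $\phi\in B_{E^{\ast}}$ norming $\sum_{i}f_{i}$ yields $\|\sum_{i}f_{i}\|=|\phi(\sum_{i}f_{i})|\le\sum_{i}|\phi(f_{i})|\le\sup_{\psi\in B_{E^{\ast}}}\sum_{i}|\psi(f_{i})|$, so $T$ is cone absolutely summing with constant $\|\xi^{\ast}\|=C$.

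For (i) $\Rightarrow$ (ii), first observe that for positive $f_{i}$ one has $|\sum_{i}\epsilon_{i}f_{i}|\le\sum_{i}f_{i}$ for every choice of signs $\epsilon_{i}\in\{-1,1\}$, hence $\sup_{\phi\in B_{E^{\ast}}}\sum_{i}|\phi(f_{i})|=\|\sum_{i}f_{i}\|$; so the cone absolutely summing condition with constant $C$ reads $\sum_{i=1}^{n}\|Tf_{i}\|_{X}\le C\,\|\sum_{i=1}^{n}f_{i}\|$ for all finite families in $E_{+}$. I then define, for $f\in E_{+}$,
\[
\nu(f):=\sup\Big\{\sum_{i=1}^{n}\|Tf_{i}\|_{X}\ :\ n\in\mathbb{N},\ f_{i}\in E_{+},\ \textstyle\sum_{i=1}^{n}f_{i}=f\Big\}\,,
\]
so that $\|Tf\|_{X}\le\nu(f)\le C\|f\|$. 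The crucial point is that $\nu$ is positively homogeneous and additive on the cone $E_{+}$: homogeneity and superadditivity follow at once by concatenating decompositions, while subadditivity is exactly where the lattice structure enters — by the Riesz decomposition property any representation $\sum_{i}h_{i}=f+g$ with $h_{i}\in E_{+}$ refines to $h_{i}=h_{i}'+h_{i}''$ with $h_{i}',h_{i}''\in E_{+}$, $\sum_{i}h_{i}'=f$, $\sum_{i}h_{i}''=g$, and then $\sum_{i}\|Th_{i}\|_{X}\le\sum_{i}\|Th_{i}'\|_{X}+\sum_{i}\|Th_{i}''\|_{X}\le\nu(f)+\nu(g)$.

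An additive, positively homogeneous function on $E_{+}$ extends uniquely to a positive linear functional $\xi^{\ast}$ on $E=E_{+}-E_{+}$ by $\xi^{\ast}(f):=\nu(f^{+})-\nu(f^{-})$; positivity gives $|\xi^{\ast}(f)|\le\xi^{\ast}(|f|)=\nu(|f|)\le C\|f\|$, so $\xi^{\ast}\in E^{\ast}$ with $\|\xi^{\ast}\|\le C$. Finally, for arbitrary $\xi\in E$, since $|\xi|=\xi^{+}+\xi^{-}$ is itself a decomposition into two positive summands,
\[
\|T\xi\|_{X}\le\|T\xi^{+}\|_{X}+\|T\xi^{-}\|_{X}\le\nu(|\xi|)=\xi^{\ast}(|\xi|)\,,
\]
which is (ii); applying the already proven implication (ii) $\Rightarrow$ (i) to this $\xi^{\ast}$ forces $C\le\|\xi^{\ast}\|$, hence $\|\xi^{\ast}\|=C$. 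The one genuinely nontrivial step is the additivity of $\nu$, i.e.\ the use of the Riesz decomposition property of the vector lattice $E$; everything else is bookkeeping with suprema and the standard extension of an additive positively homogeneous map on a cone to a linear functional.
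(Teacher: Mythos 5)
The paper itself does not prove this lemma; it simply cites \cite[Theorem IV.3.3]{ScaheferBanachLattice}, and your argument is a faithful reconstruction of that classical proof: reformulate the cone absolutely summing condition as $\sum_i\|Tf_i\|\le C\,\|\sum_i f_i\|$ on the positive cone, define the ``variation'' $\nu$, prove additivity via the Riesz decomposition property, and extend \`a la Kantorovich to a positive functional. For a \emph{real} Banach lattice every step you write is correct, including the identification of additivity of $\nu$ as the crux.

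The one genuine gap concerns complex scalars, which is the setting the paper actually needs ($X$ is complex and $E=E_{p^\ast}(\IDT)$ is a complex Banach lattice). Two of your steps silently assume real scalars: the identity $\sup_{\phi\in B_{E^\ast}}\sum_i|\phi(f_i)|=\|\sum_i f_i\|$ is argued with signs $\epsilon_i\in\{-1,1\}$ (harmless: replace them by unimodular $\epsilon_i$ with $\epsilon_i\phi(f_i)=|\phi(f_i)|$ and use $|\sum_i\epsilon_i f_i|\le\sum_i f_i$), but more seriously the final step ``$\|T\xi\|\le\|T\xi^+\|+\|T\xi^-\|\le\nu(|\xi|)$'' has no meaning for non-real $\xi$, and the naive substitute via $\xi=\Real\xi+i\,\mathrm{Im}\,\xi$ only yields $\|T\xi\|\le\nu(|\Real\xi|+|\mathrm{Im}\,\xi|)\le 2\,\xi^\ast(|\xi|)$, i.e.\ it loses the exact constant. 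To close this, note that $\nu$ is monotone on $E_+$ and work inside the principal ideal generated by $|\xi|$, which by Kakutani's theorem is an AM-space $C(K)$: there $\xi$ can be approximated, uniformly relative to the order unit $|\xi|$ (hence in norm), by finite sums $\sum_k c_k u_k$ with $|c_k|\le 1$, $u_k\ge 0$ disjoint and $\sum_k u_k\le|\xi|$, whence $\|T\xi\|\le\sum_k\|Tu_k\|\le\nu(|\xi|)=\xi^\ast(|\xi|)$; in the concrete lattices $L_{p^\ast}(\IDT)$ and $C(\IDT)$ used in the paper this is just approximation by simple functions or partitions of unity. A last, minor point: your closing argument that $\|\xi^\ast\|=C$ (via applying (ii)$\Rightarrow$(i)) is only valid if $C$ is the \emph{optimal} constant in (i), i.e.\ $C=\|T\|_\Lambda$; for an arbitrary admissible constant one gets $\|\xi^\ast\|\le C$, which is all that is used in the paper, and equality can then be arranged by enlarging $\xi^\ast$ by a positive multiple of a norm-one positive functional.
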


We turn our attention now on the Banach lattices $E_{p^\ast}(\IDT)$ and the spaces $\Lambda^{+}{\left(E_{p^\ast}(\IDT), X\right)}$ described in the introduction. Recall that every $T$ belonging to $\Lambda^{+}{\left(E_{p^\ast}(\IDT), X\right)}$ is uniquely determined by its Fourier series \eqref{equa:FourierSeriesOperator}, a formal power series in infinitely variables, which in turn corresponds to a Dirichlet series using Bohr transform.

\begin{Theo}
\label{Theo:DirichletOperators}
Let $1 \leq p \leq \infty$. The Bohr transform $\BB$ defines a linear  onto  isometry
 \[
 \Lambda^{+}\left(E_{p^\ast}(\mathbb{T}^\mathbb{N}),X\right) \equiv \mathcal{H}^{+}_{p}(X)\,.
 \]
\end{Theo}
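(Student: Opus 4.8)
The plan is to produce, for each $1 \le p \le \infty$, two norm-decreasing linear maps between $\Lambda^{+}(E_{p^\ast}(\IDT),X)$ and $\mathcal{H}^{+}_{p}(X)$ that are mutually inverse on the dense subspace of Dirichlet polynomials (equivalently, of operators with finite Fourier series); since both sides are complete, this forces an onto isometry. The bridge in both directions is the observation that a Dirichlet series $D$ and its translates $D_\varepsilon$ correspond under Bohr's transform to formal power series, and $\mathcal{H}^{+}_{p}(X)$ measures $\sup_{\varepsilon>0}\|D_\varepsilon\|_{\mathcal{H}_p(X)} = \sup_{\varepsilon>0}\|\cdot\|_{H_p(\IDT,X)}$, while on the operator side the relevant quantity is controlled via Lemma~\ref{Lemm:equivPositAbsSum}(ii) by a single functional $\xi^\ast \in E_{p^\ast}(\IDT)^\ast$.

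First I would go from operators to Dirichlet series. Given $T \in \Lambda^{+}(E_{p^\ast}(\IDT),X)$, define $a_{\primes^\alpha} := T(\omega^{-\alpha})$ and let $D = \BB^{-1}$ of the power series \eqref{equa:FourierSeriesOperator}. To see $D \in \mathcal{H}^{+}_{p}(X)$, fix $\varepsilon>0$; the Dirichlet polynomials $\sum_{n\le N}a_n n^{-\varepsilon} n^{-s}$ correspond to trigonometric polynomials $g_N = \sum_{1\le \primes^\alpha \le N} (\primes^\alpha)^{-\varepsilon} T(\omega^{-\alpha}) \omega^\alpha \in H_p(\IDT,X)$, and one checks that $g_N = T(k_{N,\varepsilon} \ast \cdot)$ for a suitable (scalar) trigonometric polynomial/kernel $k_{N,\varepsilon}$ on $\IDT$; using the representation of the $\mathcal{H}_p$-norm in \eqref{EQUA:pNormSeries} together with Lemma~\ref{Lemm:equivPositAbsSum}(ii), the $\mathcal{H}_p(X)$-norm of this polynomial is dominated by $\|T\|_\Lambda$ times the $E_{p^\ast}(\IDT)$-norm of a $1$-summing-type expression in the $k_{N,\varepsilon}$, which stays bounded by $\|T\|_\Lambda$ uniformly in $N$ and $\varepsilon$ (for $\varepsilon>0$ the kernels act like an approximate identity composed with a Poisson-type factor, keeping the scalar norms $\le 1$). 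Then Corollary~\ref{Coro:HpUniformAbcissa}'s Cauchy argument gives convergence to $D_\varepsilon \in \mathcal{H}_p(X)$ with $\|D_\varepsilon\|_{\mathcal{H}_p(X)} \le \|T\|_\Lambda$, hence $\|D\|_{\mathcal{H}^{+}_{p}(X)} \le \|T\|_\Lambda$.

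Conversely, given $D \in \mathcal{H}^{+}_{p}(X)$, I would build $T$ by prescribing $T(\omega^{-\alpha}) := a_{\primes^\alpha}$ and extending. For $1 \le p < \infty$, use that $D_\varepsilon \in \mathcal{H}_p(X) \equiv H_p(\IDT,X) \subset L_p(\IDT,X)$; pairing with a trigonometric polynomial in $E_{p^\ast}(\IDT)$ defines an operator $T_\varepsilon$ into $X$ with the right Fourier coefficients (up to the $\primes^\alpha)^{-\varepsilon}$ factors), and the uniform bound $\sup_\varepsilon \|D_\varepsilon\|_{\mathcal{H}_p(X)} = \|D\|_{\mathcal{H}^{+}_{p}(X)}$ lets one pass to a limit as $\varepsilon \to 0^+$ (weak-$\ast$ / pointwise on polynomials) to obtain $T \in \mathcal{L}^{+}(E_{p^\ast}(\IDT),X)$. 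That $T$ is in fact cone absolutely summing with $\|T\|_\Lambda \le \|D\|_{\mathcal{H}^{+}_{p}(X)}$ follows from Lemma~\ref{Lemm:equivPositAbsSum}: the estimate $\|T(\xi)\|_X \le \xi^\ast(|\xi|)$ is verified on trigonometric polynomials $\xi$ using \eqref{equa:BlascoRepresentation} and the structure of $H_p(\IDT,X)$ (for $p=1$ every bounded operator on $E_1$ is cone absolutely summing, while for $p^\ast = \infty$ one invokes the $1$-summing description), then extended by density. For $p = \infty$ one instead invokes Corollary~\ref{infty=infty} ($\mathcal{H}^{+}_\infty(X) = \mathfrak{D}_\infty(X)$) together with $\mathcal{H}_\infty(\IDT,X) = H_\infty(\IDT,X)$ and the second line of \eqref{equa:BlascoRepresentation}, or else deduces the $p=\infty$ case by a limiting argument $p \to \infty$ paralleling Lemma~\ref{Lemm:pnormsLimit}.

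I expect the main obstacle to be the quantitative matching of the two norms — concretely, showing that the scalar kernels $k_{N,\varepsilon}$ (or the extracted functional $\xi^\ast$) genuinely have the norm one needs in $E_{p^\ast}(\IDT)$ uniformly in $N$ and $\varepsilon$, so that neither composite map inflates the norm. This is where the precise form of the Bohr correspondence between the $\mathcal{H}_p$-norm formula \eqref{EQUA:pNormSeries} and the cone-absolutely-summing characterization \eqref{equa:BlascoRepresentation}, \eqref{equa:positive1SummingOperatorsCondition} must be exploited carefully; the density reduction to Dirichlet polynomials (guaranteed for $p < \infty$ by the remarks after \eqref{note} and handled for $p = \infty$ via the $\varepsilon$-translates and Corollary~\ref{Coro:HpUniformAbcissa}) is what makes the bookkeeping tractable, but checking the uniform control is the crux.
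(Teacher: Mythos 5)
Your converse direction (from $D \in \mathcal{H}^{+}_{p}(X)$ to an operator) is essentially the paper's argument: pair the boundary functions $f_\varepsilon = \BB^{-1}(D_\varepsilon)$ against trigonometric polynomials, take a weak$^\ast$-cluster point of the bounded family $\{\|f_\varepsilon(\cdot)\|_X\}$ in $E_{p^\ast}(\IDT)^\ast$, and invoke Lemma \ref{Lemm:equivPositAbsSum}; apart from a mix-up about which endpoint of \eqref{equa:BlascoRepresentation} corresponds to which lattice (for $\mathcal{H}^+_1$ the domain is $E_\infty(\IDT)$ and $\Lambda=\Pi_1$, for $\mathcal{H}^+_\infty$ it is $E_1(\IDT)$ and $\Lambda=\mathcal{L}$, and no separate $p=\infty$ argument is needed), this half is fine. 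The genuine gap is in the direction from $T\in\Lambda^{+}(E_{p^\ast}(\IDT),X)$ to $\mathcal{H}^{+}_{p}(X)$. You claim the sharply truncated objects $g_N=\sum_{1\le\primes^\alpha\le N}(\primes^\alpha)^{-\varepsilon}T(\omega^{-\alpha})\omega^\alpha$ are uniformly bounded by $\|T\|_{\Lambda}$ because the kernels $k_{N,\varepsilon}$ ``keep scalar norms $\le 1$''. They do not: a one-sided, sharply truncated kernel with coefficients $(\primes^\alpha)^{-\varepsilon}$ is not a positive function, and Lemma \ref{Lemm:equivPositAbsSum}(ii) only controls $\|T(k_{N,\varepsilon}(\cdot,z))\|_X$ by $\xi^\ast(|k_{N,\varepsilon}(\cdot,z)|)$, whose size grows (logarithmically in $N$) with the $L_1$-mass of $|k_{N,\varepsilon}|$; indeed uniform boundedness of sharp partial sums is false in $\mathcal{H}_1$ and $\mathcal{H}_\infty$, so no choice of such kernels can give the bound you assert. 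There is also a circularity: Corollary \ref{Coro:HpUniformAbcissa}, which you invoke to pass to the limit, presupposes $D\in\mathcal{H}^{+}_{p}(X)$ — exactly what is being proved.

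The missing idea is the paper's use of the genuine two-sided Poisson kernel in finitely many variables at radii $\primes_n^{-\varepsilon}$: one sets $f_{\varepsilon,m}(z)=T\big(\prod_{n\le m}K(\omega_n,\primes_n^{-\varepsilon}z_n)\big)$ as in \eqref{equa:coordinateRestrictedFunction}. The negative frequencies of $K$ are annihilated because $T\in\mathcal{L}^{+}$, and since $K\ge 0$ the domination functional (function $g$ for $1<p\le\infty$, measure $\mu$ for $p=1$) combined with the contractivity \eqref{four} of Poisson convolution yields $\|f_{\varepsilon,m}\|_{H_p(\mathbb{T}^m,X)}\le\|T\|_{\Lambda}$ with constant exactly one — no truncation, no loss. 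From this the $C\log N$ bound \eqref{equa:perronOperator} for partial sums follows via Theorem \ref{Theo:PerronPartialSum}, the Abel-summation argument then gives $D_\varepsilon\in\mathcal{H}_p(X)$, and the norm bound $\|D_\varepsilon\|_{\mathcal{H}_p(X)}\le\|T\|_{\Lambda}$ is recovered by letting $m\to\infty$ through Lemma \ref{immer schon}, not from the partial sums themselves. Finally, your framing via ``mutually inverse maps on the dense subspace of Dirichlet polynomials'' cannot carry the surjectivity: the closure of the Dirichlet polynomials in $\mathcal{H}^{+}_{p}(X)$ is $\mathcal{H}_{p}(X)$, which is a proper subspace exactly when $X$ fails the ARNP (Theorem \ref{Theo:ARNPimpliesOnto}), and likewise operators with finite Fourier series are not $\|\cdot\|_\Lambda$-dense in $\Lambda^{+}(E_{p^\ast}(\IDT),X)$; both directions must therefore be genuine constructions, as in the paper.
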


Before we start the proof let us recall the definition of the Poisson kernel in $m$ dimensions.
 For $\omega,z \in \mathbb{T}$ and $0<r<1$
\[
K(\omega, r z)  = \frac{|\omega|^{2} - r^2| z^2|}{|\omega - r z|^2}
=\sum_{n \in \mathbb{Z}}  \omega^{-n} z^n r^{|n|}\,,
\]
and for $\omega,z \in \mathbb{T}^m$ and $r= (r_n)_{n=1}^m \in [0,1[^m$
\begin{align}\label{one}
K(\omega, r z)  = \prod_{n=1}^m K(\omega_n, r_n z_n)
=\sum_{\alpha \in \mathbb{Z}^m}  \omega^{-\alpha} z^\alpha r^{|\alpha|}\,,
\end{align}
where $r^{|\alpha|} := r_1^{|\alpha_1|} \cdot \ldots \cdot r_m^{|\alpha_m|}$.
Moreover, for $1 \leq p \leq \infty$, every function $f \in L_p(\mathbb{T}^m)$ and every $r= (r_n)_{n=1}^m \in [0,1[^m$
the convolution
\[
F_r(z) = \int_{\mathbb{T}^m} f(\omega)  K(\omega, r z)  d\omega\,,\,\, z \in \mathbb{T}^m
\]
again belongs to $L_p(\mathbb{T}^m)$, and
\begin{align}\label{four}
\|F_r\|_{L_p(\mathbb{T}^m)} \leq \|f\|_{L_p(\mathbb{T}^m)}\,;
\end{align}
for $p=1$ we may replace $f(\omega) d \omega$ by a complex Borel measure $m$ on $\mathbb{T}^m$.
We refer to \cite[Chapter 2]{RudinPolydisks} where a complete study of the  properties of the Poisson  kernel
in $m$ dimensions is carried out.

\begin{Lemm} \label{immer schon}
Let $1 \leq p \leq + \infty$. For $m \in \mathbb{N}$ and $f \in E_p(\mathbb{T}^{\mathbb{N}},X)$ let
\[
f_m(\omega) = \int_{\mathbb{T}^{\mathbb{N}}} f(\omega, \tilde{\omega}) d\tilde{\omega}\,,\,\, \omega\in \mathbb{T}^m\,.
\]
Then $f_{m} \in E_p(\mathbb{T}^m,X)$ and moreover
\begin{itemize}
\item[(i)]
$\|f_m\|_{E_p(\mathbb{T}^m,X)} \leq \|f\|_{E_p(\mathbb{T}^{\mathbb{N}},X)}$
\vspace{1mm}
\item[(ii)]
$\displaystyle \lim_{m \rightarrow \infty}f_m = f $ in $E_p(\mathbb{T}^\N,X)$.
\end{itemize}
\end{Lemm}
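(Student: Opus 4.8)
The plan is to view $f_m$ as a conditional expectation of $f$ with respect to the sub-$\sigma$-algebra $\Sigma_m$ of the Borel sets in $\mathbb{T}^{\mathbb{N}}$ that depend only on the first $m$ coordinates (equivalently, $f_m = \mathbb{E}[f \mid \Sigma_m]$, identifying $E_p(\mathbb{T}^m,X)$ with the $\Sigma_m$-measurable functions in $E_p(\mathbb{T}^{\mathbb{N}},X)$). For the case $1 \le p < \infty$ this gives (i) immediately, since conditional expectation is a norm-one projection on $L_p(\mu,X)$ for every Banach space $X$ (Jensen's inequality for the convex function $\|\cdot\|_X^p$, applied coordinate-wise via Fubini), so one only needs to check that the integral over the tail variables $\tilde\omega$ really computes this conditional expectation — which is exactly Fubini's theorem on the product probability space $\mathbb{T}^m \times \mathbb{T}^{\mathbb{N}}$. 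For $p = \infty$, i.e. $E_\infty = C(\mathbb{T}^{\mathbb{N}},X)$, one argues directly: $f_m$ is continuous on $\mathbb{T}^m$ because $f$ is uniformly continuous on the compact group $\mathbb{T}^{\mathbb{N}}$, and $\|f_m\|_\infty \le \|f\|_\infty$ is clear from pulling the norm inside the integral.

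For (ii), the martingale convergence theorem does the job when $1 \le p < \infty$: the $\sigma$-algebras $\Sigma_m$ increase to the full Borel $\sigma$-algebra, so $\mathbb{E}[f\mid\Sigma_m] \to f$ both almost everywhere and in $L_p(\mathbb{T}^{\mathbb{N}},X)$-norm for every Bochner-integrable $f$ (the vector-valued martingale convergence theorem holds in $L_p$-norm without any RNP assumption, since it is really a statement about the scalar majorant $\|f - \mathbb{E}[f\mid\Sigma_m]\|_X$ combined with density of $\Sigma_m$-measurable step functions). Alternatively, and perhaps cleaner for the paper's self-containedness: approximate $f$ in $E_p(\mathbb{T}^{\mathbb{N}},X)$ by a function $g$ depending on only finitely many coordinates, say the first $m_0$; then $f_m = g$ for all $m \ge m_0$... no — rather, $\|f_m - f\|_p \le \|f_m - g_m\|_p + \|g_m - g\|_p + \|g - f\|_p$, where $g_m \to g$ because $g_m = g$ for $m \ge m_0$, and $\|f_m - g_m\|_p \le \|f-g\|_p$ by (i), so $\limsup_m \|f_m - f\|_p \le 2\|f-g\|_p$, which is arbitrarily small. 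For $p = \infty$ the same density argument works, using that finite linear combinations of functions $x\,\omega^\alpha$ (trigonometric polynomials with $X$-coefficients) are dense in $C(\mathbb{T}^{\mathbb{N}},X)$ by Stone–Weierstrass, and each such polynomial depends on finitely many coordinates and is fixed by $f \mapsto f_m$ once $m$ is large.

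The main obstacle — really the only point requiring care — is the $p = \infty$ case, where $E_\infty$ means $C(\mathbb{T}^{\mathbb{N}},X)$ rather than $L_\infty$, so the conditional-expectation/martingale machinery does not literally apply and one must instead invoke uniform continuity on the compact group together with the density of $X$-valued trigonometric polynomials; this is routine but should be spelled out since it is the case that feeds into the $\mathcal{H}^+_\infty(X)$ arguments later. I would also remark that in all cases the integral defining $f_m$ makes sense because $\omega \mapsto f(\omega,\cdot)$ is strongly measurable and the partial integral of a Bochner-integrable function is again Bochner-integrable, which is standard.
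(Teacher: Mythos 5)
Your proposal is correct and, once you set aside the optional martingale framing, it is essentially the paper's own argument: (i) follows from H\"older (Jensen) plus Fubini, and (ii) follows from the resulting uniform bound $\|P_m\|\le 1$ together with the density of ($X$-valued) trigonometric polynomials, on which $P_m$ acts as the identity for large $m$. The extra care you flag for $p=\infty$ (working in $C(\mathbb{T}^{\mathbb{N}},X)$ rather than $L_\infty$) is handled in the paper by the same density-plus-contraction scheme, so no changes are needed.
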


\begin{proof}
By H\"older's inequality and Fubini's theorem we see that (i) holds. Hence the
linear operators $P_m: E_p(\mathbb{T}^\mathbb{N},X) \rightarrow E_p(\mathbb{T}^\mathbb{N},X)\,, \, P_m(f) = f_m$
are uniformly bounded by $1$, and on the dense subspace of all trigonometric polynomials they
converge to the identity pointwise whenever $m$ tends to infinity. This completes the argument.
\end{proof}

\begin{proof}[Proof of Theorem \ref{Theo:DirichletOperators}]
Our first aim is to show that the Bohr transform $\BB$ is well-defined as an operator from $\Lambda^{+}{\left(E_{p^\ast}, X\right)}$ into  $\mathcal{H}^{+}_{p}(X)$. Fix some operator $T\in \Lambda^{+}{\left(E_{p^\ast}(\mathbb{T}^{\N}), X\right)}$, and define the coefficients
\[
a_n = T(\omega^{-\alpha}) \in X\,\,\, \text{ where }\,\,\, n= \primes^ \alpha.
\]
We intend to show that the Dirichlet series $D = \sum_n a_n n^{-s}  \in \mathcal{H}^{+}_p(X)$.
Let $m \in \mathbb{N}$ and  $\varepsilon >0$, and define
\begin{equation}\label{equa:coordinateRestrictedFunction}
f_{\varepsilon, m}: \mathbb{T}^m \rightarrow X\,,\,\,\,
f_{\varepsilon, m}(z)
=
T\Big(\prod_{n=1}^{m}{K(\omega_{n}, p_{n}^{- \varepsilon}z_n)}\Big)
\end{equation}
(note that $\prod_{n=1}^{m}{K(\omega_{n}, p_{n}^{- \varepsilon}z_n)}$ is a continuous
  function in $\omega$ that belongs to $E_{p^\ast}(\mathbb{T}^\mathbb{N})$). Then for each $z \in \mathbb{T}^m$
  by \eqref{one}
\begin{equation} \label{two}
f_{\varepsilon, m}(z)
=
T\Big(
\sum_{\alpha \in \mathbb{Z}^{m}}{\frac{\omega^{-\alpha}}{(\primes^{|\alpha|})^{\varepsilon}} z^{\alpha}} \Big)
=
\sum_{\alpha \in \N_{0}^{m}}{\frac{T(\omega^{-\alpha})}{(\primes^{\alpha})^{\varepsilon}} z^{\alpha}}\,,
\end{equation}
and hence $f_{\varepsilon, m}$ defines a continuous function on $\overline{\mathbb{D}}^{m}$ which is
holomorphic on $\mathbb{D}^m$.
Next we try to control the norms of these functions in $H_p(\mathbb{T}^m,X)$, and distinguish two  cases:\\

\noindent  The case  $ 1 < p   \leq \infty$:
 Using Lemma \ref{Lemm:equivPositAbsSum}, there is $g \in L_{p}(\IDT) = E_{p^\ast}^\ast(\IDT)$ with $\| g\|_{p} = \| T\|_{\Lambda}$ and $\| T(\xi)\|_X \leq \int_{\IDT}{|\xi| g \: d \sigma}$ for every $\xi \in L_{p^\ast}(\IDT)=E_{p^\ast}(\IDT)$. This applies on \eqref{equa:coordinateRestrictedFunction} which yields for every $z \in \mathbb{T}^m$
\begin{equation}
\label{equa:coordinateRestrictedFunction2}
\| f_{\varepsilon, m}(z) \|_X \leq \int_{\mathbb{T}^{m}}{g_{m}(\omega) \prod_{n=1}^{m}{K(\omega_{n},p_{n}^{-\varepsilon} z_{n})} \: d \omega}\,,
\end{equation}
where for $\omega \in \mathbb{T}^{m}$ we define $g_{m}(\omega) := \int_{\IDT}{g(\omega,\widetilde{\omega}) \: d \widetilde{\omega}}$, which  by H\"older's inequality satisfies $\| g_{m}\|_{p} \leq \| g\|_{p}$.  Now we  apply \eqref{four} and  obtain
\begin{equation}
\label{equa:boundFunctionHp1}
\begin{split}
\|f_{\varepsilon, m} \|_{H_p(\mathbb{T}^m,X)} \leq \int_{\mathbb{T}^{m}}{|g_{m}(\omega)|^{p} \: d \omega} = \| g_m\|_{p}^{p} \leq \| g\|_{p}^{p} = \| T\|_{\Lambda}^{p}.
\end{split}
\end{equation}

  \noindent The case  $  p =1$: Again by Lemma \ref{Lemm:equivPositAbsSum} there exists a non-negative regular Borel measure $\mu \in M(\IDT) = C(\IDT)^* = E_{\infty}(\IDT)^\ast $ with $\| \mu \|=\| T\|_{\Lambda}$ satisfying $\| T(f)\|_X \leq \int_{\IDT}{|f| \: d \mu}$ for all $f \in C(\IDT)$. Using the canonical embedding $C(\mathbb{T}^{m}) \hookrightarrow C(\IDT)$, we have that the restriction of $\mu$ to $C(\mathbb{T}^{m})$ gives a non-negative regular Borel measure on $\mathbb{T}^{m}$ with $\| \mu_{m} \| \leq \| \mu \|$. Hence by  the definition from \eqref{equa:coordinateRestrictedFunction} we for every $z \in \mathbb{T}^m$ have
\[ \| f_{\varepsilon,m}(z) \|_X \leq \int_{\mathbb{T}^{m}}{\prod_{n=1}^{m}{K(\omega_{n}, p_{n}^{- \varepsilon} z_{n})} \: d \mu_{m}}\,, \]
which again by \eqref{four} leads to
\begin{equation}
\label{equa:boundFunctionHp2}
\| f_{\varepsilon,m}\|_{H_{1}(\mathbb{T}^{m}, X)} \leq \| \mu_{m} \| \leq \| \mu \| = \| T\|_{\Lambda}.
\end{equation}

\vspace{2mm}
 \noindent Let us come back to the Dirichlet series $D = \sum_n a_n n^{-s}$. The equations \eqref{equa:boundFunctionHp1} and \eqref{equa:boundFunctionHp2} show that for each $m$ and each $\varepsilon$
 \[ D_{\varepsilon, m}: = \BB(f_{\varepsilon, m}) = \sum_{\alpha \in \N_{0}^m}{\frac{a_{\primes^{\alpha}}}{(\primes^{\alpha})^{\varepsilon}} (\primes^{\alpha})^{-s}}.\]
 satisfies that
 \begin{equation} \label{five}
 \| D_{\varepsilon,m} \|_{\mathcal{H}_{p}(X)} = \| f_{\varepsilon,m}\|_{H_{p}(\mathbb{T}^{m}, X)}\leq \| T\|_{\Lambda}\,.
 \end{equation}
  Our next aim is to show that for every $1 < N \in \N$ and $1 \leq p \leq + \infty$
 \begin{equation}
 \label{equa:perronOperator}
 \left\| \sum_{n=1}^{N}{a_{n} n^{-s}} \right\|_{\mathcal{H}_p(X)} \leq C \: \log{(N)} \: \| T\|_{\Lambda},
 \end{equation}
where $C$ is the universal constant from Theorem \ref{Theo:PerronPartialSum}. Indeed, for such $N$ take $m \in \N$ big enough so that every $1 \leq n  \leq N$ belongs to $\{ \primes^{\alpha} \colon \alpha \in \N_{0}^{m} \}$. Then we deduce from \eqref{five} and the mentioned Theorem \ref{Theo:PerronPartialSum} that for each $\varepsilon > 0$
\[ \left\| \sum_{n=1}^{N}{\frac{a_{n}}{n^{\varepsilon}} n^{-s}} \right\|_{\mathcal{H}_p(X)} \leq C \: \log{(N)} \: \| D_{\varepsilon,m} \|_{\mathcal{H}_{p}(X)} \leq C \: \log{(N)} \: \| T\|_{\Lambda}\,. \]
Taking now limits when $\varepsilon \rightarrow 0^{+}$ in the previous expression, we get \eqref{equa:perronOperator}.

Reasoning as in the proof of Corollary \ref{Coro:HpUniformAbcissa}, it follows from \eqref{equa:perronOperator} that the partial sums of $D_{\varepsilon}$ converge in $\mathcal{H}_p(X)$. Hence  by the uniqueness of the coefficients  we obtain that $D_{\varepsilon}\in\mathcal{H}_p(X)$. We still have to  control the norms of these translations $D_\varepsilon$ of $D$. Using \eqref{five} and Lemma \ref{immer schon} we deduce that
 \begin{align*} \label{six}
  \|D_\varepsilon\|_{\mathcal{H}_p(X)} = \lim_m \|D_{\varepsilon,m}\|_{\mathcal{H}_p(X)}
  = \lim_m \|f_{\varepsilon,m}\|_{H_p(\mathbb{T}^m,X)}
  \leq \|T\|_{\Lambda}\,,
 \end{align*}
This means that
\[ \| D\|_{\mathcal{H}_{p}^{+}(X)} = \sup_{\varepsilon >0}\|D_\varepsilon\|_{\mathfrak{D}_p(X)} \leq \|T\|_{\Lambda}\,, \]
and so $D \in \mathcal{H}_p^+(X)$ with norm $\leq \|T\|_{\Lambda}$.
\vspace{4mm}

\noindent In a second step we have to prove that the Bohr transform $\BB$  in fact defines an isometry onto
from $\Lambda^{+}{\left(E_{p^\ast}(\mathbb{T}^\mathbb{N}), X\right)}$  into $\mathcal{H}_\infty^+(X)$, so let $D(s) = \sum_{n}{a_{n} n^{-s}} \in \mathcal{H}^{+}_{p}(X)$. For each $\varepsilon > 0$, denote $f_{\varepsilon} = \BB^{-1}(D_{\varepsilon}) \in H_{p}(\IDT,X)$. We can define $$T:\spn{\{ \omega^{\alpha} \colon \alpha \in \mathbb{Z}^{(\N)} \}} \rightarrow X$$
as the uniquely determined linear map with $T(\omega^{-\alpha}) = a_{p^{\alpha}}$ for $\alpha \in \N_{0}^{(\N)}$ and $T(\omega^{-\alpha}) = 0$ otherwise. For every trigonometric polynomial $P= \sum_{\alpha \in J}{c_{\alpha} \omega^{\alpha}}$  (here $J$ is some finite set in $\mathbb{Z}^\mathbb{N}$) we have by orthogonality
\begin{equation}
\label{equa:defineOperatorPreimage}
\begin{split}
\| T (P)\|_X & = \Big\| \int_{\IDT}{f_{\varepsilon}(\omega)\,\sum_{\alpha \in J}{c_{\alpha} \omega^{\alpha} (\primes^{|\alpha|})^{\varepsilon}} \: d \omega} \Big\|_X\\
& \leq \int_{\IDT}{ \| f_{\varepsilon}(\omega)\|_X  \Big| \sum_{\alpha \in J}{c_{\alpha} \omega^{\alpha} (\primes^{|\alpha|})^{\varepsilon}} \Big| \: d \omega}\,.
\end{split}
\end{equation}
Moreover, for  $1 < p \leq \infty$ we have that
$$\{ \|f_{\varepsilon}(\cdot)\|_X \colon \varepsilon > 0 \}
\subset
\| D\|_{\mathcal{H}_{p}^{+}(X)} \cdot B_{L_{p}(\IDT)}\,,$$
($B_{L_{p}(\IDT)}$ the closed unit ball of the dual space $L_{p}(\IDT)$ ), while for  $p=1$ we can also view this set as a bounded subset in the dual $M(\IDT) = C(\IDT)^{\ast}$. Taking in any case a weak$^\ast$-cluster point
\[ \xi^{\ast} \in \bigcap_{\delta > 0}{\overline{\{ \|f_{\varepsilon}(\cdot)\|_X \colon 0 < \varepsilon < \delta \}}^{\omega^\ast}}\]
we get $\xi^{\ast} \in E_{p^\ast}(\IDT)^\ast$ with $\| \xi^\ast \| \leq \sup_{\varepsilon > 0}{\| f_{\varepsilon}\|_{L_{p}(\IDT)}} \leq \| D\|_{\mathcal{H}^{+}_p(X)}$ that, by \eqref{equa:defineOperatorPreimage}, satisfies $\left\| T(\psi) \right\|_X \leq \xi^\ast(|\psi|)$ for every trigonometric polynomial $\psi$ on $\mathbb{T}^\mathbb{N}$. The density of these polynomials in $E_p^\ast(\IDT)$ allows us to extend $T$ to a bounded linear operator on $E_p^\ast(\IDT)$, that we for simplicity again denote by $T$, which satisfies  $\| T(\psi)\|_X \leq \xi^{\ast}(|\psi|)$ for each $\psi \in E_{p^\ast}(\IDT)$. Hence $T\in \Lambda^+(E_{p^\ast}(\IDT),X)$ with $\| T\|_{\Lambda} \leq \| \xi^\ast\|_{E_{p^\ast}(\IDT)^\ast} \leq \| D\|_{\mathcal{H}^{+}_{p}(X)}$ by Lemma \ref{Lemm:equivPositAbsSum}.
 This finishes the proof.
\end{proof}

\section{ARNP}
\label{sect:ARNP}

The isometric inclusion $\mathcal{H}_{p}(X) \hookrightarrow \mathcal{H}^{+}_{p}(X)$ ($1 \leq p \leq \infty$) shown in Theorem \ref{Rema1} does not have to be onto. To see an example consider the Dirichlet series
$$D=\sum_{n}{e_{n} n^{-s}}$$
in the Banach space $c_0$ of all null sequences, where $e_n, n \in \mathbb{N}$  as usual stands for the canonical
basis. Then $D \in \mathcal{H}^{+}_{\infty}(c_{0})$ since it obviously converges on $\mathbb{C}_0$ and $\|D(s)\|_{c_0}\leq 1$ for each $s \in \mathbb{C}_0$.
But it does not belong to $\mathcal{H}_{\infty}(c_0) \equiv H_{\infty}(\IDT,c_{0})$ since the fact that $\|e_{n}\|_{c_0} =1$ for each $n$ would contradict the Lemma of Lebesgue-Riemann. This is essentially the same example that one may give to show that the inclusion $H_{\infty}(\mathbb{T},X) \hookrightarrow H_{\infty}(\mathbb{D},X)$ (described in the introduction) is strict. In fact, both inclusions are related.

\begin{Lemm}
\label{Lemm:strongerThanARNP}
If $\mathcal{H}^{+}_{\infty}(X) = \mathcal{H}_{\infty}(X)$, then $H_{\infty}(\mathbb{T},X) \hookrightarrow H_{\infty}(\mathbb{D},X)$ is onto.
\end{Lemm}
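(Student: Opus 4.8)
The plan is to encode an arbitrary bounded holomorphic function $F\colon\mathbb{D}\to X$ as a Dirichlet series living only on the powers of the first prime $\primes_{1}=2$, then use the hypothesis $\mathcal{H}_{\infty}^{+}(X)=\mathcal{H}_{\infty}(X)$ to pull it back to $H_{\infty}(\IDT,X)$, observe that such a series corresponds to a function depending on a single variable, and finally recognise $F$ as the Poisson extension of that one-variable function.

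First I would fix $F\in H_{\infty}(\mathbb{D},X)$ with Taylor expansion $F(z)=\sum_{k\ge0}x_{k}z^{k}$; Cauchy's estimates give $\|x_{k}\|_{X}\le\|F\|_{H_{\infty}(\mathbb{D},X)}$ for all $k$, so the series converges absolutely on $\mathbb{D}$. Set $a_{n}:=x_{k}$ when $n=2^{k}$ for some $k\ge0$ and $a_{n}:=0$ otherwise, and let $D=\sum_{n}a_{n}n^{-s}=\sum_{k\ge0}x_{k}(2^{k})^{-s}$. For $\Real(s)>0$ one has $2^{-s}\in\mathbb{D}$ and the series sums to $F(2^{-s})$, so $D$ defines a bounded holomorphic function on $\mathbb{C}_{0}$ with $\|D\|_{\mathfrak{D}_{\infty}(X)}=\sup_{z\in\mathbb{D}}\|F(z)\|_{X}=\|F\|_{H_{\infty}(\mathbb{D},X)}$. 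By Corollary \ref{infty=infty} this gives $D\in\mathcal{H}_{\infty}^{+}(X)$ with the same norm, and by hypothesis therefore $D\in\mathcal{H}_{\infty}(X)$; write $h:=\BB^{-1}(D)\in H_{\infty}(\IDT,X)$. By construction the Fourier spectrum of $h$ is contained in $\{ke_{1}\colon k\ge0\}$ (the multi-indices supported on the first coordinate), with $\widehat{h}(ke_{1})=x_{k}$.

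Next I would show that $h$ is, a.e., a function of the first variable alone. Put $g(\omega_{1}):=\int_{\IDT}h(\omega_{1},\widetilde{\omega})\,d\widetilde{\omega}$; by Fubini's theorem $g\in L_{\infty}(\mathbb{T},X)$, $\widehat{g}(k)=\widehat{h}(ke_{1})$ for every $k\in\mathbb{Z}$, and $\|g\|_{L_{\infty}(\mathbb{T},X)}\le\|h\|_{L_{\infty}(\IDT,X)}$ (the argument of Lemma \ref{immer schon}(i) applies verbatim). Regarding $g$ as an element of $L_{\infty}(\IDT,X)$ depending only on $\omega_{1}$, the function $h-g$ has all Fourier coefficients equal to $0$, whence $h=g$ a.e.; in particular $\widehat{g}(k)=0$ for $k<0$, so $g\in H_{\infty}(\mathbb{T},X)$ and $\|g\|_{H_{\infty}(\mathbb{T},X)}=\|h\|_{L_{\infty}(\IDT,X)}=\|D\|_{\mathcal{H}_{\infty}(X)}=\|F\|_{H_{\infty}(\mathbb{D},X)}$.

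Finally I would verify that $F$ is precisely the image of $g$ under the Poisson embedding $H_{\infty}(\mathbb{T},X)\hookrightarrow H_{\infty}(\mathbb{D},X)$ of \eqref{equa:PoissonRepresentation}. Setting $G(z)=\int_{\mathbb{T}}g(\omega)K(\omega,z)\,d\omega$ and using $\int_{\mathbb{T}}\omega^{k}K(\omega,z)\,d\omega=z^{k}$ for $k\ge0$, one obtains $G(z)=\sum_{k\ge0}\widehat{g}(k)z^{k}=\sum_{k\ge0}x_{k}z^{k}=F(z)$ on $\mathbb{D}$. Hence $F$ lies in the range of the inclusion $H_{\infty}(\mathbb{T},X)\hookrightarrow H_{\infty}(\mathbb{D},X)$, and since $F$ was arbitrary this inclusion is onto (and, by the norm identities above, even isometrically so). The proof is essentially bookkeeping once the substitution $z=2^{-s}$ is in place; the only point requiring a little care is the identification of a function on $\IDT$ with Fourier spectrum inside $\mathbb{Z}e_{1}$ with a genuine one-variable function, together with tracking the norm through the chain $\mathcal{H}_{\infty}(X)=\mathcal{H}_{\infty}^{+}(X)=\mathfrak{D}_{\infty}(X)$.
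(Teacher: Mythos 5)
Your proposal is correct and follows essentially the same route as the paper: encode $F$ via the substitution $z=2^{-s}$ as a Dirichlet series supported on powers of $2$, use the hypothesis to obtain a function in $H_{\infty}(\IDT,X)$ whose spectrum lies in $\mathbb{N}_{0}e_{1}$, and identify it with a one-variable function whose Poisson extension is $F$. You merely spell out details the paper leaves implicit (the reduction to a function of the first variable and the Poisson-kernel verification), so there is nothing to add.
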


\begin{proof}
Let $F \in H_{\infty}(\mathbb{D},X)$, with $F(z) = \sum_{n}{c_{n} z^{n}}$. Since $s \mapsto 2^{-s}$ is a diffeomorphism from $\mathbb{C}_0$ onto $\mathbb{D}$, the Dirichlet series $D(s) = F(2^{-s}) = \sum_{n=1}^{\infty}{c_{n} 2^{-s}}$ ($s \in \mathbb{C}_0$) belongs to $\mathcal{H}^{+}_{\infty}(X)$ and $\| D\|_{\mathcal{H}^{+}_\infty} = \| F\|_\infty$. By hypothesis, $D$ must belong to $\mathcal{H}_{\infty}(X)$, which means that we can find a function $f \in H_{\infty}(\IDT, X)$ with Fourier series $f \equiv \sum_{n \in \N_{0}}{c_{n} \omega_{1}^{n}}$, so it only depends on the first variable $\omega_1$ and is the function of $H_\infty(\mathbb{T},X)$ corresponding to $F$.
\end{proof}

The previous result leads to the question of whether the converse is true. We will see that this is the case. We recall the following Banach space property which characterizes when the isometry $H_{\infty}(\mathbb{T},X) \hookrightarrow H_{\infty}(\mathbb{D},X)$ is onto:

 A complex Banach space $X$ is said to have the Analytic Radon-Nikod\'{y}m Property (ARNP) if every $F \in H_{\infty}(\mathbb{D},X)$ satisfies that
\begin{equation}
\label{EQUA:radialLimit}
\lim_{R \rightarrow 1^{-}}{F(R \omega)} \hspace{3mm} \mbox{ exists for almost every $\omega \in \mathbb{T}$. }
\end{equation}

\vspace{1mm}

This notion was firstly considered and studied by Bukhvalov and Danilevich \cite{Bukhvalov1, BukhvalovDani1}. It is a weaker property than the more known Radon-Nikod\'{y}m Property, and strictly weaker as every Banach lattice without copies of $c_{0}$ has the ARNP \cite[p.105, Theorem 1]{BukhvalovDani1}, which in particular implies that $L^{1}[0,1]$ posseses this property although it does not have the RNP.

It will be convenient for us to work in open half-spaces. We will make use of the well-known Cayley transformation, given by
\[ \varphi(z) = \frac{1+z}{1-z} \hspace{5mm} (z \in \mathbb{D}).\]
This map establishes a diffeomorphism between the open disc $\mathbb{D}$ and the right open halfplane $\mathbb{C}_0$, and also satisfies that $\varphi(e^{iu}) =i \cot{(u/2)}$ for every $u\in \R$. Notice that $g: (0, 2\pi) \rto \R$, $g(u) = \cot{(u/2)}$ is a diffeomorphism.

\begin{Obse}
\label{Obse:StoltzRegions}
If $F \in H_{\infty}(\mathbb{D}, X)$ satisfies \eqref{EQUA:radialLimit}, then one can even show that the following limit exists
\[ \lim_{z \rightarrow \omega, z \in S(\omega, \alpha)}{F(z)} \hspace{3mm} \mbox{  exists for almost every $\omega \in \mathbb{T}$},\]
where
\[ S(\omega, \alpha) := \{ z \in \mathbb{D} \colon |z - \omega| < \alpha (1 - |z|) \} \hspace{3mm} (\alpha > 1). \]
\end{Obse}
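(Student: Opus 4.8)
The plan is to reduce the statement about Stolz regions $S(\omega,\alpha)$ to the radial limit hypothesis \eqref{EQUA:radialLimit} by a classical Fatou-type argument, but carried out in the vector-valued setting. First I would recall that by hypothesis $F \in H_{\infty}(\mathbb{D},X)$, so $F$ is bounded, say $\|F(z)\|_X \leq M$ for all $z \in \mathbb{D}$, and that the radial limit $F^{\ast}(\omega) := \lim_{R \to 1^{-}} F(R\omega)$ exists for a.e.\ $\omega \in \mathbb{T}$. The goal is to upgrade radial convergence to nontangential convergence at a.e.\ point. The standard route in the scalar case passes through the Poisson integral representation and a maximal-function estimate (the nontangential maximal function is controlled by the Hardy--Littlewood maximal function of the boundary data); the subtlety here is that $X$ need not have the Radon--Nikod\'ym property, so one cannot simply invoke a vector-valued Poisson representation of $F$ by an $L_1(\mathbb{T},X)$ boundary function.

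The key idea to circumvent this is to work one functional at a time together with a separability reduction. First I would observe that it suffices to prove the claim for every $z \mapsto x^{\ast}(F(z))$ with $x^{\ast}$ ranging over a countable norm-determining subset of $B_{X^{\ast}}$: indeed, if for a fixed $\omega$ the scalar nets $x^{\ast}(F(z))$ converge as $z \to \omega$ within $S(\omega,\alpha)$ for all $x^{\ast}$ in such a countable set, and if moreover the radial limit $F^{\ast}(\omega)$ exists in the norm of $X$, then boundedness of $F$ plus a standard equicontinuity/Cauchy argument forces $F(z) \to F^{\ast}(\omega)$ in norm. To make this rigorous I would fix $\omega$ in the full-measure set where the radial limit exists, pick a sequence $z_k \to \omega$ inside $S(\omega,\alpha)$, and show $(F(z_k))_k$ is norm-Cauchy: the differences $x^{\ast}(F(z_k) - F(z_l))$ are small for each $x^{\ast}$ in the countable set, but to pass from "small against countably many functionals" to "small in norm" one uses that the scalar functions $z \mapsto x^{\ast}(F(z))$ already have nontangential limits a.e.\ equal to $x^{\ast}(F^{\ast}(\omega))$, hence the cluster points of $(F(z_k))$ in the weak topology are all equal to $F^{\ast}(\omega)$; combining with the known norm radial limit and an interpolation between radial points and $z_k$ along the Stolz region closes the gap. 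Alternatively, and perhaps more cleanly, one invokes the scalar Fatou theorem applied to $x^{\ast} \circ F$ for $x^{\ast}$ in a countable set, intersects the corresponding full-measure sets, and then argues that at such an $\omega$ the function $F$ restricted to $S(\omega,\alpha)$ is a bounded $X$-valued holomorphic function whose weak cluster points at the boundary are a single point, which by a Lindel\"of-type theorem yields the norm nontangential limit.

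Concretely, the steps in order would be: (1) reduce to a countable norm-determining family $\{x_k^{\ast}\} \subset B_{X^{\ast}}$; (2) for each $k$ apply the classical scalar Fatou theorem (radial limits of bounded analytic functions on $\mathbb{D}$ imply nontangential limits a.e.) to the bounded holomorphic function $x_k^{\ast} \circ F$, obtaining a full-measure set $E_k \subset \mathbb{T}$; (3) let $E = \bigcap_k E_k$ intersected with the set where the norm radial limit $F^{\ast}$ exists, so $E$ still has full measure; (4) fix $\omega \in E$ and any sequence $z_j \to \omega$ within $S(\omega,\alpha)$, and show $x_k^{\ast}(F(z_j)) \to x_k^{\ast}(F^{\ast}(\omega))$ for all $k$, hence $F(z_j) \to F^{\ast}(\omega)$ weakly; (5) upgrade weak convergence to norm convergence using boundedness of $F$ together with the existing norm radial limit — the cleanest implementation is the Lindel\"of principle for the $X$-valued bounded holomorphic map $F|_{S(\omega,\alpha)}$, whose existence of a radial limit plus single weak cluster value forces a full nontangential limit in norm. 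The main obstacle is precisely step (5): making the passage from weak nontangential convergence plus norm radial convergence to norm nontangential convergence, without assuming RNP on $X$; this is where one must be careful to use only the boundedness of $F$ and a genuinely vector-valued Lindel\"of-type lemma (bounded $X$-valued holomorphic functions on a disk that have a limit along one curve ending at a boundary point and are weakly convergent nontangentially are in fact norm convergent nontangentially), rather than any integral representation of $F$ by an $L_1(\mathbb{T},X)$-function, which need not exist.
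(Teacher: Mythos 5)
Your plan stalls exactly at the step you yourself flag as the crux. In step (4) you pass from convergence of $x_k^{\ast}(F(z_j))$ for a countable norming family $\{x_k^{\ast}\}$ (a legitimate reduction, since $F(\mathbb{D})$ is separable) plus boundedness to the assertion that $F(z_j)\to F^{\ast}(\omega)$ weakly; that implication is false in a general Banach space: in $\ell_1$ the unit vector basis converges to $0$ against the coordinate functionals, which are norming, but does not converge weakly. So the ``weak nontangential convergence'' that step (5) is supposed to consume has not been established -- you only have convergence in the $\sigma\bigl(X,\{x_k^{\ast}\}\bigr)$ topology. As for step (5) itself, the vector-valued Lindel\"of-type lemma you invoke is true, but you neither prove it nor reduce it to a citable scalar result, so the proposal is incomplete at its decisive point. (It is in fact true in a stronger form that makes steps (1)--(4) superfluous: if $F$ is bounded holomorphic and $F(r\omega_0)\to x_0$ in norm along the radius, then $z\mapsto\log\|F(z)-x_0\|_X$ is a bounded-above subharmonic function, being the supremum over $x^{\ast}\in B_{X^{\ast}}$ of the subharmonic functions $\log|x^{\ast}(F(z)-x_0)|$, and the classical two-constants/harmonic-measure proof of Lindel\"of's theorem uses nothing beyond this, giving the norm nontangential limit at that point directly.)

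The detour was forced by a misreading of where the Radon-Nikod\'ym-type obstruction lives. You discard the Poisson route because a bounded $F$ need not have an $L_1(\mathbb{T},X)$ boundary function without ARNP; but here the hypothesis \eqref{EQUA:radialLimit} already supplies it: $f(\omega):=\lim_{R\to1^-}F(R\omega)$ exists a.e., is strongly measurable and essentially bounded, and letting $R\to1^-$ in $F(Rz)=\int_{\mathbb{T}}F(R\omega)K(\omega,z)\,d\omega$ with dominated convergence shows that $F$ is the Poisson integral of $f$ as in \eqref{equa:PoissonRepresentation}. This is precisely the paper's argument: with that representation one runs the standard Fatou/maximal-function proof from Krantz, controlling the nontangential behaviour by the Hardy--Littlewood maximal function of the scalar function $\omega\mapsto\|f(\omega)-f(\omega_0)\|_X$ and using that almost every $\omega_0$ is a Lebesgue point of the Bochner-integrable $f$; no RNP-type hypothesis enters. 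Either repair step (5) by proving the subharmonic Lindel\"of lemma sketched above, or -- more simply and in line with the paper -- use the Poisson representation that the radial-limit hypothesis itself provides.
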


The proof of this fact in the scalar-case can be found in \cite{Krantz}, and the arguments can be extrapolated to the vector-valued case with no much difficulty: indeed, from \eqref{EQUA:radialLimit} we get $f \in H_{\infty}(\mathbb{T},X)$ defined as $f(\omega) = \lim_{R \rightarrow 1^{-}}{F(R \omega)}$ a.e. which represents $F$ as in \eqref{equa:PoissonRepresentation}. Following then \cite[section 3]{Krantz}, but working with the maximal function of the scalar-valued function $\omega \mapsto \| f(\omega)\|_{X}$ (which is measurable and essentially bounded), we can adapt the proof of \cite[Theorem 3.2]{Krantz} to get the conclusion for $F$.

\begin{Lemm}
\label{Lemm:horizontalConvergenceARNP}
Let $F: \mathbb{C}_{0} \rightarrow X$ be a bounded and holomorphic function. If $X$ has the ARNP, then $\lim_{\varepsilon \rightarrow 0^+}{F(\varepsilon + it)}$ exists for Lebesgue-almost all $t \in \R$.
\end{Lemm}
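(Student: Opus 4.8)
The plan is to transport the problem from the half-plane to the disk via the Cayley transformation $\varphi$, where the ARNP is phrased in its original form, and then deduce horizontal convergence from nontangential convergence. First I would set $G := F \circ \varphi : \mathbb{D} \to X$; since $\varphi$ maps $\mathbb{D}$ biholomorphically onto $\mathbb{C}_0$ and $F$ is bounded and holomorphic, $G$ is a bounded holomorphic function on $\mathbb{D}$, hence $G \in H_\infty(\mathbb{D},X)$. By the ARNP the radial limit $\lim_{R \to 1^-} G(R\omega)$ exists for almost every $\omega \in \mathbb{T}$, and then by Observation \ref{Obse:StoltzRegions} the nontangential limit $\lim_{z \to \omega,\, z \in S(\omega,\alpha)} G(z)$ exists for almost every $\omega \in \mathbb{T}$, for each fixed $\alpha > 1$.

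The second step is to translate this back. Recall $\varphi(e^{iu}) = i\cot(u/2)$, so the boundary point $\omega = e^{iu} \in \mathbb{T}\setminus\{1\}$ corresponds to the point $it \in i\mathbb{R}$ with $t = \cot(u/2)$, and the map $g(u) = \cot(u/2)$ is a diffeomorphism of $(0,2\pi)$ onto $\mathbb{R}$. Since $g$ is a diffeomorphism, it carries Lebesgue-null sets to Lebesgue-null sets and vice versa; hence the set of $u \in (0,2\pi)$ for which $G$ has a nontangential limit at $e^{iu}$ has full measure if and only if the corresponding set of $t = \cot(u/2) \in \mathbb{R}$ has full measure. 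So for almost every $t \in \mathbb{R}$, $G$ has a nontangential limit at $e^{iu}$ where $u = g^{-1}(t)$.

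The third step is the geometric comparison: I need to check that the horizontal segment $\{\varepsilon + it : \varepsilon > 0\}$ approaching $it$ in $\mathbb{C}_0$ pulls back under $\varphi^{-1}$ to a curve in $\mathbb{D}$ that eventually lies inside some Stolz angle $S(e^{iu}, \alpha)$ at the corresponding boundary point $e^{iu}$. This is a standard but essential conformal-geometry fact: $\varphi^{-1}$ is conformal at the boundary point and maps the boundary line $i\mathbb{R}$ to the circle $\mathbb{T}$, so a curve in $\mathbb{C}_0$ that approaches $it$ nontangentially (in particular transversally to $i\mathbb{R}$, as the horizontal ray does) is mapped to a curve approaching $e^{iu}$ nontangentially, i.e. staying in a Stolz region for a suitable $\alpha > 1$. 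One way to make this precise without invoking general theory is to compute: $\varphi^{-1}(w) = \frac{w-1}{w+1}$, so for $w = \varepsilon + it$ one estimates $|\varphi^{-1}(w) - e^{iu}|$ against $1 - |\varphi^{-1}(w)|$ directly and checks the ratio stays bounded as $\varepsilon \to 0^+$. Combining the three steps: for almost every $t$, $\lim_{\varepsilon \to 0^+} F(\varepsilon + it) = \lim_{\varepsilon \to 0^+} G(\varphi^{-1}(\varepsilon+it))$ exists because $\varphi^{-1}(\varepsilon + it) \to e^{iu}$ inside a Stolz angle and $G$ has a nontangential limit there.

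The main obstacle is the elementary-but-fiddly verification in the third step that the horizontal ray pulls back to a nontangentially convergent curve — i.e. producing an explicit $\alpha = \alpha(t) > 1$ (or even a uniform one after excluding a small set) such that $\varphi^{-1}(\varepsilon + it) \in S(e^{iu},\alpha)$ for all small $\varepsilon > 0$. This is where one must be careful that the estimate degrades as $|t| \to \infty$ (the boundary point $e^{iu}$ approaches $1$), but since for each fixed $t$ we only need \emph{some} finite $\alpha$, and we are asserting a conclusion for almost every $t$ one at a time, a pointwise-in-$t$ bound suffices and no uniformity is needed.
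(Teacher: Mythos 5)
Your proposal is correct and follows essentially the same route as the paper: compose with the Cayley transform, use Observation \ref{Obse:StoltzRegions} to upgrade radial to nontangential limits, check that the horizontal ray $\varepsilon+it$ pulls back into a Stolz region at $\varphi^{-1}(it)$, and transfer null sets via the diffeomorphism $u\mapsto\cot(u/2)$. The ``fiddly'' step you defer is carried out explicitly in the paper, where the ratio equals $\bigl(\sqrt{(1+\varepsilon)^2+t^2}+\sqrt{(1-\varepsilon)^2+t^2}\bigr)/\bigl(2\sqrt{1+t^2}\bigr)$, which tends to $1$ as $\varepsilon\to 0^{+}$ (indeed it is bounded by $1+\varepsilon$ uniformly in $t$), so no degeneration for large $|t|$ actually occurs.
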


\begin{proof}
Let us denote $f = F \circ \varphi$ which is a bounded and holomorphic function on $\mathbb{D}$. We claim that for any $\alpha > 1$, $\varphi^{-1}(\varepsilon + it)$ converges to $\varphi^{-1}(it)$ inside $S(\varphi^{-1}(it), \alpha)$ when $\varepsilon$ goes to zero. Indeed, one can easily check that
\[ \frac{|\varphi^{-1}(\varepsilon + it) - \varphi^{-1}(it)|}{1 - |\varphi^{-1}(\varepsilon + it)|} = \frac{\sqrt{(1 + \varepsilon)^2 + t^2} + \sqrt{(1 - \varepsilon)^2 + t^2}}{2 \sqrt{1 + t^2}}.  \]
Using Observation \ref{Obse:StoltzRegions}, we deduce that if $X$ has the ARNP, then
\[  \lim_{z \rightarrow \omega, z \in S(\omega, \alpha)}{f(z)} \]
exists for every $\omega \in A \subset \mathbb{T}$, being $A$ a set with Haar measure equal to one. Combining this fact with the claim above, we deduce that
\[ \lim_{\varepsilon \rightarrow 0^{+}}{f(\varphi^{-1}(\varepsilon + it))} = \lim_{\varepsilon \rightarrow 0^{+}}{F(\varepsilon + i t)} \]
exists for every $t \in g(A)$. Finally, a diffeomorphism maps measurable sets into measurable sets and null sets into null sets, so $\lambda(g(A)) = 1$.
\end{proof}

\begin{Theo}
\label{Theo:ARNPimpliesOnto}
Given a Banach space $X$, the following are equivalent:
\begin{enumerate}
\item[(i)] $X$ has the ARNP.\vspace{1mm}
\item[(ii)] $\mathcal{H}^{+}_{p}(X) = \mathcal{H}_{p}(X)$ for every $1 \leq p \leq \infty$.\vspace{1mm}
\item[(iii)] $\mathcal{H}^{+}_{p}(X) = \mathcal{H}_{p}(X)$ for some $1 \leq p \leq \infty$.
\end{enumerate}
\end{Theo}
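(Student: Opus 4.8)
The plan is to establish the implications (ii)$\Rightarrow$(iii)$\Rightarrow$(i)$\Rightarrow$(ii); the first is trivial, so the work lies in the other two.

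\emph{(iii)$\Rightarrow$(i).} Assume $\mathcal{H}^{+}_{p}(X) = \mathcal{H}_{p}(X)$ for some $1 \leq p \leq \infty$ and take an arbitrary $F \in H_{\infty}(\mathbb{D},X)$, say $F(z) = \sum_{n}{c_{n} z^{n}}$. As in the proof of Lemma \ref{Lemm:strongerThanARNP}, the Dirichlet series $D(s) := F(2^{-s}) = \sum_{n}{c_{n} 2^{-ns}}$ belongs to $\mathcal{H}^{+}_{\infty}(X)$, and since trivially $\mathcal{H}^{+}_{\infty}(X) \subseteq \mathcal{H}^{+}_{p}(X)$ (with $\| \cdot \|_{\mathcal{H}^{+}_{p}(X)} \leq \| \cdot \|_{\mathcal{H}^{+}_{\infty}(X)}$ on a probability space), it belongs to $\mathcal{H}^{+}_{p}(X)$, hence by hypothesis to $\mathcal{H}_{p}(X) = H_{p}(\IDT,X)$. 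The Fourier coefficients of its representing function are supported on the multi-indices $\alpha$ with $\alpha_{j} = 0$ for all $j \geq 2$; averaging over all variables but the first (cf.\ Lemma \ref{immer schon}) and comparing Fourier coefficients shows that this function depends only on $\omega_{1}$, so it has the form $\omega \mapsto h(\omega_{1})$ for some $h \in H_{p}(\mathbb{T},X) \subseteq L_{1}(\mathbb{T},X)$ with $\widehat{h}(n) = c_{n}$. Consequently $F$ is the (holomorphic) Poisson extension of $h$, and since the Poisson integral of a Bochner-integrable function converges radially almost everywhere to that function, $\lim_{R \to 1^{-}}{F(R\omega)}$ exists for almost every $\omega \in \mathbb{T}$. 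Therefore $X$ has the ARNP.

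\emph{(i)$\Rightarrow$(ii).} Consider first $1 \leq p < \infty$. By Dowling's theorem \cite{DowlingARNPLebesgue}, $L_{p}(\IDT,X)$ has the ARNP, and since the ARNP passes to closed subspaces, so does $\mathcal{H}_{p}(X) = H_{p}(\IDT,X)$. Let $D = \sum_{n}{a_{n} n^{-s}} \in \mathcal{H}^{+}_{p}(X)$. By Theorem \ref{Theo:representationHolomorphic2}, $z \mapsto D_{z}$ is a bounded holomorphic function from $\mathbb{C}_{0}$ into $\mathcal{H}_{p}(X)$, so Lemma \ref{Lemm:horizontalConvergenceARNP} yields some $t_{0} \in \mathbb{R}$ for which $G := \lim_{\varepsilon \to 0^{+}}{D_{\varepsilon + it_{0}}}$ exists in $\mathcal{H}_{p}(X)$. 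Comparing coefficients, the $n$-th coefficient of $G$ equals $a_{n} n^{-it_{0}}$; that is, $G = D^{\theta_{0}}$ in the notation of Lemma \ref{Lemm:RotationCoefficients}, where $\theta_{0} := (\primes_{j}^{-it_{0}})_{j} \in \IDT$. Applying Lemma \ref{Lemm:RotationCoefficients} once more, now to $G \in \mathcal{H}_{p}(X)$ and the rotation $\overline{\theta_{0}} = (\primes_{j}^{it_{0}})_{j} \in \IDT$, we obtain that $G^{\overline{\theta_{0}}} = D$ belongs to $\mathcal{H}_{p}(X)$; together with Theorem \ref{Rema1} this gives $\mathcal{H}_{p}(X) = \mathcal{H}^{+}_{p}(X)$ isometrically. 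For $p = \infty$ we use $\mathcal{H}^{+}_{\infty}(X) = \mathfrak{D}_{\infty}(X)$ (Corollary \ref{infty=infty}): given $D \in \mathcal{H}^{+}_{\infty}(X) \subseteq \mathcal{H}^{+}_{p}(X)$, the previous case gives $D \in \mathcal{H}_{p}(X)$ for every $p < \infty$ with $\| D\|_{\mathcal{H}_{p}(X)} \leq \| D\|_{\mathcal{H}^{+}_{\infty}(X)}$; the representing function of $D$ is the same for all these $p$, so it lies in $\bigcap_{1 \leq p < \infty}{L_{p}(\IDT,X)}$ with uniformly bounded norms, hence (by the fact recalled in the proof of Lemma \ref{Lemm:pnormsLimit}) in $L_{\infty}(\IDT,X)$, whence $D \in \mathcal{H}_{\infty}(X)$ with $\| D\|_{\mathcal{H}_{\infty}(X)} \leq \| D\|_{\mathcal{H}^{+}_{\infty}(X)}$; the reverse estimate is Theorem \ref{Rema1}.

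\emph{Main obstacle.} The genuinely delicate point is the passage from the merely almost-everywhere existence of the horizontal boundary limit of $z \mapsto D_{z}$ — all that Lemma \ref{Lemm:horizontalConvergenceARNP} provides, the imaginary axis possibly lying in the exceptional null set — to the membership $D \in \mathcal{H}_{p}(X)$ itself. This is resolved by invoking the rotation invariance of Lemma \ref{Lemm:RotationCoefficients} twice: first one moves to a good vertical line $\{\varepsilon + it_{0}\}$, and then one rotates the resulting limit $G$ back by $\overline{\theta_{0}}$ to recover $D$. Everything else is a matter of assembling the representation results of the previous sections with Dowling's theorem and the elementary $L_{p}$-bootstrap needed for $p = \infty$.
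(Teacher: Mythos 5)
Your proposal is correct, and for (i)$\Rightarrow$(ii) with $1 \leq p < \infty$ it is essentially the paper's argument: Theorem \ref{Theo:representationHolomorphic2} plus Dowling's theorem and Lemma \ref{Lemm:horizontalConvergenceARNP} produce a good vertical line $\varepsilon+it_0$, and your explicit double use of Lemma \ref{Lemm:RotationCoefficients} (rotate to $t_0$, then rotate the limit back by $\overline{\theta_0}$) is exactly what the paper compresses into the phrase ``rotation-invariance of the Lebesgue measure''. You diverge in the other two steps. For (i)$\Rightarrow$(ii) at $p=\infty$ the paper passes through Theorem \ref{Theo:DirichletOperators}: it represents $D$ by $T \in \Lambda^{+}(L_1(\IDT),X)$ with $\|T\|_\Lambda = \|D\|_{\mathcal{H}^+_\infty(X)}$, takes the $H_{p_0}$-representing function $f$ from the finite-$p$ case, and bounds $\big\|\int f Q\, d\omega\big\|_X \le \|T\|_\Lambda$ for $\|Q\|_1\le 1$ to force $f \in L_\infty$; you instead combine the isometry of Theorem \ref{Rema1} with the monotonicity of $L_p$-norms and the fact $\|f\|_\infty=\lim_p\|f\|_p$ already recorded before Lemma \ref{Lemm:pnormsLimit}, which is more elementary and avoids the operator machinery. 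For (iii)$\Rightarrow$(i) the paper reduces a finite exponent to $p=\infty$ (again via Theorem \ref{Theo:DirichletOperators}) and then invokes Lemma \ref{Lemm:strongerThanARNP} together with the externally quoted characterization of ARNP as surjectivity of $H_\infty(\mathbb{T},X)\hookrightarrow H_\infty(\mathbb{D},X)$; you argue directly for the given $p$: $F(2^{-s})$ lies in $\mathcal{H}^+_p(X)=\mathcal{H}_p(X)$, its representing function has Fourier support on multiples of $e_1$ and hence (uniqueness of Fourier coefficients plus the averaging of Lemma \ref{immer schon}) depends only on $\omega_1$, so $F$ is the Poisson integral of some $h\in H_p(\mathbb{T},X)\subset L_1(\mathbb{T},X)$ and has a.e.\ radial limits. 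This buys a shorter, more self-contained route that works for any $p$ at once and never needs the operator representation nor the equivalence ARNP $\Leftrightarrow H_\infty(\mathbb{T},X)=H_\infty(\mathbb{D},X)$; its only outside ingredient is the a.e.\ convergence of Poisson integrals of Bochner-integrable functions, a standard, geometry-free fact (proved via vector-valued Lebesgue points) that you should cite or sketch. What the paper's longer detour buys is the quantitative operator-theoretic description through $\Lambda^+(E_{p^\ast}(\IDT),X)$, which it reuses later (e.g.\ in Theorem \ref{Theo:analyticMeasures} and Theorem \ref{Coro:H1dual}).
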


\begin{proof}
We prove first that (i) implies (ii) for $1 \leq p < \infty$. Let $D = \sum_{n}{a_{n} n^{-s}}$ be a Dirichlet series in  $\mathcal{H}^{+}_{p}(X)$. By Theorem \ref{Theo:representationHolomorphic2}, the function
\begin{align}\label{Fatou}
F: \mathbb{C}_0 \rightarrow \mathcal{H}_p(X)\,,\, F(z) = \sum_n \frac{a_n}{n^z} n^{-s}
\end{align}
is  well-defined, holomorphic and bounded. If $X$ has the ARNP, since $1 \leq p < \infty$, we know that $L_{p}(\IDT,X)$ has the ARNP by a result of Dowling \cite{DowlingARNPLebesgue}, and so does its subspace $H_{p}(\IDT,X) = \mathcal{H}_{p}(X)$. We can therefore use \eqref{Fatou} and combine it with Lemma \ref{Lemm:horizontalConvergenceARNP} in order to  conclude that for Lebesgue-almost all $t \in \mathbb{R}$
\[
 \text{$\mathcal{H}_{p}(X)$-$\lim_{\varepsilon \rightarrow 0^+}$} \,\,
 \sum_n \frac{a_n}{n^{ \varepsilon + it}}n^{-s}
 =\sum_n \frac{a_n}{n^{ it}}n^{-s}\,.
 \]
 But then the rotation-invariance of the Lebesgue measure on $\mathbb{T}^\mathbb{N}$ (see again Lemma \ref{Lemm:RotationCoefficients}) yields that as desired
 $D =  \sum_n a_n n^{-s} \in \mathcal{H}_p(X)$.

 This handles the case $1 \leq p < \infty$; let us add the proof of (i) $\Rightarrow$  (ii) for $p= \infty$.
We show that if $\mathcal{H}^{+}_{p_0}(X) = \mathcal{H}_{p_0}(X)$ for some $1 \leq p_0 < \infty$, then $\mathcal{H}^{+}_{\infty}(X) = \mathcal{H}_{\infty}(X)$: Let $D = \sum_n a_n n^{-s} \in \mathcal{H}^{+}_{\infty}(X)$.
We conclude from
Theorem \ref{Theo:DirichletOperators} that there is some  $T \in \Lambda^{+}(L_{1}(\IDT), X)$ such that
$\|D\|_{\mathcal{H}^{+}_\infty(X)} = \|T\|_\Lambda$ and
\[
T(\omega^{-\alpha}) = a_n
\,\,\,\text{ for  each $\alpha \in \mathbb{N}_0^{(\mathbb{N})}$ with  $n = \primes^\alpha$}\,.
\]
On the other hand, since $D \in \mathcal{H}^{+}_{\infty}(X)$ we  have that $D \in \mathcal{H}^{+}_{p_{0}}(X)$, and hence by assumption
$D \in  \mathcal{H}_{p_0}(X)$. In view of the definition of
$\mathcal{H}_{p_0}(X)$ there is  $f \in H_{p_0}(\mathbb{T}^{\mathbb{N}},X)$ such that
$\hat{f}(\alpha) = a_n$ for  each $\alpha \in \mathbb{N}_0^{(\mathbb{N})}$ with  $n = \primes^\alpha$.
The aim is to prove  that $f \in H_{\infty}(\mathbb{T}^{\mathbb{N}},X)$.
We have
\[
T(\omega^{-\alpha}) = \int_{\mathbb{T}^{\mathbb{N}}} f(\omega) \omega^{-\alpha} d \omega
\,\,\,\text{ for  each $\alpha \in \mathbb{N}_0^{(\mathbb{N})}$ },
\]
and hence  for each trigonometric polynomial  $Q \in L_1(\mathbb{T}^{\mathbb{N}})$ with $\|Q\|_1 \leq 1$
\[
 \Big\|\int_{\mathbb{T}^{\mathbb{N}}} f(\omega) Q(\omega) d \omega \Big\|_{X}
 = \big\| T(Q) \big\|_X \leq \|T\| \leq \|T\|_\Lambda = \|D\|_{\mathcal{H}^{+}_\infty(X)}.
\]
Density of the trigonometric polynomials in $L_1(\mathbb{T}^{\mathbb{N}})$ shows that as desired
$D=f \in H_{\infty}(\mathbb{T}^{\mathbb{N}},X) \equiv\mathcal{H}_\infty(X)$.

Finally, it remains to prove that (iii) implies (i). But this we already did in Lemma \ref{Lemm:strongerThanARNP}:
If $\mathcal{H}^{+}_{\infty}(X) = \mathcal{H}_{\infty}(X)$, then $X$ has the ARNP.
\end{proof}

\section{Holomorphy}
\label{sec:holomorphic}
In the scalar case Hardy spaces on the infinite dimensional torus and Hardy
   spaces of Dirichlet series can be identified with  certain spaces of holomorphic functions in infinitely many variables (see e.g. \cite{MultipliersMonomialSets},\cite{ColeGamelin}, and \cite{HardySpaceDirich}). A combination of these ideas with those of the previous sections leads to interesting descriptions
   of $\mathcal{H}_p(X)$ and $\mathcal{H}^+_p(X)$ in terms of infinite dimensional holomorphy.

Let us fix $\SSS \subset c_0$ a \emph{Banach space sequence}, i.e., a Banach space of sequences in $c_{0}$ such that the canonical vectors $e_{n}$, $ n \in \N$ form a $1$-unconditional basis. We consider the open unit ball $B_{c_{0}} \cap \SSS$ as an open subset of $\SSS$, and define
$$H_{\infty}(B_{c_{0}} \cap \SSS,X)$$
to be the Banach space of all holomorphic and bounded functions $F:B_{c_{0}} \cap \SSS \rightarrow X$ endowed with the supremum norm. For $1 \leq p < \infty$,
$$H_{p}(B_{c_0} \cap \SSS, X)$$
stands for the space of all holomorphic functions $F: B_{c_0} \cap \SSS \rightarrow X$ satisfying
\[ \|F \|_{H_{p}(B_{c_0} \cap \SSS, X)}:=\sup_{m \in \N}{\sup_{0 < r < 1}{ \Big( \int_{\mathbb{T}^{m}}{\| F(r z_1, \ldots, r z_m, 0 )\|_{X}^{p} \: dz} \Big)^{1/p} } } < \infty. \]
This is a normed space which in general is not complete (see Remark \ref{complete}). The classical Banach space
$H_{p}(\mathbb{D}^{m},X)$ can be seen as an isometric subspace of $H_{p}(B_{c_0} \cap \SSS, X)$:
\[
H_{p}(\mathbb{D}^{m},X) \hookrightarrow H_{p}(B_{c_{0}} \cap \SSS,X)\,,\,\, f \mapsto f \circ \pi_m \,,
\]
where  $\pi_m : B_{c_0} \cap \SSS \rightarrow \mathbb{D}^{m}$ denotes the projection onto the first $m$ variables.
$H_{p}(\mathbb{D}^{m},X)$ is even $1$-complemented since   for each $F \in H_{p}(B_{c_0} \cap \SSS, X)$ its restriction $F_{m}$  to the first $m$ variables belongs to $H_{p}(\mathbb{D}^{m},X)$ with $\| F_{m}\|_{p} \leq \| F\|_{p}$. This way each such $F$ has an associated monomial series expansion
\[ F \equiv \sum_{\alpha \in \N_{0}^{(\N)}}{c_{\alpha}(F) z^{\alpha}}
 \]
 such that $F_{m}(z) = \sum_{\alpha \in \N_{0}^{m}}{c_{\alpha}(F) z^{\alpha}}$ for each $z \in \mathbb{D}^{m}$ and $m \in \N$.

\begin{Lemm}
\label{Lemm:holomorphicIsideDirichlet}
Let $1 \leq p \leq \infty$ and $\SSS$ as above. The Bohr transform establishes a contraction $\BB:H_{p}(B_{c_0} \cap \SSS,X) \rightarrow \mathcal{H}^{+}_{p}(X)$.
\end{Lemm}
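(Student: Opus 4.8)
The plan is to show that $D:=\BB(F)=\sum_n a_n n^{-s}$, where $a_{\primes^\alpha}=c_\alpha(F)$, belongs to $\mathcal{H}^{+}_{p}(X)$ with $\|D\|_{\mathcal{H}^{+}_{p}(X)}\le\|F\|_{H_p(B_{c_0}\cap\SSS,X)}$; I treat first the range $1\le p<\infty$. Fix $\varepsilon>0$ and $m\in\N$. For $z\in\overline{\mathbb{D}}^m$ the point $(\primes_1^{-\varepsilon}z_1,\dots,\primes_m^{-\varepsilon}z_m,0,0,\dots)$ has finite support, hence lies in $\SSS$, and its $c_0$-norm is at most $\primes_1^{-\varepsilon}<1$, so
\[
g_{\varepsilon,m}(z):=F(\primes_1^{-\varepsilon}z_1,\dots,\primes_m^{-\varepsilon}z_m,0,0,\dots)=F_m(\primes_1^{-\varepsilon}z_1,\dots,\primes_m^{-\varepsilon}z_m)
\]
defines a function holomorphic on a neighbourhood of $\overline{\mathbb{D}}^m$. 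Substituting into $F_m(w)=\sum_{\alpha\in\N_{0}^{m}}c_\alpha(F)w^\alpha$ gives the Taylor expansion $g_{\varepsilon,m}(z)=\sum_{\alpha\in\N_{0}^{m}}c_\alpha(F)(\primes^\alpha)^{-\varepsilon}z^\alpha$, so $g_{\varepsilon,m}$, viewed (by dependence on finitely many variables) as an element of $H_p(\IDT,X)=\mathcal{H}_p(X)$, corresponds under $\BB$ to the Dirichlet series $D_{\varepsilon,m}:=\sum_{n=\primes^\alpha,\,\alpha\in\N_{0}^{m}}\frac{a_n}{n^\varepsilon}\,n^{-s}\in\mathcal{H}_p(X)$, with $\|D_{\varepsilon,m}\|_{\mathcal{H}_p(X)}=\|g_{\varepsilon,m}\|_{H_p(\mathbb{T}^m,X)}$. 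Since $z\mapsto\|F_m(z)\|_X$ is plurisubharmonic on $\mathbb{D}^m$ and $t\mapsto t^p$ is convex and increasing, $\|F_m(\cdot)\|_X^p$ is plurisubharmonic, hence its polyradial integral means over $\mathbb{T}^m$ are non-decreasing in each radius; applying this with the radii $\primes_j^{-\varepsilon}<1$ and then letting the common radius increase to $1$ gives
\[
\|D_{\varepsilon,m}\|_{\mathcal{H}_p(X)}^p=\int_{\mathbb{T}^m}\|F_m(\primes_1^{-\varepsilon}z_1,\dots,\primes_m^{-\varepsilon}z_m)\|_X^p\,dz\le\|F_m\|_{H_p(\mathbb{D}^m,X)}^p\le\|F\|_{H_p(B_{c_0}\cap\SSS,X)}^p.
\]

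One would now like to let $m\to\infty$ for fixed $\varepsilon$, but $(D_{\varepsilon,m})_m$ need not converge in $\mathcal{H}_p(X)$ a priori: $F$ need not extend to the point $(\primes_n^{-\varepsilon}\omega_n)_n$, and $X$ need not be reflexive. Instead I would proceed as follows. Since $D_{\varepsilon,m}\in\mathcal{H}_p(X)\subset\mathcal{H}^{+}_{p}(X)$ isometrically (Theorem \ref{Rema1}), an application of Theorem \ref{Theo:PerronPartialSum} to $D_{\varepsilon,m}$ — choosing $m$ so large that every $n\le N$ is of the form $\primes^\alpha$ with $\alpha\in\N_{0}^{m}$, and using that a Dirichlet polynomial has equal $\mathcal{H}^{+}_{p}$- and $\mathcal{H}_p$-norms — yields $\big\|\sum_{n\le N}\frac{a_n}{n^\varepsilon}n^{-s}\big\|_{\mathcal{H}_p(X)}\le C\,\log N\,\|F\|_{H_p(B_{c_0}\cap\SSS,X)}$ for all $\varepsilon>0$ and $N>1$. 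Feeding this bound into the Abel summation estimate from the proof of Corollary \ref{Coro:HpUniformAbcissa}, with $D_\varepsilon$ in the role of $D$, shows that for every $\eta>0$ the partial sums of $D_{\varepsilon+\eta}$ form a Cauchy sequence in $\mathcal{H}_p(X)$; by uniqueness of the coefficients, $D_\delta\in\mathcal{H}_p(X)$ for every $\delta>0$. For such $\delta$ the function on $\IDT$ representing $D_\delta$ has conditional expectation onto the first $m$ coordinates equal to the one representing $D_{\delta,m}$, so Lemma \ref{immer schon} gives $D_{\delta,m}\to D_\delta$ in $\mathcal{H}_p(X)$, whence $\|D_\delta\|_{\mathcal{H}_p(X)}=\lim_m\|D_{\delta,m}\|_{\mathcal{H}_p(X)}\le\|F\|_{H_p(B_{c_0}\cap\SSS,X)}$ by the estimate of the previous paragraph. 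Taking the supremum over $\delta>0$ yields $\|D\|_{\mathcal{H}^{+}_{p}(X)}\le\|F\|_{H_p(B_{c_0}\cap\SSS,X)}$, which settles $1\le p<\infty$.

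For $p=\infty$, since for $0<r<1$ and $z\in\mathbb{T}^m$ the point $(rz_1,\dots,rz_m,0,0,\dots)$ lies in $B_{c_0}\cap\SSS$ and $\mathbb{T}^m$ carries a probability measure, one has $H_\infty(B_{c_0}\cap\SSS,X)\subset H_q(B_{c_0}\cap\SSS,X)$ contractively for every $1\le q<\infty$. Hence, by the case already proved, $D=\BB(F)\in\mathcal{H}^{+}_{q}(X)$ with $\|D\|_{\mathcal{H}^{+}_{q}(X)}\le\|F\|_{H_\infty(B_{c_0}\cap\SSS,X)}$ for every $q<\infty$, and Lemma \ref{Lemm:pnormsLimit} then gives $\|D\|_{\mathcal{H}^{+}_{\infty}(X)}=\lim_{q\to\infty}\|D\|_{\mathcal{H}^{+}_{q}(X)}\le\|F\|_{H_\infty(B_{c_0}\cap\SSS,X)}$, which completes the proof.

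The main difficulty is precisely the passage $m\to\infty$ for fixed $\varepsilon$: there is no direct compactness available (neither pointwise convergence of the representing functions nor reflexivity of $\mathcal{H}_p(X)$), so the logarithmic loss in Theorem \ref{Theo:PerronPartialSum} has to be introduced and then disposed of — it is absorbed by the Abel summation trick, which delivers only the membership $D_\delta\in\mathcal{H}_p(X)$, after which the sharp constant is recovered via the norm-one coordinate projections of Lemma \ref{immer schon}. The plurisubharmonicity estimate and the $p=\infty$ reduction are routine in comparison.
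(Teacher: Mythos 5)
Your argument is correct, and it takes a genuinely different route from the paper. The paper deduces the lemma from the already established isometry $\Lambda^{+}(E_{p^\ast}(\IDT),X)\equiv\mathcal{H}^{+}_{p}(X)$ of Theorem \ref{Theo:DirichletOperators}: given $F$, it builds a cone absolutely summing operator $T$ with $T(\omega^{-\alpha})=c_\alpha(F)$ by estimating $\|T(P)\|_X\le\int |P(\omega r^{-1})|\,\|F_m(r\omega)\|_X\,d\omega$ for trigonometric polynomials, taking a weak$^\ast$-cluster point $\xi^\ast$ of the bounded family $\{\|F_m(r\cdot)\|_X\}\subset E_{p^\ast}(\IDT)^\ast$, and invoking Lemma \ref{Lemm:equivPositAbsSum} to get $\|T\|_\Lambda\le\|F\|_{H_p(B_{c_0}\cap\SSS,X)}$; the transfer to $\mathcal{H}^{+}_p(X)$ is then automatic. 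You bypass the operator framework entirely and argue directly with the dilated restrictions $g_{\varepsilon,m}(z)=F_m(\primes_1^{-\varepsilon}z_1,\dots,\primes_m^{-\varepsilon}z_m)$, whose $\mathcal{H}_p$-norms you control by monotonicity of the integral means (where the paper's corresponding step in Theorem \ref{Theo:DirichletOperators} uses the Poisson kernel and the cone-summing majorant); you then recover membership $D_\delta\in\mathcal{H}_p(X)$ via Theorem \ref{Theo:PerronPartialSum} plus the Abel-summation trick of Corollary \ref{Coro:HpUniformAbcissa}, recover the sharp constant through the norm-one projections of Lemma \ref{immer schon}, and settle $p=\infty$ by the contractive inclusion into the finite-$q$ spaces together with Lemma \ref{Lemm:pnormsLimit}. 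In effect you re-run, at the level of $F$ itself, the machinery of the first half of the proof of Theorem \ref{Theo:DirichletOperators}, with the input bound $\|D_{\varepsilon,m}\|_{\mathcal{H}_p(X)}\le\|F\|$ coming from the definition of the $H_p(B_{c_0}\cap\SSS,X)$-norm rather than from $\|T\|_\Lambda$. What the paper's route buys is brevity and uniformity (one weak$^\ast$-compactness argument covers all $p$, including $p=\infty$, and the operator picture is reused elsewhere, e.g.\ in Sections \ref{sect:ARNP} and \ref{sec:equivalencesARNP}); what your route buys is independence from the cone-summing/duality machinery of Section \ref{sect:operatorSpaces}, at the cost of duplicating the partial-sum and projection arguments already carried out there. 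All the results you quote (Theorems \ref{Rema1} and \ref{Theo:PerronPartialSum}, Corollary \ref{Coro:HpUniformAbcissa}, Lemmas \ref{immer schon} and \ref{Lemm:pnormsLimit}) precede the lemma and do not depend on it, so there is no circularity.
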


\begin{proof}
Using Theorem \ref{Theo:DirichletOperators}, it is enough to show that for each $F \in H_{p}(B_{c_0} \cap \SSS,X)$ there is an operator $T \in \Lambda^{+}{(E_{p^\ast}, X)}$ with equal  norm and  such that  the Fourier coefficients $T(\omega^{-\alpha})$ equal the monomial coefficients $c_\alpha(F)$. Since we will reason as in the proof of the surjectivity in Theorem \ref{Theo:DirichletOperators}, we omit some details. Given $F \in H_{p}(B_{c_0} \cap \SSS,X)$, we define a linear map
\[ T:\spn{\{ \omega^{\alpha}: \alpha \in \mathbb{Z}^{(\N)} \}} \rightarrow X \mbox{ \: by \: } T(\omega^{-\alpha}) = c_{\alpha}(F), \: \alpha \in \N_{0}^{(\N)}.\]
If $P$ is a trigonometric polynomial in $m$ variables, then by orthogonality
\begin{align*} \label{saubieber}
\|T(P(\omega))\|_X
&
= \Big\| \int_{\mathbb{T}^m}{ P(\omega r^{-1}) F_{m}(r \omega)  \: d \omega} \Big\|_X
\\
&
\leq \int_{\mathbb{T}^m}{\left| P(\omega r^{-1})\right|  \| F_{m}(r \omega)\|_X \: d\omega}.
\end{align*}
Since $\{ \|F_{m}(r \cdot)\|_X \colon 0 < r <1, m \in \N \}$ is bounded in $L_{p}(\IDT) \subset (E_{p^\ast})^\ast$, more precisely contained in $\|F \|_{H_{p}(B_{c_0} \cap \SSS, X)}  B_{L_{p}(\IDT)}$, we can take
\[ \xi^{\ast} \in \bigcap_{0<R<1, M \in \N}{\overline{\{ \| F_{m}(r \cdot) \|_X \colon R < r < 1, M < m \}}^{\omega^\ast}}, \]
which satisfies $\| \xi^\ast\|_{(E_{p^\ast})^\ast} \leq \| F\|_{H_{p}(B_{c_0} \cap \SSS, X)}$ and $\| T(\psi)\|_X \leq \xi^{\ast}(|\psi|)$ for every trigonometric polynomial $\psi$ in infinitely many variables. Finally, the  operator $T$ extends to an element in  $\Lambda^{+}{(E_{p^\ast}, X)}$ with the desired properties
(see the proof of Theorem \ref{Theo:DirichletOperators} for details).
\end{proof}

We finally prove that for $p=\infty$ and $\SSS = B_{c_0}$ as well as $1 \leq p < \infty$ and $\SSS = \ell_2$
the contraction from the preceding lemma is even an onto isometry.
To start let $1 \leq p \leq \infty$ and  $T \in \Lambda^{+}(E_{p^\ast},X)$. Then for every $m \in \N$ we have a well-defined holomorphic function
\begin{equation}
\label{equa:definePartialVariables}
F_{m}(z) =   T\Big( \prod_{n=1}^{m}{K(\omega_{n}, z_{n})} \Big) =\sum_{\alpha \in \N_{0}^{m}}{a_{p^\alpha} z^{\alpha}}\,\,, \,\,\, z \in B_{c_0}.
\end{equation}
Hence we prove  as in the first part of the proof of Theorem \ref{Theo:DirichletOperators} (more precisely see the proof of the equations \eqref{equa:boundFunctionHp1} and \eqref{equa:boundFunctionHp2}) that for every $\SSS$  and every $1 \leq p \leq \infty$
\begin{align}
\label{equa:definePartialVariablesII}
\begin{split}
&
F_{m} \in H_{p}(\mathbb{D}^{m},X)  \hookrightarrow H_{p}(B_{c_0} \cap \SSS,X), X)
\\
&
\|F_{m}\|_{H_{p}(\mathbb{D}^{m},X)}=
\|F_{m}\|_{H_{p}(B_{c_0} \cap \SSS,X)} \leq \| T\|_{\Lambda^+}\,.
\end{split}
\end{align}
\noindent The following theorem exploits this fact in the case  $\SSS = c_0$ and $p =\infty$.

\begin{Theo}
\label{Theo:Holomorphicc0}
The Bohr transform $\BB$ defines an onto isometry from $H_{\infty}(B_{c_0}, X)$ into $\mathcal{H}^{+}_{\infty}(X)$, i.e.,
 $$H_{\infty}(B_{c_0}, X)  \equiv \mathcal{H}^{+}_{\infty}(X)\,. $$
 Moreover, if $D = \BB(F)$, then $F(\primes^{-s}) = \sum_{n}{a_{n} n^{-s}}$ whenever $\Real{(s)} > 0$.
\end{Theo}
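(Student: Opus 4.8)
The plan is to establish the isometry in two directions, using Lemma \ref{Lemm:holomorphicIsideDirichlet} for the contraction $\BB : H_{\infty}(B_{c_0},X) \to \mathcal{H}^{+}_{\infty}(X)$ and then producing an inverse contraction via Corollary \ref{infty=infty}, which identifies $\mathcal{H}^{+}_{\infty}(X)$ isometrically with $\mathfrak{D}_{\infty}(X)$. So let $D = \sum_n a_n n^{-s} \in \mathfrak{D}_{\infty}(X)$, a bounded holomorphic function on $\mathbb{C}_0$ with $\|D\|_{\mathfrak{D}_{\infty}(X)} = \sup_{\Real(s)>0}\|D(s)\|_X$. The natural candidate is the function $F$ on $B_{c_0}$ whose monomial coefficients are $c_\alpha(F) = a_{\primes^\alpha}$; one must show $F$ is well-defined, holomorphic, bounded on $B_{c_0}$, and that $\|F\|_{H_{\infty}(B_{c_0},X)} \le \|D\|_{\mathfrak{D}_{\infty}(X)}$.

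To build $F$, first invoke Theorem \ref{Theo:DirichletOperators}: from $D \in \mathcal{H}^{+}_{\infty}(X)$ we obtain $T \in \Lambda^{+}(E_{1}(\IDT),X) = \mathcal{L}^{+}(L_1(\IDT),X)$ with $T(\omega^{-\alpha}) = a_{\primes^\alpha}$ and $\|T\|_{\Lambda} = \|D\|_{\mathcal{H}^{+}_{\infty}(X)}$. Then apply \eqref{equa:definePartialVariables}–\eqref{equa:definePartialVariablesII} with $\SSS = c_0$, $p=\infty$: for each $m$ the function $F_m(z) = T\big(\prod_{n=1}^m K(\omega_n, z_n)\big)$ lies in $H_\infty(\mathbb{D}^m,X)$ with $\|F_m\|_{H_\infty(\mathbb{D}^m,X)} \le \|T\|_{\Lambda}$ and monomial coefficients $a_{\primes^\alpha}$ for $\alpha \in \N_0^m$. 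These $F_m$ are coherent (restricting $F_{m+1}$ to the first $m$ variables gives $F_m$), so by the standard theory of monomial series on $B_{c_0}$ (as developed in \cite{MultipliersMonomialSets}) they assemble into a single $F \in H_\infty(B_{c_0},X)$ with $c_\alpha(F) = a_{\primes^\alpha}$ and $\|F\|_{H_\infty(B_{c_0},X)} = \sup_m \|F_m\|_{H_\infty(\mathbb{D}^m,X)} \le \|T\|_{\Lambda} = \|D\|_{\mathcal{H}^{+}_{\infty}(X)}$. Composing with Lemma \ref{Lemm:holomorphicIsideDirichlet}, which gives $\|\BB(F)\|_{\mathcal{H}^{+}_{\infty}(X)} \le \|F\|_{H_\infty(B_{c_0},X)}$, and noting $\BB(F) = D$ by matching coefficients, we get equality of norms throughout; hence $\BB$ is an onto isometry.

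For the "moreover" part, fix $s$ with $\Real(s) > 0$. The point $\primes^{-s} = (\primes_n^{-s})_n$ indeed lies in $B_{c_0}$ since $|\primes_n^{-s}| = \primes_n^{-\Real(s)} \to 0$. By Theorem \ref{Biebersau}, the partial sums $\sum_{n\le N} a_n n^{-s}$ converge to $D(s)$; on the other hand, evaluating the monomial series of $F$ at $\primes^{-s}$ and grouping terms according to $n = \primes^\alpha$ gives exactly $\sum_n a_n n^{-s}$, and absolute/uniform convergence on $\mathbb{C}_\varepsilon$ (again Theorem \ref{Biebersau}, or the dominated convergence of the monomial expansion on the compact subsets of $B_{c_0}$ hit by $\primes^{-s}$ for $\Real(s) \ge \varepsilon$) justifies the rearrangement, so $F(\primes^{-s}) = \sum_n a_n n^{-s} = D(s)$.

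I expect the main obstacle to be the bookkeeping in assembling the coherent family $(F_m)_m$ into a genuine holomorphic function $F$ on the infinite-dimensional ball $B_{c_0}$ with control on $\|F\|_\infty$ — i.e. verifying that the monomial series $\sum_\alpha a_{\primes^\alpha} z^\alpha$ converges and represents a bounded holomorphic function, rather than just a formal object. This is precisely the content one imports from \cite{MultipliersMonomialSets} and \cite{HardySpaceDirich}; the key input is that for $H_\infty$ on $B_{c_0}$ the sup-norm equals $\sup_m$ of the polydisc sup-norms of the restrictions, which makes the passage from the finite-dimensional estimates \eqref{equa:definePartialVariablesII} to the infinite-dimensional statement essentially automatic. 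Everything else is a matter of chasing coefficients through the already-established isometries of Theorem \ref{Theo:DirichletOperators} and Corollary \ref{infty=infty}.
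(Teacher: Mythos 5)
Your overall route is the same as the paper's: the contraction via Lemma \ref{Lemm:holomorphicIsideDirichlet}, then the inverse built from $T=\BB^{-1}D \in \Lambda^{+}(L_1(\IDT),X)$ through the Poisson--kernel functions $F_m$ of \eqref{equa:definePartialVariables}--\eqref{equa:definePartialVariablesII}. The genuine gap is precisely the step you declare ``essentially automatic'': assembling the coherent, uniformly bounded family $(F_m)_m$ into an element of $H_{\infty}(B_{c_0},X)$. The scalar results you cite from \cite{MultipliersMonomialSets} and \cite{HardySpaceDirich} do not give this in the vector-valued setting: composing with functionals only produces, for each $z\in B_{c_0}$, a candidate value in $X^{\ast\ast}$, and the identity $\|F\|_{H_\infty(B_{c_0},X)}=\sup_m\|F_m\|_{H_\infty(\mathbb{D}^m,X)}$ is a property of a function already known to exist, not an existence criterion. (Within the paper's own logic, the relevant ``Hilbert criterion'', Corollary \ref{Coro:HilbertCriterion}, is \emph{deduced} from this very proof, so it cannot be invoked here.) The paper fills exactly this hole with a vector-valued Schwarz lemma: for $m<M$, applying \eqref{equa:SwartzLemma} to $z\mapsto F_m(u,z,v)-F_M(u,z,v)$ gives $\|F_m(z)-F_M(z)\|_X\le 2\|T\|_\Lambda \max_{m\le n\le M}|z_n|$, so $(F_m)_m$ is Cauchy uniformly on the compact sets $\{|z_n|\le r_n\}$ with $(r_n)\in B_{c_0}$, and Montel's theorem then produces $F$ with $\|F\|_\infty\le\|T\|_\Lambda$ and norm convergence of the values in $X$. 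Some such argument is indispensable; without it your proof is incomplete.

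The ``moreover'' part also needs repair. Theorem \ref{Biebersau} gives \emph{uniform}, not absolute, convergence on $\mathbb{C}_\varepsilon$, and the monomial series of an $H_\infty(B_{c_0},X)$ function need not converge at $\primes^{-s}$ when $0<\Real(s)\le 1/2$ (monomial convergence at that point essentially requires $\primes^{-s}\in\ell_2$), so the ``grouping/rearrangement'' you invoke is not justified on all of $\mathbb{C}_0$. The correct argument, as in the paper, is to verify $F(\primes^{-s})=\sum_n a_n n^{-s}$ only for $\Real(s)>1$, where $\|a_n\|\le\|T\|_\Lambda$ makes both the Dirichlet series and the monomial series absolutely convergent, and then to note that $s\mapsto F(\primes^{-s})$ and $D(s)$ are both holomorphic on $\mathbb{C}_0$, so the identity theorem extends the equality to $\Real(s)>0$.
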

\begin{proof}
Lemma \ref{Lemm:holomorphicIsideDirichlet} states that $\BB$  is a well-defined contraction. The argument  that it is an onto  isometry will make use of the following version of Schwartz' lemma which can be easily extended to the multi-dimensional case, and by composing with functionals in a second step to the vector-valued setting: If $f:\mathbb{D}^{m} \rightarrow X$ is a holomorphic function with $f(0) = 0$ and $\| f(z)\|_X < 1$ for each $z\in \mathbb{D}^{m}$, then
for each $z=(z_1, \ldots, z_m)\in \mathbb{D}^{m}$
\begin{equation}
\label{equa:SwartzLemma}
\| f(z)\|_X \leq \max_{1 \leq n \leq m} |z_{n}|;
\end{equation}
indeed, for $X= \mathbb{C}$ and $z_1, \ldots, z_m \in \mathbb{D}$ (not all null) consider
$$g: \mathbb{D} \rightarrow \mathbb{D}\,,\,\, g(\zeta)=f\big(\zeta \cdot \big( (\max_n |z_n|)^{-1}z_1, \ldots, (\max_n |z_n|)^{-1}z_m\big)\big)$$
and apply Schwartz' Lemma to get  $|g(\zeta)| \leq |\zeta|$ for all $\zeta \in \mathbb{D}$. Hence as desired
$|g(\max_n |z_n|)| \leq \max_n |z_n|$.\\

\noindent
Let $D \in \mathcal{H}^{+}_{\infty}(X)$ and $\BB^{-1}D = T \in \Lambda^{+}(L^{1}(\IDT),X)$ given by Theorem \ref{Theo:DirichletOperators}. Define the sequence $(F_{m})_m$ in $H_{\infty}(B_{c_0},X)$  from \eqref{equa:definePartialVariables} which by \eqref{equa:definePartialVariablesII} satisfies
\begin{align} \label{2016}
\| F_{m}\|_{H_{\infty}(B_{c_0},X)} \leq \|T\|_\Lambda \,\,, \,\, m\in \mathbb{N}\,.
\end{align}
We show that $(F_{m})_m$ forms a Cauchy sequence in $H_{\infty}(B_{c_0},X)$ endowed with the compact-open topology:
For  $1 \leq m < M$ and fixed $u \in \mathbb{D}^m,  v\in \mathbb{D}^\mathbb{N}$ we apply  \eqref{equa:SwartzLemma} to the function
$$f:\mathbb{D}^m \rightarrow X\,, \,\,\,z=(z_{m+1}, \ldots, z_{M}) \mapsto F_{m}(u,z,v) - F_{M}(u,z,v)\,,$$
and obtain that for all $z \in B_{c_0}$
\[ \| F_{m}(z) - F_{M}(z)\|_X \leq 2 \|T\|_{\Lambda} \max{\{ |z_{n}| \colon m \leq n \leq M \}}. \]
This gives that $(F_{m})_m$ converges uniformly on each compact set
$$K = \big\{ z \colon |z_n| \leq r_{n}\big\} \subset B_{c_0} \,\,\, \text{ with }
\,\,\, (r_{n})_{n} \in B_{c_{0}}\,.$$
 Since every compact subset of $B_{c_{0}}$ is contained in such a set $K$, we apply
  Montel's theorem and get some  $F \in H_{\infty}(B_{c_0}, X)$ such that $F_m \rightarrow  F$ uniformly on all compact subsets of $B_{c_o}$. Moreover, $F$ is bounded by \eqref{2016} with  $\|F\|_{\infty} \leq \| T\|_{\Lambda}$. To finish the proof of the surjectivity it remains to show that $D = \BB(F)$, i.e., \ $c_{\alpha_0}(F) = a_{n_0}(D)$ whenever $\alpha_0 \in \mathbb{N}_0^{m_0}$
  and $n_0 = \primes^{\alpha_0}$. Indeed, $F_m \rightarrow F$ uniformly on $\mathbb{D}^{m_0}$,
  hence $\lim_m c_{\alpha_0}(F_m) = c_{\alpha_0}(F)$. But by definition $c_{\alpha_0}(F_m) = T(\omega^{-\alpha_0}) = a_{n_0}(D)$  for $m \ge m_0$.

  Finally, we explain why  $F(\primes^{-s}) = \sum_{n=1}^\infty a_{n}\frac{1}{n^s}$ for $\Real{(s)}>0$: Note first that
  both functions $D(s)$ and $F(\primes^{-s})$ are holomorphic on $\mathbb{C}_0$. On the other hand, $D$ converges absolutely for $\Real{(s)}>1$ (we have that $|a_n(D)| \leq \|T\|_\Lambda$ for all $n$), and since
  $D = \BB(F)$ this implies that
  $\sum_{\alpha \in \mathbb{N}_0^{(\mathbb{N})}} |c_\alpha(F) \primes^{-s}| <\infty$.
  Since  $F$ is continuous in $\primes^{-s} \in B_{c_0}$, we obtain
  that $F( \primes^{-s}) = \sum_{\alpha \in \mathbb{N}_0^{(\mathbb{N})}} c_\alpha(F) \primes^{-\alpha s}$,
  and hence
   $
  F( \primes^{-s})
  = \sum_{n=1}^\infty{a_{n}\frac{1}{n^{s}}}
  $
  for $\Real{(s)}>1$.
  But now  the identity theorem shows that  as desired $ F( \primes^{-s}) = D(s)$ on $\mathbb{C}_0$.
  This completes the proof.
  \end{proof}

  \vspace{2mm}

  The final theorem is the $\mathcal{H}_p$-analog of the preceding one.

  \vspace{2mm}

\begin{Theo}
\label{Theo:Holomorphicl2}
Let $1 \leq p < \infty$.
The Bohr transform $\BB$ defines an onto isometry from $H_{p}(B_{c_0} \cap \ell_{2}, X)$ into $\mathcal{H}^{+}_{p}(X)$, i.e.,
 $$H_{p}(B_{c_0} \cap \ell_{2}, X)  \equiv \mathcal{H}^{+}_{p}(X)\,. $$
 Moreover, if $D = \BB(F)$, then $F(\primes^{-s}) = \sum_{n}{a_{n} n^{-s}}$ whenever $\Real{(s)} > 1/2$.
 \end{Theo}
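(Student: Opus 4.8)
I would follow the blueprint of Theorem~\ref{Theo:Holomorphicc0}, but replace its Schwarz-lemma argument (which rests on the uniform bound $\|F_m\|_\infty\le\|T\|_\Lambda$, unavailable when $p<\infty$) by one that extracts convergence from the structure of the Poisson kernels; this is exactly where the choice $\SSS=\ell_2$ becomes forced. By Lemma~\ref{Lemm:holomorphicIsideDirichlet} the Bohr transform $\BB\colon H_p(B_{c_0}\cap\ell_2,X)\to\mathcal{H}^{+}_p(X)$ is a well-defined contraction, and it is injective since a function in $H_p(B_{c_0}\cap\ell_2,X)$ is determined by its monomial coefficients. So the whole task is: given $D=\sum_n a_n n^{-s}\in\mathcal{H}^{+}_p(X)$, construct $F\in H_p(B_{c_0}\cap\ell_2,X)$ with $\BB(F)=D$ and $\|F\|_{H_p(B_{c_0}\cap\ell_2,X)}\le\|D\|_{\mathcal{H}^{+}_p(X)}$; since $\BB$ is a contraction this automatically upgrades to $\|F\|=\|D\|$, so $\BB$ is an onto isometry.

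For the construction I would use Theorem~\ref{Theo:DirichletOperators} to pick $T\in\Lambda^{+}(E_{p^\ast}(\IDT),X)$ with $T(\omega^{-\alpha})=a_{\primes^\alpha}$ and $\|T\|_\Lambda=\|D\|_{\mathcal{H}^{+}_p(X)}$, and form the functions $F_m$ of \eqref{equa:definePartialVariables}, which by \eqref{equa:definePartialVariablesII} lie in $H_p(\mathbb{D}^m,X)\hookrightarrow H_p(B_{c_0}\cap\ell_2,X)$ with $\|F_m\|\le\|T\|_\Lambda$. The key step is that $(F_m)_m$ converges locally uniformly on $B_{c_0}\cap\ell_2$. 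Using Lemma~\ref{Lemm:equivPositAbsSum} (write $\|T(\psi)\|_X\le\xi^\ast(|\psi|)$ with $\|\xi^\ast\|=\|T\|_\Lambda$) and the positivity of the kernels, for $m<m'$ and $z\in B_{c_0}\cap\ell_2$ one has
\[
\|F_m(z)-F_{m'}(z)\|_X\le\xi^\ast\!\left(\prod_{n\le m}K(\omega_n,z_n)\left|1-\prod_{m<n\le m'}K(\omega_n,z_n)\right|\right).
\]
The two factors involve disjoint blocks of variables, $\int_{\mathbb{T}}K(\omega_n,z_n)\,d\omega_n=1$ for each $n$, and — crucially — $\sum_n|z_n|^2<\infty$. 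Hence $\big\|\prod_{n\le m}K(\cdot,z_n)\big\|_{L_q(\IDT)}\le\prod_n\|K(\cdot,z_n)\|_{L_q(\mathbb{T})}$ stays bounded (because $\|K(\cdot,\zeta)\|_{L_q(\mathbb{T})}=1+O(|\zeta|^2)$), while by Parseval
\[
\left\|\prod_{m<n\le m'}K(\cdot,z_n)-1\right\|_{L_2(\IDT)}^{2}=\prod_{m<n\le m'}\left(1+\frac{2|z_n|^2}{1-|z_n|^2}\right)-1\ \longrightarrow\ 0\quad(m\to\infty),
\]
uniformly in $m'$. Combining both via Hölder's inequality (choosing the exponents as, say, $q_1=q_2=2p^\ast$, and — since $2p^\ast>2$ — passing from the $L_2$-estimate to the required $L_{2p^\ast}$-estimate by log-convexity of the norms) one obtains $\|F_m(z)-F_{m'}(z)\|_X\to0$ as $m\to\infty$, uniformly for $z$ in any compact subset of $B_{c_0}\cap\ell_2$ (such a set has $\sup_z\|z\|_\infty<1$ and uniformly $\ell_2$-small tails). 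By the Weierstrass theorem in $\ell_2$, $F:=\lim_m F_m$ is then holomorphic on $B_{c_0}\cap\ell_2$; passing to the limit in the finite-dimensional restrictions of the $F_m$ (whose $H_p(\mathbb{D}^k,X)$-norms are $\le\|T\|_\Lambda$) gives $F\in H_p(B_{c_0}\cap\ell_2,X)$ with $\|F\|\le\|T\|_\Lambda$, and $c_\alpha(F)=\lim_m c_\alpha(F_m)=T(\omega^{-\alpha})=a_{\primes^\alpha}$, i.e. $\BB(F)=D$. (In the endpoint $p=1$, where $\xi^\ast$ is only a positive measure on $\IDT$, I would first arrange for this measure to be absolutely continuous with respect to the Haar measure — possible because the family $\{\|\BB^{-1}(D_\varepsilon)(\cdot)\|_X:\varepsilon>0\}$ is uniformly integrable, an F.\ and M.\ Riesz type phenomenon — after which the $L_q$-estimates above still apply.)

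It remains to identify $F(\primes^{-s})$ with the Dirichlet series. For $\Real(s)>1$ the series $\sum_n a_n n^{-s}$ converges absolutely, since $\|a_n\|_X=\|T(\omega^{-\alpha})\|_X\le\|T\|_\Lambda$; as $F$ is continuous and the truncations of $\primes^{-s}$ tend to $\primes^{-s}$ in $\ell_2$, this yields $F(\primes^{-s})=\sum_n a_n n^{-s}$ for $\Real(s)>1$, exactly as in Theorem~\ref{Theo:Holomorphicc0}. Now fix $\sigma>\tfrac12$: the set $\primes^{-\sigma}\,\overline{\mathbb{D}}^{\mathbb{N}}=\prod_n\primes_n^{-\sigma}\overline{\mathbb{D}}$ is a compact subset of $B_{c_0}\cap\ell_2$ (because $2^{-\sigma}<1$ and $\sum_n\primes_n^{-2\sigma}<\infty$ — this is precisely where $\ell_2$ and the abscissa $\tfrac12$ enter), and it contains $\primes^{-s}$ for every $s$ with $\Real(s)\ge\sigma$. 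Since $s\mapsto\primes^{-s}$ is holomorphic from $\mathbb{C}_{1/2}$ into $\ell_2$ and $F$ is holomorphic, $s\mapsto F(\primes^{-s})$ is bounded and holomorphic on $\mathbb{C}_\sigma$, and it agrees with $\sum_n a_n n^{-s}$ on $\mathbb{C}_{\max(1,\sigma)}$. A horizontal translation by $\sigma$ then puts us in the hypotheses of Bohr's fundamental Theorem~\ref{Biebersau}, which gives that $\sum_n a_n n^{-s}$ converges — uniformly on $\mathbb{C}_{\sigma+\delta}$ for each $\delta>0$ — with sum $F(\primes^{-s})$. Letting $\sigma\downarrow\tfrac12$ covers all of $\mathbb{C}_{1/2}$.

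The main obstacle is the locally uniform convergence $F_m\to F$ of the second paragraph: with no pointwise bound at our disposal for $p<\infty$, it has to be read off from the arithmetic of the Poisson products, and the exponents balance only because $z\in\ell_2$ (equivalently, because $\prod_{n>m}K(\omega_n,z_n)\to1$ in $L^2(\IDT)$ exactly when $\sum_n|z_n|^2<\infty$). A second, more technical, difficulty is the case $p=1$, where $E_{p^\ast}(\IDT)^\ast=M(\IDT)$ and one must pass to an absolutely continuous dominating measure before the estimates can be run.
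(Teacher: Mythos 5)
Your overall skeleton coincides with the paper's: pull back $D$ to $T\in\Lambda^{+}(E_{p^\ast}(\IDT),X)$ via Theorem \ref{Theo:DirichletOperators}, form the functions $F_m$ of \eqref{equa:definePartialVariables}, prove local uniform convergence on $B_{c_0}\cap\ell_2$, identify coefficients, and settle $F(\primes^{-s})=\sum_n a_n n^{-s}$ on $\mathbb{C}_{1/2}$ by absolute convergence for $\Real(s)>1$ plus Bohr's Theorem \ref{Biebersau} (your last paragraph is essentially the paper's argument). Where you diverge is the convergence step: the paper gets local uniform boundedness from the Cole--Gamelin pointwise estimate \eqref{equa:pointwiseEvaluation}, $\|f(z)\|_X\leq\|f\|_{H_p(\mathbb{D}^m,X)}\prod_n(1-|z_n|^2)^{-1/p}$, on the compact sets $K_a$, $a\in\ell_2\cap B_{c_0}$, and then runs the same Schwarz-lemma argument \eqref{equa:SwartzLemma} as in Theorem \ref{Theo:Holomorphicc0}; you instead pair the Poisson products against the dominating functional $\xi^\ast$ of Lemma \ref{Lemm:equivPositAbsSum} and use $L_q$-estimates ($\|K(\cdot,\zeta)\|_{L_q}=1+O_q(|\zeta|^2)$, the Parseval identity for the tail product, interpolation). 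For $1<p<\infty$ this is essentially sound, since then $\xi^\ast$ is a nonnegative density in $L_p(\IDT)$ and $2p^\ast<\infty$, and on a compact subset of $B_{c_0}\cap\ell_2$ one has a uniform sup bound $<1$ on the coordinates and uniformly small $\ell_2$-tails, so your estimates are uniform there. (Minor remark: since the two factors depend on disjoint blocks of coordinates, the $L_{p^\ast}$-norm of the product factors exactly; no H\"older splitting is needed.)

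The genuine gap is the endpoint $p=1$, which the theorem claims. There $E_{p^\ast}(\IDT)^{\ast}=M(\IDT)$, and your whole mechanism needs to pair the expression $\prod_{n\leq m}K(\cdot,z_n)\,\bigl|1-\prod_{m<n\leq m'}K(\cdot,z_n)\bigr|$ against the dominating object in a dual $L_q$ with $q>1$. Your proposed repair --- replace the dominating measure by an absolutely continuous one --- does not suffice even if it can be arranged (and your justification via uniform integrability of $\{\|\BB^{-1}(D_\varepsilon)(\cdot)\|_X\}$ is asserted, not proved; the clean route to absolute continuity would be the scalar Helson--Lowdenslager theorem applied to $x^\ast m$): the density is then only in $L_1(\IDT)$, so the H\"older step requires an $L_\infty$-bound on the Poisson expression, and $\bigl\|\prod_{n\leq m}K(\cdot,z_n)\bigl(1-\prod_{m<n\leq m'}K(\cdot,z_n)\bigr)\bigr\|_\infty$ grows like $\exp\bigl(c\sum_{n\leq m'}|z_n|\bigr)$, which is unbounded in $m'$ for $z\in\ell_2\setminus\ell_1$; truncating the density ($g=g\wedge M+(g-g\wedge M)$) fails for the same reason, because the small-$L_1$ remainder is integrated against these unbounded products. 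So as written your argument proves the theorem only for $1<p<\infty$. The paper's choice of tool is precisely what avoids this: the pointwise bound \eqref{equa:pointwiseEvaluation} holds for every $1\leq p<\infty$ (including $p=1$), gives uniform boundedness of $(F_m)$ on each $K_a$, and the Schwarz lemma then yields the uniform Cauchy property exactly as in the $H_\infty$ case; to close your proof you should either quote that estimate for $p=1$ or otherwise produce a substitute argument at this endpoint.
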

 \begin{proof}
We follow the strategy of the previous  proof. Indeed, our main tool this time is the following inequality, which is proved in \cite{ColeGamelin}, but can be easily extended to the vector-valued setting by composing with functionals: For each $f \in H_{p}(\mathbb{D}^m,X)$ and $z \in \mathbb{D}^{m}$
\begin{equation}
\label{equa:pointwiseEvaluation}
\| f(z)\|_X \leq \| f\|_{H_{p}(\mathbb{D}^m,X)} \prod_{n=1}^{m}{(1 - |z_n|^{2})^{-1/p}}\,.
\end{equation}
Take $D \in \mathcal{H}^{+}_{p}(X)$, and let $\BB^{-1}D = T \in \Lambda^{+}(E_{p^\ast},X)$ be the operator given by Theorem \ref{Theo:DirichletOperators}. For every $m \in \N$, let $F_{m}$ be as in \eqref{equa:definePartialVariables}, which by the discussion above belongs to $H_{p}(\mathbb{D}^{m},X)$ with $\| F_{m}\|_{H_{p}(\mathbb{D}^{m},X)} \leq \|T\|_\Lambda$. Given a sequence $a=(a_{n})_{n \in \N} \in \ell_{2} \cap B_{c_0}$, define the set $K_{a} := {\{z \in c_0\colon |z_{n}| \leq |a_{n}|\}}$ which is compact in $\ell_{2}$ . Then by \eqref{equa:pointwiseEvaluation} we see that
for every $z \in K_{a}$ and every $m$
\[ \| F_{m}(z)\|_X \leq \| T\|_{\Lambda^+} \prod_{n=1}^{\infty}{(1  - |a_{n}|^{2})^{-1/p}}\,. \]
This means that $(F_{m})_{m }$ is a uniformly  bounded sequence of holomorphic $X$-valued functions on $B_{c_0}$. Using \eqref{equa:SwartzLemma} and reasoning  as in the proof of the previous theorem, we  deduce that $(F_{m})_m$ converges uniformly on every set $K_{a} \subset B_{c_0}$ with $a \in \ell_{2} \cap B_{c_0}$. Since every compact subset of $B_{c_{0}} \cap \ell_2$ is contained in some $K_{a}$, this proves (by Montel's theorem) that there is a holomophic function $F:B_{c_{0}} \cap \ell_{2} \rightarrow X$  such  that  $F_{m} \rightarrow F$  uniformly on every compact set of $\ell_{2} \cap B_{c_0}$. From \eqref{equa:definePartialVariablesII} we deduce that
$$\|F\|_{H_{p}(B_{c_0} \cap \ell_{2},X)} = \sup_{m }\|F_{m}\|_{H_{p}(B_{c_0} \cap \ell_{2},X)} \leq \| T\|_{\Lambda^+}\,,$$
which shows (as above) that $F = \BB^{-1}D$ is the desired inverse image of $D$ under the Bohr transform.

Let us finish with the argument for the second statement in the theorem: We have  that $F(\primes^{-s}) = \sum_{n}{a_{n}n^{-s}}$ for $\Real{(s)}>1$ as the Dirichlet series and the monomial expansion of $F$ converge absolutely. Fix some $\varepsilon > 0$, then the function $s \mapsto F(\primes^{-s})$ is bounded on $\mathbb{C}_{1/2 + \varepsilon}$ and extends $D(s)$. By Bohr's Fundamental Theorem \ref{Biebersau} we conclude that $\sum_{n}{a_{n}n^{-s}}$ converges uniformly on $\mathbb{C}_{1/2 + \varepsilon}$. Therefore both holomorphic functions $F(\mathfrak{p}^{-s})$ and $D(s)$ must coincide on $\mathbb{C}_{1/2 + \varepsilon}$.
\end{proof}

\vspace{2mm}

The proof of the previous Theorem \ref{Theo:Holomorphicl2} relies on inequality \eqref{equa:pointwiseEvaluation}, which forces us to work in
$z \in \ell_{2}$ so that the infinite product is convergent. This contrasts with the case $p=\infty$ of Theorem \ref{Theo:Holomorphicc0} in which can be taken $\SSS = c_{0}$, so it is natural to ask if we can consider a bigger Banach space sequence $\SSS$ for some $1 \leq p < \infty$, so that we can identify $H_{p}(B_{c_0} \cap \SSS,X)$ and $\mathcal{H}^{+}_{p}(X)$.

\begin{Rema}
\label{complete}
Let $1 \leq p < \infty$. If $(H_{p}(B_{c_0} \cap \SSS,\mathbb{C}), \| \cdot\|_{p})$ is complete, then $\SSS \subset \ell_{2}$.
\end{Rema}

\begin{proof}
We argue by contradiction. Suppose that there is $a=(a_{n})_{n \in \N} \in \SSS \setminus \ell_{2}$. This means that we can find $(\xi_{n})_{n \in \N} \in \ell_{2}$ such that $\big(\left|\sum_{n=1}^{m}{\xi_{n}a_{n}}\right|\big)_m$ is unbounded. Without loss of generality we can assume that $a \in B_{c_{0}}$. Consider the sequence of trigonometric polynomials $Q_{m}(z) = \sum_{k=1}^{m}{\xi_{k} z_{k}}\,,\, m \in \N$. By Khintchine's inequality, there exists a constant $C > 0$ (independent of $\xi$) such that for every $1 \leq n < m$
\[ \mbox{$\| Q_{m} - Q_{n}\|_{p} \leq C \Big( \sum_{n < k \leq m}{|\xi_{k}|^{2}} \Big)^{1/2}$}. \]
This means that $(Q_{m})_{m }$ is a Cauchy sequence in $H_{p}(B_{c_0} \cap \SSS,X)$. But this space is  assumed to be complete, so it converges to some $F \in H_{p}(B_{c_0} \cap \SSS,X)$ which must be of the form $F \equiv \sum_{k}{\xi_{k} z_{k}}$. By continuity, $F(a)= \lim_{m}{\sum_{n=1}^{m}{\xi_{n} a_{n}}}$, contradicting the choice of $(\xi_{n})_{n \in \N}$.
\end{proof}

A careful look at the proofs of Theorems \ref{Theo:Holomorphicl2} and \ref{Theo:Holomorphicc0} reveals as an  underlying idea what we call {\it a Hilbert's criterion}, i.e.,  a criterion to decide whether a formal power series is generated by a  holomorphic function in $H_{p}(B_{c_{0}} \cap \mathcal{S}_{p},X)$, where  $\mathcal{S}_{p} = c_{0}$ if $p=+\infty$ and $\mathcal{S}_{p} = \ell_{2}$ if $1 \leq p<+\infty$.

\begin{Coro}
\label{Coro:HilbertCriterion}
Let $1 \leq p \leq +\infty$.
Given a family $(c_{\alpha})_{\alpha}$ in $X$, the following assertions are equivalent:
\begin{enumerate}
\item[(i)] There is $F \in H_{p}(B_{c_{0}} \cap \mathcal{S}_{p},X)$ satisfying $c_{\alpha}(F) = c_{\alpha}$ for each $\alpha \in \N_{0}^{(\N)}$.\vspace{2mm}
\item[(ii)] There exists a sequence $(f_{m})$ in  $H_{p}(\mathbb{D}^{m},X)$ such that
\begin{enumerate}
\vspace{1mm}
\item[(ii.1)] $c_{\alpha}(f_{m}) = c_{\alpha}$ for each $\alpha \in \N_{0}^{m}$
\vspace{1mm}
\item[(ii.2)] $\sup_{m \in \N}{\| f_{m}\|_{H_{p}(\mathbb{D}^m,X)}} < + \infty$
\end{enumerate}
\end{enumerate}
\vspace{1mm}
Moreover, in that case $\| F\|_{H_{p}(B_{c_0} \cap \mathcal{S}_{p},X)} = \sup_{m \in \N}{\| f_{m}\|_{H_{p}(\mathbb{D}^m,X)}}$.
\end{Coro}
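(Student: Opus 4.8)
The plan is to obtain both implications, together with the norm identity, by transporting information through the isometric identifications already established: via the Bohr transform one has $H_{p}(B_{c_0}\cap\mathcal{S}_p,X)\equiv\mathcal{H}^{+}_{p}(X)$ by Theorem \ref{Theo:Holomorphicc0} when $p=\infty$ (where $\mathcal{S}_p=c_0$) and by Theorem \ref{Theo:Holomorphicl2} when $1\le p<\infty$ (where $\mathcal{S}_p=\ell_2$), and $\mathcal{H}^{+}_{p}(X)\equiv\Lambda^{+}(E_{p^\ast}(\IDT),X)$ by Theorem \ref{Theo:DirichletOperators}; under these identifications the monomial coefficients $c_\alpha(F)$, the Dirichlet coefficients $a_{\primes^\alpha}$ and the Fourier coefficients $T(\omega^{-\alpha})$ all correspond to one another. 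The implication (i)$\Rightarrow$(ii) is then immediate: given $F\in H_{p}(B_{c_0}\cap\mathcal{S}_p,X)$ put $f_m:=F_m$, the restriction of $F$ to the first $m$ variables; by the discussion preceding Lemma \ref{Lemm:holomorphicIsideDirichlet} one has $f_m\in H_p(\mathbb{D}^m,X)$, $c_\alpha(f_m)=c_\alpha(F)=c_\alpha$ for $\alpha\in\N_0^m$, and $\sup_m\|f_m\|_{H_p(\mathbb{D}^m,X)}=\|F\|_{H_{p}(B_{c_0}\cap\mathcal{S}_p,X)}<\infty$ by the very definition of the latter norm, so (ii.1) and (ii.2) hold.

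For (ii)$\Rightarrow$(i), set $C:=\sup_m\|f_m\|_{H_p(\mathbb{D}^m,X)}$ and observe that hypothesis (ii.2) supplies exactly the input needed to rerun the construction from the proof of Lemma \ref{Lemm:holomorphicIsideDirichlet}: for every $m$ and every $0<r<1$ one has $\|f_m(r\,\cdot)\|_{L_p(\mathbb{T}^m,X)}\le\|f_m\|_{H_p(\mathbb{D}^m,X)}\le C$, so the family $\{\,\|f_m(r\,\cdot)\|_X:0<r<1,\ m\in\N\,\}$ is contained in $C\cdot B_{L_{p}(\IDT)}$ (in $C\cdot B_{M(\IDT)}$ if $p=1$). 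Define $T$ on $\spn\{\omega^{\alpha}:\alpha\in\mathbb{Z}^{(\N)}\}$ by $T(\omega^{-\alpha})=c_\alpha$ for $\alpha\in\N_0^{(\N)}$ and $T(\omega^{-\alpha})=0$ otherwise; using orthogonality together with (ii.1), exactly as in Lemma \ref{Lemm:holomorphicIsideDirichlet}, one checks that $\|T(\psi)\|_X\le\xi^\ast(|\psi|)$ for every trigonometric polynomial $\psi$ in infinitely many variables, where $\xi^\ast$ is a weak$^\ast$ cluster point of the above family taken along $r\uparrow1$ and $m\to\infty$; thus $\|\xi^\ast\|_{E_{p^\ast}(\IDT)^\ast}\le C$ and, after extending $T$ by density and invoking Lemma \ref{Lemm:equivPositAbsSum}, $T\in\Lambda^{+}(E_{p^\ast}(\IDT),X)$ with $\|T\|_\Lambda\le C$. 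By Theorem \ref{Theo:DirichletOperators}, $D:=\BB(T)\in\mathcal{H}^{+}_{p}(X)$ with $\|D\|_{\mathcal{H}^{+}_{p}(X)}=\|T\|_\Lambda\le C$, and Theorem \ref{Theo:Holomorphicc0} (for $p=\infty$) or Theorem \ref{Theo:Holomorphicl2} (for $1\le p<\infty$) yields $F\in H_{p}(B_{c_0}\cap\mathcal{S}_p,X)$ with $\BB(F)=D$; in particular $c_\alpha(F)=c_\alpha$ for all $\alpha\in\N_0^{(\N)}$ and $\|F\|_{H_{p}(B_{c_0}\cap\mathcal{S}_p,X)}=\|D\|_{\mathcal{H}^{+}_{p}(X)}\le C$.

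It remains to record the norm identity. The previous step already gives $\|F\|_{H_{p}(B_{c_0}\cap\mathcal{S}_p,X)}\le\sup_m\|f_m\|_{H_p(\mathbb{D}^m,X)}$; conversely, $F_m$ and $f_m$ are both elements of $H_p(\mathbb{D}^m,X)$ with the same Taylor coefficients $c_\alpha$ ($\alpha\in\N_0^m$), hence $f_m=F_m$ for every $m$, and therefore $\sup_m\|f_m\|_{H_p(\mathbb{D}^m,X)}=\sup_m\|F_m\|_{H_p(\mathbb{D}^m,X)}=\|F\|_{H_{p}(B_{c_0}\cap\mathcal{S}_p,X)}$, the last equality being again the definition of this norm. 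The only delicate point in the whole argument is the construction of $T$ in (ii)$\Rightarrow$(i), that is, recognising that the uniform bound (ii.2) is precisely what powers the weak$^\ast$-cluster-point argument of Lemma \ref{Lemm:holomorphicIsideDirichlet}; everything after that is bookkeeping with the isometries of Theorems \ref{Theo:DirichletOperators}, \ref{Theo:Holomorphicc0} and \ref{Theo:Holomorphicl2}.
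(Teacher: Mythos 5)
Your argument is correct, but for the implication (ii)$\Rightarrow$(i) it takes a genuinely different route from the paper. The paper's proof is essentially two lines: (i)$\Rightarrow$(ii) by restriction, and for the converse it observes that the gluing argument inside the proofs of Theorems \ref{Theo:Holomorphicc0} and \ref{Theo:Holomorphicl2} --- the Schwarz-type estimate \eqref{equa:SwartzLemma}, for $p<\infty$ the pointwise bound \eqref{equa:pointwiseEvaluation}, and Montel's theorem --- never uses that the functions $F_m$ come from an operator $T$; it only uses the uniform bound (ii.2) and the coefficient compatibility (ii.1), which forces $f_M(\,\cdot\,,0)=f_m$, so it runs verbatim on the given sequence $(f_m)$ and produces $F$ directly with $\|F\|\leq\sup_m\|f_m\|_{H_p(\mathbb{D}^m,X)}$. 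You instead pass from $(f_m)$ to an operator $T\in\Lambda^{+}(E_{p^\ast}(\IDT),X)$ by adapting the weak$^\ast$-cluster-point construction of Lemma \ref{Lemm:holomorphicIsideDirichlet} (which in the paper starts from a function $F$, but, as you correctly note, only needs the bounded family $\{\|f_m(r\,\cdot\,)\|_X\}$ together with orthogonality and (ii.1)), then to $D=\BB(T)\in\mathcal{H}^{+}_p(X)$ via Theorem \ref{Theo:DirichletOperators}, and finally to $F$ via the onto isometries of Theorems \ref{Theo:Holomorphicc0} and \ref{Theo:Holomorphicl2} used as black boxes. This is legitimate and yields the same norm control; what it buys is that you never reopen the proofs of the holomorphy theorems, at the price of reopening (and slightly generalizing) the proof of Lemma \ref{Lemm:holomorphicIsideDirichlet} and of a longer chain of identifications --- so both arguments ultimately lean on reusing an earlier proof rather than an earlier statement. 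One cosmetic point: for $p=\infty$ the equality $\|F\|_{H_\infty(B_{c_0},X)}=\sup_m\|F_m\|_{H_\infty(\mathbb{D}^m,X)}$ is not literally ``the definition of the norm'' (that norm is the supremum over all of $B_{c_0}$); the inequality $\geq$ is trivial, and the reverse follows either from your chain $\|F\|\leq\|T\|_\Lambda\leq\sup_m\|f_m\|$ or from continuity of $F$ together with the fact that finite truncations of $z\in B_{c_0}$ converge to $z$ in $c_0$, so nothing breaks.
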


\begin{proof}
If (i) is satisfied, then, taking for $f_{m}:=F_{m}$ the restriction of $F$ to $\mathbb{D}^{m}\subset B_{c_0} \cap \mathcal{S}_{p}$, we have that (ii) is satisfied with $$\sup_{m \in \N}{\| F_{m}\|_{H_{p}(\mathbb{D}^m,X)}} \leq \| F\|_{H_{p}(B_{c_{0}} \cap \mathcal{S}_{p},X)}\,.$$ To see the converse, notice that the proofs of Theorems \ref{Theo:Holomorphicl2} and \ref{Theo:Holomorphicc0} show precisely that, starting with  $(f_{m})_{m \in \N}$, there is $F \in H_{p}(B_{c_{0}} \cap \mathcal{S}_{p},X)$ whose restriction $F_{m}$ to $\mathbb{D}^{m}$ equals $f_{m}$ and satisfies $\| F\|_{H_{p}(B_{c_{0}} \cap \mathcal{S}_{p},X)} \leq \sup_{m \in \N}{\| F_{m}\|_{H_{p}(\mathbb{D}^m,X)}}$.
\end{proof}

\section{Brother's Riesz and more}
\label{sec:equivalencesARNP}

Recall that if $(\Omega,\mathcal{B})$ is a measure space, then a vector measure $m:\mathcal{B} \rightarrow X$ is of bounded variation if
\[ |m|(\Omega) = \sup_{\pi}{\sum_{A \in \pi}{\|m(A)\|_X}} < +\infty\,, \]
where the supremum is taken over all finite partitions of $\Omega$ into (disjoint) elements of $\mathcal{B}$. If $\Omega$ is a compact space and $\mathcal{B} = \mathcal{B}(\Omega)$ is the Borel $\sigma$-algebra on $\Omega$, consider $M(\mathcal{B}, X)$ to be the space of all regular countably additive vector measures of bounded variation. This is a Banach space when endowed with the variation as norm. Following \cite[p. 380, Theorem 2]{Dinculeanu}, we have the natural isometric identification
\begin{equation}
\label{equa:singerDinculeanu}
\begin{split}
& M(\mathcal{B}, X) \equiv \Lambda(C(\Omega), X)\,, \hspace{3mm} m \mapsto  T_{m}(f) = \int_{\Omega}{f \: d m}\,. \\
\end{split}
\end{equation}
If we restrict ourselves to $\Omega = \IDT$, then we write $M(\mathbb{T}^{\mathbb{N}}, X ) = M(\mathcal{B}(\mathbb{T}^{\mathbb{N}}), X )$,  and the previous identification gives
\begin{equation}
\label{equa:singerDinculeanu2}
M^{+}(\IDT, X )  \equiv \Lambda^{+}(C(\IDT), X)\,,
\end{equation}
where $M^{+}(\IDT, X )$ denotes the subspace of $M(\mathbb{T}^{\mathbb{N}}, X )$ consisting of all vector measures $m$ that are \emph{analytic}, i.e. satisfy
\begin{equation}
\label{equa:analyticMeasureCondition}
\widehat{m}(\alpha) := \int_{\IDT}{\omega^{-\alpha} \: d m}=0 \: \mbox{ for every $\alpha \in \mathbb{Z}^{(\N)} \setminus \N_{0}^{(\N)}$}.
\end{equation}

We now formulate a vector-valued version of the classical Brother's Riesz Theorem in the infinite dimensional torus. The scalar case is a well-known result for topological groups from \cite{HelsonLowdenslager}, and an alternative and different approach for the infinite dimensional torus was recently given in \cite[Corollary 1]{AOS2015}. Recall that the connection between ARNP and the validity of a vector-valued Brother's Riesz Theorem for the one-dimensional torus was firstly noticed by Dowling \cite{DowlingARNP}.

\begin{Theo}
\label{Theo:analyticMeasures}
A Banach space $X$ has the ARNP if and only if for every  $m \in M^{+}(\mathcal{B}(\IDT), X )$ there exists $f \in H_{1}(\IDT,X)$ satisfying
\begin{equation}
\label{more}
m(A) = \int_{A}{f (\omega)\: d\omega}  \: \mbox{ for every $A \in \BB(\IDT)$}\,.
\end{equation}
\end{Theo}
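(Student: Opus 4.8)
The plan is to read the statement off the isometric identifications already established. Combining the Singer--Dinculeanu type identification \eqref{equa:singerDinculeanu2} with the $p=1$ case of Theorem \ref{Theo:DirichletOperators} (recall $1^{\ast}=\infty$, so $E_{1^{\ast}}(\IDT)=C(\IDT)$) one obtains the chain of onto isometries
\[
M^{+}(\mathcal{B}(\IDT),X)\;\equiv\;\Lambda^{+}(C(\IDT),X)\;\equiv\;\mathcal{H}^{+}_{1}(X).
\]
Unwinding it, a measure $m\in M^{+}(\mathcal{B}(\IDT),X)$ is sent to the Dirichlet series $D$ with coefficients $a_{\primes^{\alpha}}(D)=T_{m}(\omega^{-\alpha})=\int_{\IDT}\omega^{-\alpha}\,dm=\widehat{m}(\alpha)$ for $\alpha\in\N_{0}^{(\N)}$. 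On the other end we have $\mathcal{H}_{1}(X)\equiv H_{1}(\IDT,X)$ by definition, and to $f\in H_{1}(\IDT,X)$ we associate the vector measure $m_{f}(A):=\int_{A}f\,d\omega$, which is regular, countably additive, of bounded variation with $|m_{f}|(\IDT)=\|f\|_{L_{1}(\IDT,X)}$, and satisfies $\widehat{m_{f}}(\alpha)=\widehat{f}(\alpha)$ for all $\alpha$. Thus the theorem amounts to saying that the map $m\mapsto D$ lands inside (all of) $\mathcal{H}_{1}(X)$ precisely when $X$ has the ARNP.

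For the direct implication, let $X$ have the ARNP and fix $m\in M^{+}(\mathcal{B}(\IDT),X)$; then $D\in\mathcal{H}^{+}_{1}(X)$ as above, and Theorem \ref{Theo:ARNPimpliesOnto} gives $\mathcal{H}^{+}_{1}(X)=\mathcal{H}_{1}(X)=H_{1}(\IDT,X)$, so $D=\BB(f)$ for some $f\in H_{1}(\IDT,X)$. Then $\widehat{f}(\alpha)=a_{\primes^{\alpha}}(D)=\widehat{m}(\alpha)$ for $\alpha\in\N_{0}^{(\N)}$, while for $\alpha\in\mathbb{Z}^{(\N)}\setminus\N_{0}^{(\N)}$ both sides vanish ($f\in H_{1}(\IDT,X)$ and $m$ analytic); hence $m$ and $m_{f}$ have the same Fourier coefficients. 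Since trigonometric polynomials span a dense subspace of $C(\IDT)$, a regular vector measure of bounded variation is determined by its Fourier coefficients (equivalently, $m\mapsto T_{m}$ in \eqref{equa:singerDinculeanu} is injective and $T_{m}$ is pinned down by its values on trigonometric polynomials), so $m=m_{f}$, i.e.\ \eqref{more} holds.

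For the converse, suppose every $m\in M^{+}(\mathcal{B}(\IDT),X)$ satisfies \eqref{more}. Take $D\in\mathcal{H}^{+}_{1}(X)$; by Theorem \ref{Theo:DirichletOperators} and \eqref{equa:singerDinculeanu2} there is $m\in M^{+}(\mathcal{B}(\IDT),X)$ with $\widehat{m}(\alpha)=a_{\primes^{\alpha}}(D)$, and by hypothesis $m=m_{f}$ for some $f\in H_{1}(\IDT,X)$. Then $\widehat{f}(\alpha)=\widehat{m}(\alpha)=a_{\primes^{\alpha}}(D)$, so $\BB(f)$ and $D$ have the same coefficients; since elements of $\mathcal{H}^{+}_{1}(X)$ are determined by their coefficients and $\mathcal{H}_{1}(X)\subset\mathcal{H}^{+}_{1}(X)$ isometrically (Theorem \ref{Rema1}), we conclude $D=\BB(f)\in\mathcal{H}_{1}(X)$. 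Hence $\mathcal{H}^{+}_{1}(X)=\mathcal{H}_{1}(X)$, and by the implication (iii)$\Rightarrow$(i) of Theorem \ref{Theo:ARNPimpliesOnto} the space $X$ has the ARNP.

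There is no serious obstacle here: essentially all the work has been done in Theorems \ref{Theo:DirichletOperators} and \ref{Theo:ARNPimpliesOnto}. The only points that deserve a (routine) line of justification are that the various identifications are compatible on the level of Fourier/monomial coefficients, that $m_{f}$ is a well-defined regular vector measure of bounded variation (standard properties of the Bochner integral against Haar measure), and the uniqueness of Fourier coefficients of a vector measure on $\IDT$ indicated above; none of these presents any real difficulty.
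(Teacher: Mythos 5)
Your proof is correct and takes essentially the same route as the paper: both reduce the statement, via the Bohr-transform identification $\Lambda^{+}(C(\IDT),X)\equiv\mathcal{H}^{+}_{1}(X)$ from Theorem \ref{Theo:DirichletOperators}, the identification \eqref{equa:singerDinculeanu2}, and density of the trigonometric polynomials in $C(\IDT)$, to the equivalence ARNP $\Leftrightarrow\ \mathcal{H}^{+}_{1}(X)=\mathcal{H}_{1}(X)$ of Theorem \ref{Theo:ARNPimpliesOnto}. Your version merely spells out explicitly the coefficient bookkeeping and the two implications that the paper's proof compresses into one chain of equivalences.
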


\begin{proof}
By Theorems \ref{Theo:DirichletOperators}  and \ref{Theo:ARNPimpliesOnto}, $X$ has the ARNP if and only if for each operator $T \in \Lambda^{+}(E_{\infty},X)$ there is a (unique) function $f \in H_1(\mathbb{T}^{\mathbb{N}},X)$ such that
$
T(\omega^{-\alpha}) =\hat{f}(\alpha)
$
 for all $\alpha \in \mathbb{N}_0^{(\mathbb{N})}$\,.
But since the trigonometric polynomials are dense in $C(\mathbb{T}^{\mathbb{N}})$, this is equivalent to the fact that for every such $T$ there is a function $f \in H_1(\mathbb{T}^{\mathbb{N}},X)$ satisfying
\begin{equation}
\label{equa:representingOperator}
T(g) = \int_{\mathbb{T}^{\mathbb{N}}} f(\omega) g(\omega) d\omega \,\,\,\text{ for each }\,\,\, g \in C(\mathbb{T}^{\mathbb{N}})\,.
\end{equation}
Using the identification \eqref{equa:singerDinculeanu}, we conclude that $f$ satisfies \eqref{equa:representingOperator} if and only if
 the measure $m$ identified with $T$
satisfies \eqref{equa:analyticMeasureCondition}. This finishes the proof.
\end{proof}

We give now another representation theorem  for spaces of vector-valued functions, relating duality and the ARNP. Consider the embedding
\[ E \otimes X \hookrightarrow \Lambda(E^{\ast}, X)\,, \xi \otimes x \mapsto [\xi^{\ast} \mapsto \xi^{\ast}(\xi) x] \,. \]
If we denote by $E \tilde{\otimes}_{\Lambda} X$ the completion of $E \otimes X$ with the induced norm $\| \cdot \|_{\Lambda}$, then it is shown in \cite[p. 277]{ScaheferBanachLattice} that the following isometric equality holds:
\begin{equation}
\label{equa:dualTensorProduct}
 \Lambda(E, X^{\ast}) = (E \tilde{\otimes}_{\Lambda} X)^\ast\,, \hspace{3mm} T \mapsto [\xi \otimes x \mapsto (T \xi)x]\,.
\end{equation}
Moreover, if we restrict ourselves to the Banach lattices $E_{p}(\Omega), \,1 \leq p \leq +\infty$ defined in the introduction, then there is a canonical isometric isomorphism (see \cite[Examples, p. 274-5]{ScaheferBanachLattice})
\begin{equation}
\label{equa:tensorProductVectorValued}
E_{p}(\Omega) \tilde{\otimes}_{\Lambda} X = E_{p}(\Omega, X).
\end{equation}
Combining \eqref{equa:dualTensorProduct} and \eqref{equa:tensorProductVectorValued}, we obtain for each $1 \leq p \leq \infty$ the isometric identity
\begin{equation}
\label{equa:dualityEp}
\Lambda(E_{p}(\Omega), X^{\ast}) = E_{p}(\Omega, X)^\ast.
\end{equation}
For every $1 \leq p \leq +\infty$, define $$E_{p}(\IDT,X)^{\ast}_{+}$$ as the (weak$^\ast$-closed) subspace of $E_{p}(\IDT,X)^{\ast}$ made of those functionals which vanish on elements of the form $\omega^{-\alpha} \otimes x$ for every $\alpha \in \mathbb{Z}^{(\mathbb{N})} \setminus \mathbb{N}_0^{(\mathbb{N})}$ and $x \in X$.
Restricting the identity from  \eqref{equa:dualityEp} to $\Lambda^{+}(E_{p}(\mathbb{T}^{\mathbb{N}}\, X^{\ast})$, and using Theorem \ref{Theo:DirichletOperators}, we get the following result.

\begin{Prop} \label{Finish}
For every  $1 \leq p \leq \infty$  the Bohr transform establishes a canonical isometric isomorphism
\[ E_{p}(\IDT,X)^{\ast}_{+}  \equiv \mathcal{H}_{p}^{+}(X^{\ast}), \]
which associates to each functional $f$ on $E_{p}(\IDT,X)$ the Dirichlet series $\sum_{n}{a_{n}^{\ast} n^{-s}}$ in $\mathcal{H}_{p}^{+}(X^{\ast})$, whose coefficients $a^{\ast}_{n} \in X^{\ast}$ satisfy
\[ a_{\mathfrak{p}^{\alpha}}^{\ast}(x) = f(\omega^{-\alpha} \otimes x)\, \mbox{ for every $\alpha \in \N_{0}^{(\N)}$ and $x \in X$.}\]
\end{Prop}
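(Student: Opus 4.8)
The plan is to read the stated identification as the composition of two onto isometries that are already at our disposal: the vector-valued duality \eqref{equa:dualityEp}, read at the exponent $p$ on the domain and then restricted to the relevant annihilator subspace, followed by the Bohr-transform isometry of Theorem \ref{Theo:DirichletOperators} applied with $X^{\ast}$ in the role of $X$. No genuinely new estimate is needed; the work is to match the two ``one-sided'' subspaces and to track coefficients through the composition.

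First I would make \eqref{equa:dualityEp} explicit on the domain side, i.e. at exponent $p$. Combining \eqref{equa:dualTensorProduct} with \eqref{equa:tensorProductVectorValued} yields the onto isometry
\[
\Lambda(E_{p}(\IDT), X^{\ast}) = E_{p}(\IDT,X)^{\ast}\,, \qquad T \mapsto [\, \xi \otimes x \mapsto (T\xi)(x)\,]\,.
\]
The one point that really must be checked is that this isometry carries the subspace $\Lambda^{+}(E_{p}(\IDT), X^{\ast})$ exactly onto $E_{p}(\IDT,X)^{\ast}_{+}$. This is a direct evaluation on the monomial generators: if $f \leftrightarrow T$ under the isometry, then $f(\omega^{-\alpha}\otimes x) = (T(\omega^{-\alpha}))(x)$ for every $\alpha \in \mathbb{Z}^{(\N)}$ and every $x \in X$. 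Hence $f$ vanishes on all elements $\omega^{-\alpha}\otimes x$ with $\alpha \in \mathbb{Z}^{(\N)}\setminus\N_{0}^{(\N)}$ if and only if $T(\omega^{-\alpha}) = 0$ for all such $\alpha$, that is, if and only if $T \in \Lambda^{+}(E_{p}(\IDT), X^{\ast})$. Since the monomials $\omega^{\alpha}$ span a dense subspace of $E_{p}(\IDT)$ and the elementary tensors $\omega^{\alpha}\otimes x$ a dense subspace of $E_{p}(\IDT,X)$, these vanishing conditions characterize the two (weak$^{\ast}$-closed) subspaces completely, so the restriction of the isometry is onto and we obtain $E_{p}(\IDT,X)^{\ast}_{+} \equiv \Lambda^{+}(E_{p}(\IDT), X^{\ast})$.

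Second, I would invoke Theorem \ref{Theo:DirichletOperators} with $X^{\ast}$ in place of $X$, which identifies $\Lambda^{+}(E_{p}(\IDT), X^{\ast})$ isometrically with $\mathcal{H}^{+}_{p}(X^{\ast})$ through the Bohr transform $\BB$, sending an operator $T$ to the Dirichlet series $\sum_{n} a^{\ast}_{n} n^{-s}$ with $a^{\ast}_{\primes^{\alpha}} = T(\omega^{-\alpha}) \in X^{\ast}$. Composing the two onto isometries yields the asserted isometric isomorphism $E_{p}(\IDT,X)^{\ast}_{+} \equiv \mathcal{H}^{+}_{p}(X^{\ast})$, and following a functional $f$ through both steps produces exactly the coefficient formula in the statement: from $a^{\ast}_{\primes^{\alpha}} = T(\omega^{-\alpha})$ and $(T(\omega^{-\alpha}))(x) = f(\omega^{-\alpha}\otimes x)$ we read off $a^{\ast}_{\primes^{\alpha}}(x) = f(\omega^{-\alpha}\otimes x)$ for every $\alpha \in \N_{0}^{(\N)}$ and every $x \in X$.

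I expect the assembly itself to be routine; the only step that demands care is the first one, namely verifying that the abstract duality isomorphism restricts to a \emph{bijection} between the two annihilator subspaces rather than a mere isometric embedding. The mild obstacle is that each subspace is cut out by infinitely many annihilation conditions indexed by $\alpha \in \mathbb{Z}^{(\N)}\setminus\N_{0}^{(\N)}$ and $x \in X$, so one must use the density of the trigonometric polynomials (and of the elementary tensors) to be sure that the conditions $f(\omega^{-\alpha}\otimes x)=0$ and $T(\omega^{-\alpha})=0$ determine $E_{p}(\IDT,X)^{\ast}_{+}$ and $\Lambda^{+}(E_{p}(\IDT), X^{\ast})$ respectively, and that the weak$^{\ast}$-closedness matches on both sides; once this is in place, surjectivity of the restriction — and hence the full isometric isomorphism — follows immediately.
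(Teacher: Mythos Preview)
Your proposal is correct and follows exactly the route the paper takes: the paper's entire justification is the single sentence preceding the proposition, which says to restrict the duality \eqref{equa:dualityEp} to $\Lambda^{+}(E_{p}(\IDT), X^{\ast})$ and then apply Theorem \ref{Theo:DirichletOperators}, and you have simply unpacked both steps with the appropriate care about matching the annihilator subspaces and tracking the Fourier/Dirichlet coefficients. Nothing further is needed.
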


\bigskip

If we now again allow the ARNP to enter the stage, then we arrive at the following:

\begin{Theo}
\label{Coro:H1dual}
Given a Banach space $X$, the following assertions are equivalent:
\begin{enumerate}
\item[(i)] $X^{\ast}$ has the ARNP.
\item[(ii)] $\mathcal{H}_{p}(X^{\ast})$ is a dual Banach space for every (resp. some) $1 \leq p \leq +\infty$.
\item[(iii)] $ E_{p}(\IDT, X)^{\ast}_{+} \equiv \mathcal{H}_{p}(X^{\ast}) $  for every (resp. some) $1 \leq p \leq +\infty$.
\end{enumerate}
\end{Theo}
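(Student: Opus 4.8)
I would prove the cycle (i)$\Rightarrow$(iii)$\Rightarrow$(ii)$\Rightarrow$(i), the first two implications being formal and the last one carrying all the analytic content. The two engines are Proposition~\ref{Finish}, which (via the Bohr transform) identifies $\mathcal{H}^{+}_{q}(X^{\ast})$ with $E_{q}(\IDT,X)^{\ast}_{+}=(E_{q}(\IDT,X)/N)^{\ast}$, where $N$ denotes the closed linear span in $E_{q}(\IDT,X)$ of the elements $\omega^{-\alpha}\otimes x$ with $\alpha\in\mathbb{Z}^{(\N)}\setminus\N_{0}^{(\N)}$; and Theorem~\ref{Theo:ARNPimpliesOnto} applied to the space $X^{\ast}$, which says that the ARNP of $X^{\ast}$ is equivalent to the equality $\mathcal{H}^{+}_{q}(X^{\ast})=\mathcal{H}_{q}(X^{\ast})$ for some, hence every, $1\le q\le\infty$. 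Apart from these I use only the isometric inclusion $\mathcal{H}_{q}(X^{\ast})\hookrightarrow\mathcal{H}^{+}_{q}(X^{\ast})$ of Theorem~\ref{Rema1} and the fact that for $D\in\mathcal{H}^{+}_{q}(X^{\ast})$ the translates $D_{\varepsilon}$ lie in $\mathcal{H}_{q}(X^{\ast})$, satisfy $\|D_{\varepsilon}\|_{\mathcal{H}_{q}}\le\|D\|_{\mathcal{H}^{+}_{q}}$ and tend to $D$ coefficientwise as $\varepsilon\to 0^{+}$ (Corollary~\ref{Coro:HpUniformAbcissa}).

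For (i)$\Rightarrow$(iii): if $X^{\ast}$ has the ARNP then Theorem~\ref{Theo:ARNPimpliesOnto} gives $\mathcal{H}^{+}_{p}(X^{\ast})=\mathcal{H}_{p}(X^{\ast})$ for every $p$, and combining with Proposition~\ref{Finish} yields $E_{p}(\IDT,X)^{\ast}_{+}\equiv\mathcal{H}_{p}(X^{\ast})$ for every $p$, which is (iii). For (iii)$\Rightarrow$(ii): the subspace $E_{p}(\IDT,X)^{\ast}_{+}$ of the dual space $E_{p}(\IDT,X)^{\ast}$ is \ws-closed (it is cut out by the vanishing of the \ws-continuous evaluations against the fixed vectors $\omega^{-\alpha}\otimes x$), hence is itself a dual Banach space, with predual $E_{p}(\IDT,X)/N$; by (iii), so is $\mathcal{H}_{p}(X^{\ast})$.

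The substance is (ii)$\Rightarrow$(i). Suppose $\mathcal{H}_{p}(X^{\ast})=Y^{\ast}$ isometrically for some $1\le p\le\infty$; I want to deduce $\mathcal{H}_{p}(X^{\ast})=\mathcal{H}^{+}_{p}(X^{\ast})$, which by Theorem~\ref{Theo:ARNPimpliesOnto} gives the ARNP of $X^{\ast}$. Write $\mathcal{H}^{+}_{p}(X^{\ast})=(E_{p}(\IDT,X)/N)^{\ast}$ as above. Inside this dual space $\mathcal{H}_{p}(X^{\ast})$ is norm-closed and, via the translates $D_{\varepsilon}$ (uniformly norm-bounded and coefficientwise convergent, hence \ws-convergent since the $\omega^{-\alpha}\otimes x$ are dense in $E_{p}(\IDT,X)$), it is \ws-dense. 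By the Krein--\v{S}mulian theorem it then suffices to show that $B_{\mathcal{H}_{p}(X^{\ast})}$ is \ws-closed in $\mathcal{H}^{+}_{p}(X^{\ast})$, since a \ws-dense \ws-closed subspace is everything. Now $B_{\mathcal{H}_{p}(X^{\ast})}=B_{Y^{\ast}}$ is $\sigma(Y^{\ast},Y)$-compact, while the \ws-topology of $\mathcal{H}^{+}_{p}(X^{\ast})$, restricted to $\mathcal{H}_{p}(X^{\ast})$, is generated by the Dirichlet-coefficient functionals $D\mapsto\langle a_{n}(D),x\rangle$ ($n\in\N$, $x\in X$), because the $\omega^{-\alpha}\otimes x$ span a dense subspace of $E_{p}(\IDT,X)$. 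Hence it is enough to show that every such coefficient functional belongs to the predual $Y$, i.e.\ is $\sigma(Y^{\ast},Y)$-continuous: then the \ws-topology of $\mathcal{H}^{+}_{p}(X^{\ast})$ is coarser than $\sigma(Y^{\ast},Y)$ on $\mathcal{H}_{p}(X^{\ast})$, so $B_{\mathcal{H}_{p}(X^{\ast})}$ remains compact, hence closed, in $\mathcal{H}^{+}_{p}(X^{\ast})$. For $1\le p<\infty$ I would get this from the strongly continuous group of isometries $D\mapsto D^{\theta}$ of $\mathcal{H}_{p}(X^{\ast})=Y^{\ast}$ furnished by Lemma~\ref{Lemm:RotationCoefficients}: averaging it against the characters $\theta\mapsto\theta^{\alpha}$ gives norm-one projections onto the spectral subspaces $\{x^{\ast}(\primes^{\alpha})^{-s}:x^{\ast}\in X^{\ast}\}\cong X^{\ast}$; one checks these projections are \ws-continuous (being adjoints of the corresponding spectral projections of a predual), and post-composing with the $\sigma(X^{\ast},X)$-continuous evaluations $x^{\ast}\mapsto\langle x^{\ast},x\rangle$ displays the coefficient functionals as \ws-continuous. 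The exponent $p=\infty$ is handled by the same idea applied to the $1$-complemented copy of $H_{\infty}(\mathbb{T},X^{\ast})$ inside $\mathcal{H}_{\infty}(X^{\ast})$ (conditional expectation onto the first coordinate), together with the classical one-variable characterization of the ARNP of $X^{\ast}$ through duality of $H_{\infty}(\mathbb{T},X^{\ast})$ (cf.\ Bukhvalov--Danilevich, Dowling).

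The step I expect to be the main obstacle is exactly this last verification: that the abstract predual $Y$ produced by hypothesis (ii) is \emph{compatible} with the canonical predual $E_{p}(\IDT,X)/N$ of $\mathcal{H}^{+}_{p}(X^{\ast})$, equivalently that the Dirichlet coefficients of $D\in Y^{\ast}$ depend \ws-continuously on $D$. A genuine rigidity argument is unavoidable here --- through the $\IDT$-action as above, or through the reduction to the one-dimensional torus, where the same point recurs --- because ``$V$ is a dual space and \ws-dense in $W^{\ast}$'' does not by itself force $V=W^{\ast}$, as the Goldstine embedding $\ell_{\infty}\hookrightarrow C(\beta\N)^{\ast\ast}$ shows. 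Everything else is routine bookkeeping with Proposition~\ref{Finish}, Theorem~\ref{Theo:ARNPimpliesOnto} and Krein--\v{S}mulian.
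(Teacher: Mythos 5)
Your overall architecture is the paper's: the equivalence (i)$\Leftrightarrow$(iii) is exactly Proposition~\ref{Finish} combined with Theorem~\ref{Theo:ARNPimpliesOnto}, and (iii)$\Rightarrow$(ii) is the same remark that $E_{p}(\IDT,X)^{\ast}_{+}$ is weak$^\ast$-closed in a dual. For (ii)$\Rightarrow$(i) the paper is more direct than your Krein--\v{S}mulian route: given $D\in\mathcal{H}^{+}_{p}(X^{\ast})$, the translates $(D_{\varepsilon})_{\varepsilon>0}$ form a bounded family in $\mathcal{H}_{p}(X^{\ast})=Y^{\ast}$, Alaoglu yields a weak$^\ast$-cluster point $\tilde D\in Y^{\ast}$ as $\varepsilon\to0^{+}$, and one identifies $\tilde D=D$ coefficientwise, so that $D\in\mathcal{H}_{p}(X^{\ast})$; no weak$^\ast$-density argument and no identification of $\mathcal{H}^{+}_{p}(X^{\ast})$ with $(E_{p}(\IDT,X)/N)^{\ast}$ are needed. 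Your variant is workable, but both arguments reduce to exactly the same key claim: for every $N$ and $x\in X$ the coefficient functional $D\mapsto\langle a_{N}(D),x\rangle$ is $\sigma(Y^{\ast},Y)$-continuous, i.e.\ lies in the canonical image of the \emph{abstract} predual $Y$.

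This is where your plan has a genuine gap. The justification you sketch --- average the rotation action of Lemma~\ref{Lemm:RotationCoefficients} against characters to get norm-one spectral projections, which are weak$^\ast$-continuous ``being adjoints of the corresponding spectral projections of a predual'' --- is circular: the only predual available is the abstract $Y$, and whether the rotations (hence the averaged projections, and indeed the weak$^\ast$-integrals defining them) are adjoint operators with respect to $Y$ is precisely the compatibility you are trying to prove; knowing they are weak$^\ast$-continuous for the canonical predual of $\mathcal{H}^{+}_{p}$ coming from Theorem~\ref{Theo:DirichletOperators} says nothing about $\sigma(Y^{\ast},Y)$. The $p=\infty$ reduction has the same defect, plus an extra one: a $1$-complemented subspace of a dual space need not be a dual space unless the projection is weak$^\ast$-continuous (e.g.\ $L_{1}[0,1]$ is $1$-complemented in its bidual but is not isomorphic to a dual), so passing to the copy of $H_{\infty}(\mathbb{T},X^{\ast})$ does not come for free. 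You correctly diagnose this compatibility of $Y$ with the coefficient functionals as the crux --- and it is worth noting that at this very point the paper simply records as an observation that the functional $\sum_{n}a_{n}^{\ast}n^{-s}\mapsto\langle a_{N}^{\ast},x\rangle$, viewed in $Y^{\ast\ast}$, is weak$^\ast$-continuous and hence belongs to $Y$, and then computes $\langle\tilde a^{\ast}_{N},x\rangle=\lim_{\varepsilon\to0^{+}}N^{-\varepsilon}\langle a^{\ast}_{N},x\rangle=\langle a^{\ast}_{N},x\rangle$, offering no further argument --- so your proposal reaches the paper's reduction but does not actually supply the missing continuity, and the extra machinery (Krein--\v{S}mulian, group averaging, the one-variable detour) does not close it.
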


\begin{proof}
The equivalence (i) $\Leftrightarrow$ (iii) is an immediate consequence of Proposition \ref{Finish} and Theorem \ref{Theo:ARNPimpliesOnto}.
(iii) $\Rightarrow$ (ii): $E_{p}(\IDT, X)^{\ast}_{+}$ is a dual, as it is a weak$\ast$-closed subspace of a dual by its very definition. (ii) $\Rightarrow$ (i):
Let us fix $1 \leq p \leq \infty$ such that $\mathcal{H}_{p}(X^{\ast})$ is (isometrically) isomorphic to the dual $Y^\ast$ of a Banach space $Y$. Our aim is to show that $\mathcal{H}_{p}^{+}(X^{\ast}) = \mathcal{H}_{p}(X^{\ast})$. Given $D=\sum_{n}{a_{n}^{\ast}n^{-s}} \in \mathcal{H}_{p}^{+}(X^{\ast})$, we have by definition that $(D_{\varepsilon})_{\varepsilon > 0}$ is a bounded family in $Y^{\ast}=\mathcal{H}_{p}(X^{\ast})$, so we can
(by Alouglu's Theorem) take a weak$^\ast$-cluster point $\tilde{D} = \sum_{n}{\tilde{a}^{\ast}_{n} n^{-s}} \in Y^{\ast}$ of the net $(D_{\varepsilon})_{\varepsilon > 0}$ when $\varepsilon$ goes to zero. We check now that $D = \tilde{D}$, or equivalently, that $a^{\ast}_{N} = \tilde{a}^{\ast}_{N}$ for each $N$. This is a consequence of the following observation: for each $x \in X$ and $N $ the functional $\sum_{n}{a_{n}^{\ast}n^{-s}} \mapsto \langle a_{N}^{\ast},x \rangle$ is an element in $\mathcal{H}_{p}(X^{\ast})^{\ast} = Y^{\ast \ast}$ which is weak$^\ast$-continuous, so it belongs to $Y$. In particular, this yields that
\[ \langle \tilde{a}^{\ast}_{N},x \rangle = \lim_{\varepsilon \rightarrow 0^{+}}{\langle a^{\ast}_{N}N^{-\varepsilon},x \rangle} = \lim_{\varepsilon \rightarrow 0^{+}}{N^{-\varepsilon} \langle a_{N}^{\ast},x \rangle} = \langle a_{N}^{\ast},x \rangle \]
for every $x \in X$ and $N$, finishing the proof.
\end{proof}

We finish with a Hilbert criterion for $H_{p}(\IDT,X)$ (see also Corollary \ref{Coro:HilbertCriterion}). This result  is  an immediate consequence of  Corollary \ref{Coro:HilbertCriterion} combined with the Theorems \ref{Theo:ARNPimpliesOnto} , \ref{Theo:Holomorphicc0} and \ref{Theo:Holomorphicl2}.

\begin{Prop}
Let $X$ have  the ARNP and $1 \leq p \leq \infty$. Then for any family $(c_{\alpha})_{\alpha}$ in $X$ the following assertions are equivalent:
\begin{enumerate}
\item[(i)] There is $f \in H_{p}(\IDT,X)$ satisfying $\hat{f}(\alpha) = c_{\alpha}$ for each $\alpha \in \N_{0}^{(\N)}$.
    \vspace{2mm}
\item[(ii)] There exists a sequence $f_{m} \in H_{p}(\mathbb{T}^{m},X)$ such that
\begin{enumerate}
\item[(ii.1)] $\hat{f}_{m}(\alpha) = c_{\alpha}$ for each $\alpha \in \N_{0}^{m}$,
\vspace{1mm}
\item[(ii.2)] $\sup_{m \in \N}{\| f_{m}\|_{H_{p}(\mathbb{T}^m,X)}} < + \infty$.
\end{enumerate}
\end{enumerate}
\vspace{1mm}
Moreover, in that case $\| f\|_{H_{p}(\IDT, X)} = \sup_{m \in \N}{\| f_{m}\|_{H_{p}(\mathbb{T}^m,X)}}$.
\end{Prop}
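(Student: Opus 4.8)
The plan is to transport the holomorphic Hilbert criterion of Corollary~\ref{Coro:HilbertCriterion} along the isometries supplied by the Bohr transform under the ARNP hypothesis. First I would assemble the chain of identifications available when $X$ has the ARNP. By Theorem~\ref{Theo:ARNPimpliesOnto} one has $\mathcal{H}_{p}(X) = \mathcal{H}^{+}_{p}(X)$ isometrically for every $1 \leq p \leq \infty$; combining this with Theorem~\ref{Theo:Holomorphicc0} when $p = \infty$ (so $\mathcal{S}_{p} = c_{0}$) and with Theorem~\ref{Theo:Holomorphicl2} when $1 \leq p < \infty$ (so $\mathcal{S}_{p} = \ell_{2}$), the Bohr transform yields the isometric chain
\[
H_{p}(\IDT, X) \equiv \mathcal{H}_{p}(X) \equiv \mathcal{H}^{+}_{p}(X) \equiv H_{p}(B_{c_{0}} \cap \mathcal{S}_{p}, X).
\]
Tracing through the definitions of these maps, a function $f \in H_{p}(\IDT, X)$ corresponds to the $F \in H_{p}(B_{c_{0}} \cap \mathcal{S}_{p}, X)$ whose monomial coefficients satisfy $c_{\alpha}(F) = \widehat{f}(\alpha)$ for all $\alpha \in \N_{0}^{(\N)}$. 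In particular, assertion (i) of the proposition holds for the family $(c_{\alpha})_{\alpha}$ exactly when assertion (i) of Corollary~\ref{Coro:HilbertCriterion} holds, and in that case $\|f\|_{H_{p}(\IDT,X)} = \|F\|_{H_{p}(B_{c_{0}} \cap \mathcal{S}_{p},X)}$.

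Next I would match the finite-dimensional data. For each fixed $m$ the $m$-dimensional Poisson kernel embeds $H_{p}(\mathbb{T}^{m}, X)$ isometrically into $H_{p}(\mathbb{D}^{m}, X)$; conversely, since $X$ has the ARNP --- and therefore, for $1 \leq p < \infty$, so does $L_{p}(\mathbb{T}^{m}, X)$ by Dowling's theorem \cite{DowlingARNPLebesgue}, while for $p = \infty$ one reduces to the one-variable case as in Observation~\ref{Obse:StoltzRegions} --- every $F \in H_{p}(\mathbb{D}^{m}, X)$ has radial boundary values almost everywhere, defining an element of $H_{p}(\mathbb{T}^{m}, X)$ with the same norm. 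Hence $H_{p}(\mathbb{T}^{m}, X) \equiv H_{p}(\mathbb{D}^{m}, X)$, and under this identification $\widehat{f}_{m}(\alpha)$ agrees with the monomial coefficient of the holomorphic representative for every $\alpha \in \N_{0}^{m}$. Therefore assertion (ii) of the proposition, with its sequence $(f_{m})$ in $H_{p}(\mathbb{T}^{m},X)$ and the bound $\sup_{m}\|f_{m}\|_{H_{p}(\mathbb{T}^m,X)} < \infty$, is precisely assertion (ii) of Corollary~\ref{Coro:HilbertCriterion} read through these isometries (term by term in $m$).

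With both translations in place the proposition follows: the equivalence (i) $\Leftrightarrow$ (ii) is that of Corollary~\ref{Coro:HilbertCriterion}, and the norm identity of the ``moreover'' part is obtained by concatenating
\[
\|f\|_{H_{p}(\IDT,X)} = \|F\|_{H_{p}(B_{c_{0}} \cap \mathcal{S}_{p},X)} = \sup_{m}\|f_{m}\|_{H_{p}(\mathbb{D}^m,X)} = \sup_{m}\|f_{m}\|_{H_{p}(\mathbb{T}^m,X)}.
\]
I expect the only step needing more than bookkeeping to be the finite-dimensional identification $H_{p}(\mathbb{T}^{m},X) \equiv H_{p}(\mathbb{D}^{m},X)$ under the ARNP; although standard, it rests on the same boundary-value arguments (Dowling's result, Observation~\ref{Obse:StoltzRegions}) already used for the infinite-dimensional torus, and should be spelled out, perhaps by induction on $m$ using that the ARNP passes to the subspaces $H_{p}(\mathbb{T}^{m-1},X) \subset L_{p}(\mathbb{T}^{m-1},X)$.
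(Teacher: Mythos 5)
Your global strategy is the paper's: transport Corollary \ref{Coro:HilbertCriterion} through the coefficient-preserving isometries $H_{p}(\IDT,X)\equiv\mathcal{H}_{p}(X)\equiv\mathcal{H}^{+}_{p}(X)\equiv H_{p}(B_{c_0}\cap\mathcal{S}_{p},X)$ supplied by Theorem \ref{Theo:ARNPimpliesOnto} and Theorems \ref{Theo:Holomorphicc0}, \ref{Theo:Holomorphicl2}. The weak point is the step where you declare condition (ii) of the proposition to be ``precisely'' condition (ii) of the corollary via an isometric identification $H_{p}(\mathbb{T}^{m},X)\equiv H_{p}(\mathbb{D}^{m},X)$ under the ARNP. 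That identification is a vector-valued Fatou theorem on the finite polydisc: it is nowhere proved in the paper, and your sketch does not establish it. Dowling's theorem \cite{DowlingARNPLebesgue} covers only $1\le p<\infty$, while for $p=\infty$ the ambient space $L_{\infty}(\mathbb{T}^{m-1},X)$ contains $c_{0}$ and therefore fails the ARNP, so inheritance by the subspace $H_{\infty}(\mathbb{T}^{m-1},X)\subset L_{\infty}(\mathbb{T}^{m-1},X)$ gives nothing; and Observation \ref{Obse:StoltzRegions} is a one-variable statement which by itself produces no boundary values on $\mathbb{T}^{m}$. (The identification is in fact true under the ARNP --- it can be deduced from the displayed chain applied to functions of the first $m$ variables --- but as written this is a genuine gap, and you invoke exactly its hard direction when you pass from the restrictions $F_{m}\in H_{p}(\mathbb{D}^{m},X)$ back to elements of $H_{p}(\mathbb{T}^{m},X)$ in (i)$\Rightarrow$(ii).)

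The gap is avoidable, because only the elementary direction is ever needed. For (ii)$\Rightarrow$(i), the Poisson kernel sends each $f_{m}\in H_{p}(\mathbb{T}^{m},X)$ to a function in $H_{p}(\mathbb{D}^{m},X)$ with the same coefficients and the same norm (as in \eqref{equa:PoissonRepresentation}, using \eqref{four}; no ARNP required); then Corollary \ref{Coro:HilbertCriterion} produces $F\in H_{p}(B_{c_0}\cap\mathcal{S}_{p},X)$, and the chain together with Theorem \ref{Theo:ARNPimpliesOnto} --- the only place where the ARNP of $X$ enters --- yields $f\in H_{p}(\IDT,X)$ with $\hat{f}(\alpha)=c_{\alpha}$ and $\|f\|_{H_{p}(\IDT,X)}=\sup_{m}\|f_{m}\|_{H_{p}(\mathbb{T}^{m},X)}$. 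For (i)$\Rightarrow$(ii), do not descend from $\mathbb{D}^{m}$ at all: set $f_{m}(\omega)=\int_{\IDT}f(\omega,\tilde{\omega})\,d\tilde{\omega}$ as in Lemma \ref{immer schon}; then $f_{m}\in H_{p}(\mathbb{T}^{m},X)$, $\hat{f}_{m}(\alpha)=c_{\alpha}$ for $\alpha\in\N_{0}^{m}$, and $\|f_{m}\|\le\|f\|$. Since this $f_{m}$ has the same coefficients as the restriction $F_{m}$ of the associated $F$, its Poisson integral equals $F_{m}$, hence $\|f_{m}\|_{H_{p}(\mathbb{T}^{m},X)}=\|F_{m}\|_{H_{p}(\mathbb{D}^{m},X)}$, and the ``moreover'' equality follows from $\|f\|=\|F\|=\sup_{m}\|F_{m}\|$ given by Corollary \ref{Coro:HilbertCriterion} and the isometric chain.
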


\begin{Rema}
We strongly believe that the previous property characterizes the ARNP for $X$.
\end{Rema}

\bigskip

\section*{Acknowledgements}
We are very thankful to Prof. Oscar Blasco who pointed out to us the existence of \cite{BlascoPositivepSumming} and \cite{BlascoBoundaryValues}, which inspired part of this work.

\end{document}